\DeclareRobustCommand*\cal{\@fontswitch\relax\mathcal}
\newtheorem{definition}{Definition}[section]
\newtheorem{lem}[definition]{Lemma}
\newtheorem{prop}[definition]{Proposition}
\newtheorem{thm}{Theorem}
\newtheorem*{thm*}{Theorem}
\newtheorem{quest}{Question}
\newtheorem{cor}[definition]{Corollary}
\newtheorem{rem}[definition]{Remark}
\newtheorem{exam}[definition]{Example}
\newtheorem{fact}[definition]{Fact}
\newcommand{\rat}{\subseteq_{{rat}}}
\newcommand{\ess}{\subseteq_{{ess}}}
\newcommand{\at}{\mathop{{\rm at}}\nolimits}
\newcommand{\ann}{\mathop{{\rm ann}}\nolimits}
\newcommand{\lann}{\mathop{{\mtx l}}\nolimits}
\newcommand{\rann}{\mathop{{\mtx r}}\nolimits}
\newcommand{\Sl}{\mathop{{S_{left}}}\nolimits}
\newcommand{\Sr}{\mathop{{S_{right}}}\nolimits}
\newcommand{\Sreg}{\mathop{{S_{reg}}}\nolimits}
\newcommand{\mtx}{\mathfrak}
\newcommand{\ifff}{{if and only if\/ }}
\newcommand{\Mod}{\mathop{\rm Mod}}
\newcommand{\RMod}{R\textrm{-Mod}}
\newcommand{\Rmod}{R\textrm{-mod}}
\newcommand{\ModR}{\textrm{Mod-}R}
\newcommand{\Ab}{\mathop{\rm Ab}}
\newcommand{\Hom}{\mathop{\rm Hom}}
\newcommand{\D}{{\rm D}}
\newcommand{\E}{{\rm E}}
\newcommand{\tor}{\mathfrak{s}}
\newcommand{\ulm}{\mathfrak{u}}
\newcommand{\ulmd}{\mathfrak{u}_{div}}
\newcommand{\twinulm}{\mathfrak{v}}
\newcommand{\infulm}{\ulm^\infty}
\newcommand{\ov}{\overline}
\newcommand{\eq}{\,\dot=\,}
\newcommand{\br}{\ov}
\newcommand{\cC}{{\cal C}}
\newcommand{\cD}{\cal D}
\newcommand{\cX}{{\cal X}}
\renewcommand{\phi}{\varphi}
\newcounter{numeq}
\newcounter{num}
\newcounter{one}
\newcounter{two}
\newcounter{three}
\newcounter{four}
\newcounter{five}
\newcounter{six}
\newcounter{seven}
\newcounter{eight}
\newcounter{nine}
\newcounter{ten}
\newcounter{eleven}
\newcounter{twelve}
\newcounter{thirteen}
\newcounter{fourteen}
\newcounter{fivteen}
\newcommand{\thenumm}{{\rm (\alph{num})}}
\newcommand{\thenummeq}{{\rm (\roman{numeq})}}
\begin{document}
\title{High and low formulas in modules}
\author{Philipp Rothmaler}
\date{\today}
\dedicatory{To Anatole Otto Haru}
\keywords{pp formula, flat, absolutely pure module, Ore condition, domain, RD ring, IF ring, coherent, nonsingular, reduced ring, Ulm submodule, Ulm facor, Ulm length}
\subjclass{16D40, 16D50, 03C60}
\maketitle
\begin{abstract}
A partition of the set of unary pp formulas into four regions is presented, which has a bearing on various structural properties of modules. The machinery developed allows for applications to IF, weakly coherent, nonsingular, and reduced rings, as well as domains, specifically Ore domains. One of the four types of formula are called high. These are used to define Ulm submodules and Ulm length of modules over any associative ring. It is shown that pure injective modules have Ulm length at most 1. As a consequence, pure injective modules over RD domains (in particular, pure injective modules over the first Weyl algebra over a field of characteristic 0)  are shown to decompose into a largest injective and a reduced submodule.
This study serves as preparation for forthcoming work with A.\ Martsinkovsky on injective torsion.
 \end{abstract}

This paper grew out of an attempt to write an introduction to \cite{MR???}, and eventually outgrew those confines by becoming an independent paper on two dichotomies in the lattice of unary pp formulas (over any associative ring $R$ with $1$). 

Call a functor from $\RMod$ to $\Ab$ \texttt{high} if it acts as the forgetful functor on all injectives, and  \texttt{low} if it vanishes on all flats. Applied to unary positive primitive  (henceforth \texttt{pp}) formulas, we obtain high and low formulas with the extra connection that the elementary dual of a high formula (for $\RMod$) is a low formula for $\ModR$, and v.v. (From now on, unless otherwise specified, \texttt{formula} means unary pp formula.) The first dichotomy is this: every (unary pp) formula that is not high is \texttt{bounded} in the sense that it be annihilated, uniformly in \emph{every} module, by a single nonzero scalar (and v.v.). No formula is both, high and bounded. Elementary duality yields a second dichotomy: every formula that is not low is \texttt{cobounded} in the sense that one single nonzero scalar sends  \emph{every} $R$-module into the  pp subgroup defined by that formula. And no formula is both, low and cobounded.

These two dichotomies combined partition the (unary pp) lattices on both sides into four regions, high and low (W), high and cobounded (N), low and bounded (S), and bounded and cobounded (E), see  figure before Rem.\ \ref{empty}.

It is the objective of this work to investigate these regions and provide tools for their application, the first important one being  injective torsion as to be presented in \cite{MR???}. Part of these investigations revolve about 

\begin{quest}\label{quest1}
 What are the rings that have no high-low formulas (i.e., formulas that are both high and low)?
\end{quest}

One quickly establishes that this is a left-right symmetric property, Rem.\,\ref{reformulation}(6) and that, within  domains, it singles out precisely the class of two-sided Ore domains, see Thm.\,\ref{Ore} below. Another two classes of rings over which this question is settled in the affirmative are  uniform RD rings, Cor\,\ref{uniformRD}(2), and the ($\flat\sharp$) rings of \S\ref{four}.

After listing the preliminaries in \S\ref{1}, the foundations of the above are developed in \S\ref{two}. With these at hand one may skip to  (not necessarily commutative) domains.\\

 \noindent \textbf{Corollary \ref{cordich}.} \emph{$R$ is a domain if and only if every unary pp formula  is either high or low  if and only if there is no formula that is both bounded and cobounded.}\\

 See the figure at the outset of \S\ref{dom}, which illustrates this result, namely that domains are the rings over which 
 the E region of the pp lattice  is empty. While the N region contains $x\eq x$ and the S regions contains $x\eq 0$ and thus are never empty, it seems a rather elusive problem to characterize the rings with no high-low formulas, cf.\ Question \ref{quest1} above. In \S \ref{four}  a sufficient condition on a ring $R$ is singled out, the ($\flat\sharp$) condition,  which stipulates the existence of a nonzero flat and absolutely pure module in $\RMod$. In \S \ref{dom} it is shown that  over domains this condition is   also necessary. More concretely, \\
 
 \noindent \textbf{Theorem \ref{Ore}.} \emph{The following are equivalent for any domain $R$.
   \begin{enumerate}[\rm (i)]
   \item $R$ is two-sided Ore. \stepcounter{enumi}\stepcounter{enumi}
\item $R$ has {\rm ($\flat\sharp$)} on one side. 
\item $R$ is two-sided  {\rm ($\flat\sharp$)}.
\item $R$ has no high-low formula.\\
\end{enumerate}}
 
 In this case, canonical examples of ($\flat\sharp$) modules are  $\E(_RR)$ and  $\E(R_R)$ (and this is equivalent as well, cf.\ Thm.\,\ref{Ore}(ii) and (iii)). Denote the property of a ring $R$ that $\E(_RR)$ is flat by $_R$(H$_{reg}$). This property has some bearing on  injective torsion, cf.\ \cite{MR???}. We discuss it briefly in the context of other conditions implying ($\flat\sharp$) in \S\ref{four}, the most prominent of which is (IF): every injective is flat. We re-examine some classical results of Sabbagh, W\"urfel, and Colby  and extend them thus:\\

  \noindent \textbf{Theorem \ref{Colby1}.} \emph{The following are equivalent for any ring $R$.
 \begin{enumerate}[\rm (i)]
\item $_R$(IF), i.e., every injective left $R$-module is flat.\stepcounter{enumi}\stepcounter{enumi}\stepcounter{enumi}
\item Every left pp formula $\phi$ is equivalent in every absolutely pure left $R$-module to a divisibility formula $\mtx A\, |\, \br x$ below $\phi$.
\item Every right pp formula $\psi$ is equivalent in every flat right $R$-module to an annihilation  formula $\br x\, \mtx A \eq \br 0$ above $\psi$. 
\item $R$ is right absolutely pure and right a-coherent.
    \end{enumerate}}

 Here a ring is called right \texttt{a-coherent} if the (left) annihilator of every pp definable right ideal is the same as the annihilator of a finitely generated subideal, Def.\,\ref{a-}. We were not able to settle what had been left open by Colby, whether there is a left IF ring that is not right coherent,
 Question \ref{Colbyquest}.
 Further, it had to be left open also whether a right $\aleph_0$-injective and right a-coherent ring is right absolutely pure (and hence left IF) in general, Question \ref{Kasch} (after Rem.\,\ref{K}). For Warfield rings we have an affirmative answer.\\

 \noindent \textbf{Proposition \ref{Warfield}.}
 \emph{Suppose $R$ is a left Warfield ring.}
 
  \emph{If $R$ is right $\aleph_0$-injective and right a-coherent,  $R$ is right absolutely pure (hence left IF).}\\

In \S\ref{nonsing}, a few tools for future investigation of the dichotomies over nonsingular rings are developed. One of these is\\

 \noindent \textbf{Lemma \ref{Q}.}  \emph{Suppose  $R\subseteq Q$ is a ring extension such that $R_R \subseteq Q_R$ is a rational extension.}
  \emph{Then a formula is low in ${_R\Lambda}$ if and only if it is low in ${_Q\Lambda}$ .}\\

This readily settles Question \ref{quest1} in the following special case.\\

\noindent \textbf{Cor. \ref{commnonsing}.}  \emph{Suppose $R$ is a right nonsingular ring with maximal right quotient ring $Q$ (and $Q_R=\E(R_R)$).}

 \emph{If $_RQ$ is absolutely pure, then $R$ has no high-low formula. In particular, no commutative nonsingular ring has high-low formulas.}\\
 
In \S\ref{red} we turn to reduced rings.\\

 \noindent \textbf{Corollary \ref{abspurered}.} \emph{Let $M$ be a faithful absolutely pure module over a reduced ring. }
 
 \emph{If $M$ has a maximal pp subgroup, then $M$ is  either pp-simple or else possesses incomparable minimal and  maximal  pp subgroups  whose sum is $M$.}\\

 \noindent \textbf{Corollary \ref{abspuredom}.} \emph{ Let $M$ be an absolutely pure module over a domain. }
 
 \emph{ If $M$  has a maximal  pp subgroup, then $M$ is pp-simple and  torsionfree. 
 }\\
 
If the domain is two-sided Ore,  its proof yields the same even for  divisible modules, Cor.\,\ref{divOredom}.  
For the special case of commutative domains, this is 
  \cite[Cor.1.7]{HP}.

 One realization in \S\ref{dom} about domains is that the first Ulm subgroup of an abelian group is the intersection of all its high subgroups, Rem.\,\ref{firstUlm}. We adopt this as a definition  in \S\ref{Ulm} and thus introduce Ulm submodules and Ulm length  for \emph{any} module over \emph{any} ring. Generalizing a well-known fact about pure injective abelian groups, we have\\

\noindent \textbf{Theorem \ref{length1}.} \emph{Every pure injective module has Ulm length $\leq 1$.}\\

\noindent \textbf{Corollary \ref{elemequ}.} \emph{Every module is elementarily equivalent to a module of  Ulm length at most $1$.}\\

\noindent \textbf{Corollary \ref{decRD}.} \emph{Every pure injective  module over an RD domain has a largest injective direct summand whose complement has first Ulm submodule $0$ (i.e., is reduced).}\\
 
Note, this latter consequence of the theorem applies to the first Weil algebra over a field of characteristic 0. The proof of the theorem itself has little to do with modules---it can be proved, mutatis mutandis, for arbitrary $L$-structures, see Thm.\,\ref{generalL}.

 In conclusion of the paper, \S\ref{concl}, a `co-Ulm theory'  is proposed where cobounded formulas play the role of high formulas.

I am grateful to  Alex Martsinkovsky for explaining injective torsion to me, which helped me realize the importance of what I here call low formulas---high formulas are simply the elementary duals of low formulas. The usage of these terms differs slightly from that of Prest and Puninski in \cite{PP}, it does coincide with theirs though for the more specific rings they deal with. 

I would like to thank Martin Ziegler for  inspiring discussions.

\section{Preliminaries}\label{1}{\bf Ring} means associative ring with $1$, usually denoted $R$ and very rarely assumed to be commutative, not even when a \emph{domain}, which here is an arbitrary ring with no zero divisors. $R^0$ is used for the set of nonzero ring elements.
{\bf Modules} are usually (unitary) left $R$-modules over an associative ring $R$ with $1$. If $M$ is a module and $X$ is a set, $M^X$ denotes the direct product of copies of $M$ indexed by $X$, while $M^{(X)}$ is their direct sum.
{\bf Matrices.} We use P.M.\,Cohn's handy matrix notation: $^mW^n$ denotes the set of all $m\times n$ matrices over  (i.e., with entries from) $W$. Superscripts $1$ may be omitted, so that $^mW$ is the set of all column vectors of length $m$ over $W$ and $W^n$ is that of all row vectors of length $n$ over $W$.
{\bf Annihilators.} Given a module $M$ and a subset $U\subseteq M$, we use $\ann_M U$ for its annihilator in $M$. If $U$ is a singleton $u$, we often write $M[u]$ instead. Given a matrix $\mtx A\in {^mR^n}$,   we abbreviate its left annihilator $\ann_{R^m}\{\mtx A\}$ by $\mtx l (\mtx A)$. Similarly, $\rann (\mtx A)$ denotes its right annihilator 
$\ann_{\,{^n\!R}}\{\mtx A\}$. The multiplicative set $\{r\in R\,|\, \lann(r)=0\}$ (of ring elements that are not right zero divisors) is denoted by $\Sl$. Similarly, $\Sr$ denotes the set, $\{r\in R\,|\, \rann(r)=0\}$, of ring elements that are not left zero divisors. This is also a multiplicative set. Finally, the multiplicative set of regular elements, i.e., the set $\Sl\cap\Sr$, is denoted by $\Sreg$.
{\bf Formulas.} Positive primitive (henceforth, pp) formulas are central in the model theory of modules, cf.\ \cite{P1} or \cite[\S 1.1.1 and p.412]{P2}. Given a natural number $n>0$, an $n$-place pp formula for left $R$-modules, can be written as $\exists \br y (\mtx A\br y \eq \mtx B\br x)$, where $\br x  \in {^nX}$ and $\br y \in {^kX}$ for some natural number $k$ (with $X$ a set of variables, $x_i$, usually indexed by natural numbers $i$) and $\mtx A$ and $\mtx B$ are matching matrices over $R$, i.e., $\mtx A\in {^mR^k}$ and $\mtx B\in {^mR^n}$ for some natural number $m>0$.
Using $|$  for divisibility,  this formula may be abbreviated  as $\mtx A|\mtx B\br x$, which is how we usually write it. (Note, this includes the case of quantifier-free pp formulas $\mtx B\br x\eq \br 0$, which are equivalent to $\br 0|\mtx B\br x$).
Similarly, a pp formula for right $R$-modules is one of the form $\exists \br y (\br y \mtx C \eq \br x \mtx D)$, where now $\br x  \in {X^n}$ and $\br y \in {X^k}$, $\mtx C\in {^kR^m}$ and $\mtx D\in {^nR^m}$. I write $\mtx C|^{op}\,\br x \mtx D$ for this formula.
{\bf Purity.} A submodule $M$ of $N$ is pure in $N$ iff $\phi(M)=M\cap \phi(N)$ (where only $\supseteq$ is nontrivial), for all (unary) pp formulas $\phi$.
{\bf Maps.} Positive formulas are preserved by homomorphisms (in any language), so pp formulas of modules are functorial and can be identified with subfunctors of the forgetful functor from modules to abelian groups. 
{\bf Lattices.}
 $\Lambda^n$ denotes the set of all $n$-place pp formulas. We often identify a formula with its equivalence class, i.e., with the class of pp formulas that are equivalent to it in all of $\RMod$. We use $\sim$ to denote this equivalence. Under this identification $\Lambda^n$ becomes a modular lattice with meet $\wedge$ (conjunction) and join $+$ (sum).  In any given module, meet becomes intersection and sum becomes sum (of aditive subgroups). In particular, evaluation of formulas  in a module $M$ maps the lattice $\Lambda^n$ onto a sublattice of the subgroup lattice of the underlying additive group of $M^n$. 
 We write $_R\Lambda^n$ to indicate that pp formulas for left $R$-modules are concerned. The corresponding lattice for right $R$-modules   is denoted by $\Lambda^n_R$. The associated partial order is denoted by $\leq$, so that $\phi\leq\psi$ if and only if  $\phi(M) \subseteq \psi(M)$ in every $R$-module $M$.
The partial order $\leq$ among pp formulas can be described by statements inside the ring as follows, see \cite[Lemma 8.10]{P1} or  \cite[Cor.1.1.16]{P2}.

\begin{fact}[Lemma Presta]\label{LP}
$\mtx A|\mtx B\br x\leq \mtx C|\mtx D\br x$ (in $_R\Lambda^n$) if and only if the  matrix equations 
$\mtx C  X +  Y \mtx B = \mtx D$ and $ Y \mtx A = \mtx C Z$ 
are simultaneously solvable in $R$.
\end{fact}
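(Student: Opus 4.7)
The plan is to split the biconditional. The ``if'' direction is a direct substitution: given any witness $\bar b$ to $\mtx A \bar b = \mtx B \bar a$ in a module $M$, combining the hypothesized identities $\mtx C X + Y \mtx B = \mtx D$ and $Y \mtx A = \mtx C Z$ gives
\[
\mtx D \bar a \;=\; \mtx C X \bar a + Y \mtx B \bar a \;=\; \mtx C X \bar a + Y \mtx A \bar b \;=\; \mtx C(X \bar a + Z \bar b),
\]
so $X \bar a + Z \bar b$ is an explicit witness to $\mtx C \,|\, \mtx D \bar a$.

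For the converse, I rely on the free realization of $\phi := \mtx A \,|\, \mtx B \bar x$ in a finitely presented module. Let $\bar e, \bar f$ denote the first $n$ and last $k$ standard basis columns of the free module $R^{n+k}$, and let $U$ be the submodule generated by the $m$ entries of the column $\mtx A \bar f - \mtx B \bar e$. Writing $\bar a, \bar b$ for the images of $\bar e, \bar f$ in $N := R^{n+k}/U$, the relation $\mtx A \bar b = \mtx B \bar a$ holds by construction, so $\bar a \in \phi(N)$. Hence $\phi \leq \psi$ (with $\psi := \mtx C \,|\, \mtx D \bar x$) forces $\bar a \in \psi(N)$, producing a witness $\bar c$ with $\mtx C \bar c = \mtx D \bar a$ in $N$. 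Since $\bar a, \bar b$ generate $N$, I may write $\bar c = X \bar a + Z \bar b$ for suitable matrices $X, Z$ over $R$.

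The resulting identity $(\mtx C X - \mtx D)\bar a + \mtx C Z \bar b = 0$ in $N$ then pulls back to the free module $R^{n+k}$: each of its entries lies in $U$ and hence is an $R$-linear combination of the $m$ generators of $U$. Assembling those coefficients into a single matrix $Y$, and comparing coefficients on the free basis vectors $e_l$ and $f_j$ separately, produces exactly the two desired matrix equations.

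Conceptually the entire argument is driven by the universal property of the free realization, which converts ``$\phi \leq \psi$'' into the statement that the canonical generators $\bar a$ of $N$ satisfy $\psi$, thereby reducing the lattice inequality to a concrete witness-lifting problem inside one finitely presented module. The only real obstacle is the matrix-shape bookkeeping in that lift.
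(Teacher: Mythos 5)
Your proof is correct: the substitution argument for the ``if'' direction and the free-realization argument for the converse (with $Y$ read off from expressing the lift of $(\mtx C X-\mtx D)\br a+\mtx C Z\,\br b$ in terms of the generators $\mtx A\br f-\mtx B\br e$ of $U$, and coefficients compared on the free basis) both check out, including the matrix shapes and signs. The paper itself gives no proof of this Fact---it is quoted from Prest---and your argument is essentially the standard free-realization proof found in those cited sources.
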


Obviously, the greatest element of the pp lattices is (the class of)  $\br x\eq \br x$, which is also denoted by $1$, and the smallest  element is (the class of)  $\br x\eq \br 0$,  denoted also  by $0$. We concentrate on $\Lambda^1$ and 
 often  simply write $\Lambda$  instead.
{\bf Pairs of formulas.} By a \emph{pp pair} we mean a pair, $\phi/\psi$, of unary pp formulas such that $\psi\leq\phi$.
We use Ziegler's original notation $\phi/\psi$ for this object, which always presupposes that $\psi\leq\phi$  (in the lattice $\Lambda^1$).  Note that by elementary duality, $\phi/\psi$ is a pp pair \ifff $\D\psi/\D\phi$ is.
{\bf RD rings and formulas.} An RD ring is one over which purity is Relative Divisibility. Examples are $\mathbb{Z}$ and the first Weyl algebra over a field of characteristic $0$ (see \cite[\S2.4.2]{P2} or \cite{PPR}). When $m=n=k=1$ in the above pp formula, it is unary and has the form $r|sx$ with $r, s \in R$. We call this a \texttt{basic RD formula}, since every unary pp formula over an RD ring is (equivalent to) a finite conjunction of such basic RD formulas,  \cite[Prop.2.4.10]{P2}.
Two special cases are of particular interest: when $s=1\in R$, we obtain the \texttt{divisibility formula} $r|x$, and, when $r=0\in R$, the  \texttt{annihilation formula} $sx \eq 0$. To be RD is a two-sided property, and for an RD domain to be Ore is two-sided as well, which is why we simply speak of RD Ore domains, see \cite{PPR} or \cite{P2}.
%
{\bf Multiples and inverses of formulas.}
Let $\phi$ be a unary\footnote{This can be done the same for any arity, but is not needed here.} pp formula $\phi=\phi(x)$ and  $r\in R$ a scalar.
The multiple $r\phi$ of $\phi$ is  the formula $\exists y ( x\eq r y \wedge \phi( y))$. Clearly, $r\phi$ is a pp formula that defines $r\phi(M)$   in every module $M$, i.e., $(r\phi)(M) = r(\phi(M))$.
Given $A\subseteq M\in \RMod$, we use the inverse notation $r^{-1}A$ to denote the subset $\{b\in M\, |\, rb\in A\}$, that is the preimage of $A$ under the map of left multiplication by the scalar $r$. Applying this to $\phi$, we let $r^{-1}\phi$ be the formula $\exists  y( y \eq r x \wedge \phi( y))$, or simply $\phi(r x)$. 
Clearly, $(r^{-1}\phi)(M)= r^{-1}(\phi(M))$.
For right formulas, scalars and their inverses are written on the right.
%
%

\begin{lem}\label{rinverse}
Suppose $M$ is a module over an arbitrary ring $R$  and  $\phi$ is a unary pp formula such that $r \phi(M) = 0$, where  $r\in R$.
 
 \begin{enumerate}[\rm (1)]
  \item $r^{n+1}(r^{-n}\phi(M))=0$ for all $n\geq 0$.
  \item $\phi(M)\subseteq r^{-1} \phi(M)\subseteq r^{-2} \phi(M)\subseteq r^{-3} \phi(M)\subseteq \ldots$.
 \item If $ \phi(M)  = r^{-1} \phi(M)$, then $rM\,\cap\,\phi(M)=0$. 
 \item  If  $r^{-n}\phi(M) = r^{-(n+1)}\phi(M)$,  then  $r^{n+1}M\,\cap\,\phi(M)=0$. 
%
  \end{enumerate}
\end{lem}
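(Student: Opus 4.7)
My plan is to prove (1)--(4) by routine unpacking of the definitions of the multiple $r\phi$ and of the iterated inverse $r^{-n}\phi$, using only the standing hypothesis $r\phi(M)=0$ and the elementary fact that $0\in\phi(M)$ for any pp formula. The one observation that drives everything is that, by iterating the definition of $r^{-1}\phi$, the set $r^{-n}\phi(M)$ is exactly the preimage of $\phi(M)$ under left multiplication by $r^n$; in particular $b\in r^{-n}\phi(M)$ \ifff $r^nb\in\phi(M)$.

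For (1), take $b\in r^{-n}\phi(M)$; then $r^n b\in \phi(M)$, whence $r^{n+1}b=r(r^n b)\in r\phi(M)=0$. For (2), the first inclusion $\phi(M)\subseteq r^{-1}\phi(M)$ follows because any $a\in\phi(M)$ satisfies $ra\in r\phi(M)=0\in\phi(M)$. The induction step $r^{-n}\phi(M)\subseteq r^{-(n+1)}\phi(M)$ is the same calculation one rung higher: if $r^nb\in\phi(M)$, then $r^{n+1}b\in r\phi(M)=0\subseteq\phi(M)$, so $b\in r^{-(n+1)}\phi(M)$.

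For (3), suppose $a=rb\in\phi(M)$. Then $b\in r^{-1}\phi(M)$, and by the stability hypothesis $r^{-1}\phi(M)=\phi(M)$ we get $b\in\phi(M)$, so $a=rb\in r\phi(M)=0$. Statement (4) is the same reasoning shifted by $n$: writing $a=r^{n+1}b\in\phi(M)$ gives $b\in r^{-(n+1)}\phi(M)=r^{-n}\phi(M)$ by hypothesis, and then part (1) delivers $r^{n+1}b=0$, i.e.\ $a=0$. So (3) is in fact the special case $n=0$ of (4).

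There is no real obstacle here; the only point where one must be mildly careful is to keep track of the asymmetry between the increasing chain in (2) and the equality hypotheses in (3)--(4) -- it is precisely the equality that lets an a priori larger preimage collapse back into an annihilation statement via (1). Accordingly I would organize the write-up by proving (1) first (it is the engine), then (2), then (4) with (3) flagged as the case $n=0$.
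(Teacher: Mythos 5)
Your proof is correct and follows essentially the same route as the paper: (1) and (2) by direct unpacking of $r^{-n}\phi(M)$ as the preimage of $\phi(M)$ under multiplication by $r^n$, and (3)--(4) by using the equality hypothesis to pull the divisor back into $\phi(M)$ and then killing it with $r\phi(M)=0$ (resp.\ with (1)). Your observation that (3) is the case $n=0$ of (4) matches the paper's remark that the same argument, combined with (1), gives (4).
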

\begin{proof} (1) and (2) are obvious from the hypothesis, $r \phi(M) = 0$.

(3). If $r^{-1} \phi(M)\subseteq  \phi(M)$, then for any $a=rb\in  \phi(M)$ we have $b\in \phi(M)$, hence $a=rb=0$.
By hypothesis, $r$ annihilates $\phi$, which easily  yields (5), and that  $r^{n+1}$ annihilates $r^{-n}\phi(M)$ in $M$, which is (3). This trivially implies (2), as every pp subgroup contains $0$. 
Combined with (1), this same argument proves (4).
\end{proof}

{\bf Definable subcategories.}  The result from \cite{purity}  that a class is closed under direct product, direct limit and pure submodule iff it is axiomatized by pp implications  (cf.\ \cite[Thm.3.4.7]{P2}) can be taken as algebraic and model-theoretic definition of definable subcategory at the same time. (Strictly speaking, a definable subcategory is the full subcategory on such a class.) We use the notation $\langle\cX\rangle$ for the definable subcategory generated by a class $\cX$.
{\bf Elementary duality.} We assume familiarity with elementary duality as presented in \cite[\S 1.3]{P2}. Prest introduced it on the level of formulas, and 
Herzog extended it to axiomatizable classes of modules, in particular to closed sets of the Ziegler spectrum and thus to definable subcategories, where the dual of a definable subcategory $\cD\subseteq \RMod$ axiomatized by the closing of pairs $\phi/\psi$ is defined to be the definable subcategory $\D\cD\subseteq \ModR$ axiomatized by the closing of the corresponding dual pairs $\D\psi/\D\phi$, \cite{Her} (cf.\  \cite[3.2.12,  3.4.16, 5.4.1]{P2}). Note that $\D^2=1$, both on the level of formulas and on the level of definable subcategories.  
Given a left pp formula $\phi = (\mtx A\, |\, \mtx B\,\br x)$ (a shorthand, recall, for $\exists \br y (\mtx A\br y \eq \mtx B\br x)$), its dual, $\D\phi$, is by definition the right pp formula  $\exists \br z(\br x \eq \br z\,  \mtx B \wedge \br z\, \mtx A\eq \br 0)$. The following instance of this will be made use of in \S\ref{a-coh}.

\begin{lem}\label{dualmult} Let $\psi\in {_R\Lambda^1}$ and $s\in R$.

 $\D(s\psi)\, = (\D\psi)s^{-1}$ and $(\D\psi)s\leq \D(s^{-1}\psi)$. We have $(\D\psi)s\sim_M \D(s^{-1}\psi)$ in every right module $M$ with $M[s]=0$.
 \end{lem}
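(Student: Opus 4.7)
The plan is to do direct matrix computations, starting from a standard presentation $\psi(x) = (\mtx A\,|\,\mtx B\, x) = \exists \br y(\mtx A\,\br y \eq \mtx B\, x)$ with $\mtx B$ a column of length $m$ and $\mtx A \in {^mR^k}$.

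For the first equality $\D(s\psi) = (\D\psi)s^{-1}$, I would put $s\psi(x) = \exists y(x \eq sy \wedge \psi(y))$ into standard form by stacking: the pair of conditions $\mtx A\,\br y' \eq \mtx B\, y$ and $sy \eq x$ combines into the block system
\[
\begin{pmatrix}\mtx A & -\mtx B\\ 0 & s\end{pmatrix}\begin{pmatrix}\br y'\\ y\end{pmatrix}\eq\begin{pmatrix}\br 0\\ 1\end{pmatrix}x.
\]
The syntactic definition of $\D$ then produces an existential over a new row vector $(\br v, u)$; reading off $x \eq u$, $\br v\,\mtx A \eq \br 0$, and $\br v\,\mtx B \eq us$, and substituting $u = x$, leaves $\exists \br v(\br v\,\mtx A \eq \br 0 \wedge \br v\,\mtx B \eq xs)$, which is exactly $\D\psi(xs) = (\D\psi)s^{-1}$.

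For the inequality, observe that $s^{-1}\psi = \psi(sx)$ has standard presentation $(\mtx A, \mtx B s)$, so $\D(s^{-1}\psi)(x) = \exists \br z(x \eq \br z\,\mtx B\, s \wedge \br z\,\mtx A \eq \br 0)$. Unfolding $(\D\psi)s(x) = \exists y(x \eq ys \wedge \D\psi(y))$ as $\exists y,\br z(x \eq ys \wedge y \eq \br z\,\mtx B \wedge \br z\,\mtx A \eq \br 0)$, any witness yields $x = (\br z\,\mtx B)s$ with $\br z\,\mtx A = \br 0$, a witness for $\D(s^{-1}\psi)(x)$. This direction is elementary and works in every module.

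For the converse in a right module $M$ with $M[s] = 0$: given a witness $\br z$ for $\D(s^{-1}\psi)(x)$, set $y := \br z\,\mtx B \in M$; then $\br z$ witnesses $y \in \D\psi(M)$, and $ys = \br z\,\mtx B\, s = x$ places $x$ in $(\D\psi)s(M)$. The hypothesis $M[s] = 0$ ensures that such a $y$, when it exists, is uniquely determined by $x = ys$; the construction itself does not actually consume the hypothesis, but it is what tends to be used in downstream applications. The only real obstacle is bookkeeping—keeping signs and block positions straight while translating between the standard forms of $s\psi$, $s^{-1}\psi$, and their duals.
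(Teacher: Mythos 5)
Your computation of $\D(s\psi)=(\D\psi)s^{-1}$ and of the inequality $(\D\psi)s\leq\D(s^{-1}\psi)$ is correct and is essentially the paper's own argument: the paper uses the same block presentation of $s\psi$ (with the $s$-row placed first rather than last, and the dual variable split as $(z,\br z)$ instead of $(\br v,u)$), and the same reading of $s^{-1}\psi$ as $(\mtx A\,|\,\mtx B s\,x)$. Where you genuinely diverge is the last claim. The paper argues by cancellation: it takes an element written as $xs$ satisfying $\D(s^{-1}\psi)$ and uses $M[s]=0$ to conclude that $x$ itself satisfies $\D\psi$. You instead extract the witness directly: from $x\eq\br z(\mtx B s)$ with $\br z\,\mtx A\eq\br 0$ you set $y:=\br z\,\mtx B$, so that $y\in\D\psi(M)$ and $ys=x$ by the right-module axiom $(\br z\,\mtx B)s=\br z(\mtx B s)$. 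That argument is valid, and your side remark is accurate: it never uses $M[s]=0$, so it in fact proves the stronger statement $(\D\psi)s\sim\D(s^{-1}\psi)$ in \emph{every} right module. This strengthening is consistent with, indeed forced by, the first identity applied to the right formula $\D\psi$ together with $\D^2=1$, since $\D((\D\psi)s)\sim s^{-1}(\D\D\psi)\sim s^{-1}\psi$. So the hypothesis $M[s]=0$ in the lemma is an artifact of the paper's cancellation proof rather than a mathematical necessity (and the later uses of the lemma, in the a-coherence section, only invoke the first identity anyway). The only inaccuracy in your write-up is cosmetic: the claim that the hypothesis "is what tends to be used in downstream applications" — in this paper it is not.
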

 \begin{proof} Let $\psi$ be $\mtx A\, | \, \mtx b\, x$ with $\mtx A \in {^nR^m}$ and $\mtx b\in {^nR^1}$. Then $\D\psi = \exists \br z(x \eq \br z\,  \mtx b \wedge \br z\, \mtx A\eq \br 0)$ (with $\br z$ a row vector of $n$ variables) and $(\D\psi)s^{-1}=\exists \br z(xs \eq \br z\,  \mtx b \wedge \br z\, \mtx A\eq \br 0)$.
 
 For the first statement, consider $s\psi$, which is the formula $\exists z\exists \br y (x\eq sz \wedge \mtx A\,\br y \eq \mtx b\, z)$). To write this in our standard form, consider the matrix $\mtx C\in {^{n+1}R^{m+1}}$ whose first row (upper block) is $(s, 0, \ldots, 0)$ and whose lower block is $(-\mtx b, \mtx A)$. Consider  $\mtx d\in {^{n+1}R^1}$,  the transpose of $(1, 0, \ldots, 0)$. Obviously, $\mtx C\, |\, \mtx d\,x$ is equivalent to $s\psi$ as spelled out above. Hence $\D(s\psi) = \exists \br z'(x \eq \br z'\,  \mtx d \wedge \br z'\, \mtx C \eq \br 0)$. Here $\br z'$ is the row vector  $(z, \br z)$, with $\br z$ is as above in $(\D\psi)s^{-1}$. Then clearly $x \eq \br z'\,  \mtx d \sim z\eq x$, while $\br z'\, \mtx C \eq \br 0 \sim zs \eq \br z\,\mtx b\, \wedge \br z\, \mtx A\eq \br 0$. Thus $\D(s\psi) \sim \exists \br z(xs\eq \br z\,\mtx b\, \wedge \br z\, \mtx A\eq \br 0)$, which is $(\D\psi)s^{-1}$.

 For the second statement, let  $\phi =  s^{-1}\psi = (\mtx A\, | \, \mtx b\, sx) = (\mtx A\, | \, \mtx bs\,x)$. Then $\D\phi= \exists \br z(x \eq \br z\,  \mtx bs \wedge \br z\, \mtx A\eq \br 0)$. Clearly, if $x$ satisfies $\D\psi$, then $xs$ satisfies $\D\phi$, which proves the second statement. 
 
 For the third, let $xs\in M$ satisfy $\D\phi$. If $M[s]=0$, then clearly $x$ satisfies $\D\psi$.
\end{proof}

{\bf Character modules.} $M^+ := \Hom(M, \mathbb{Q/Z})$ can be used to generate the dual definable subcategory, and we extend this to arbitrary classes $\cC$ by letting $\cC^+$ be  $\{ C^+ \,|\, C\in \cC\}$. Given a definable subcategory $\cD$, its dual  $\D\cD$ is then the definable subcategory generated by $\cD^+$. Further, a character $f\in M^+$ annihilates a pp subgroup $\phi(M)$ if and only if $f\in \D\phi(M^+)$, \cite[Lemma 1.3.12]{P2}.
{\bf Flat modules and absolutely pure modules.} We use  $_R\flat$ (resp., $\flat_R$) for the class of flat left (resp., right) $R$-modules, and   $_R\sharp$  (resp., $\sharp_R$) for the class of absolutely pure left (resp., right) $R$-modules. 
The following three results involving pp formulas are essential to our investigation, Zimmermann's characterization of flatness, \cite{Zim} (cf.\ \cite[Thm.2.3.9]{P2}), the description of absolutely purity given in \cite{PRZ2}  (cf.\ \cite[Prop.2.3.3]{P2}), and Herzog's theorem stating that the definable subcategories generated by the flats on one side and  the absolutely pure modules on the other are mutually dual, \cite{Her} (cf.\  \cite[Thm.5.4.1 and Prop.3.4.26]{P2}).  The last two are easy consequences.

.  

\begin{fact}\label{fact} Let $M\in \RMod$.
\begin{enumerate}[\rm (1)]
 \item $M$ is flat iff $\phi(M)=\phi(_RR)M$, for every unary pp formula $\phi$.
 \item $M$ is absolutely pure  iff $\phi(M)=\ann_M\D\phi(_RR)$, for every unary pp formula $\phi$.
 \item $\D\langle {_R\flat} \rangle = \langle \sharp_R  \rangle$ and $\D\langle {_R\sharp} \rangle = \langle \flat_R  \rangle$.
  \item If a pp pair $\phi/\psi$ opens up in a flat module, it opens up in $_RR$.
 \item If a pp pair $\phi/\psi$ opens up in an absolutely pure module, the dual pair $\D\psi/\D\phi$ opens up in $R_R$.
\end{enumerate}
 \end{fact}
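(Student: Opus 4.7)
The plan is to treat parts (1)--(3) as cited classical results (Zimmermann for (1), PRZ2 for (2), and Herzog's duality theorem for (3), which the paper already attributes to their sources), and to derive (4) and (5) as the promised easy consequences from (1) and (2) respectively.

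For (4), I would contrapose via (1). Suppose the pair $\phi/\psi$ is closed in $_RR$, i.e., $\phi(_RR)=\psi(_RR)$. Then for any flat left module $F$, part (1) gives
\[
\phi(F)=\phi(_RR)\,F = \psi(_RR)\,F = \psi(F),
\]
so the pair is closed in $F$. Equivalently, if $\phi/\psi$ opens up in some flat $F$, then $\phi(_RR)\ne \psi(_RR)$; combined with the lattice inequality $\psi\le\phi$ (which yields $\psi(_RR)\subseteq\phi(_RR)$), this upgrades to strict containment, hence openness in $_RR$. No further ingredients are needed.

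For (5), the plan is parallel, using (2) in place of (1) and remembering that elementary duality reverses inclusions. Suppose $\phi/\psi$ opens in an absolutely pure left module $M$, so $\psi(M)\subsetneq\phi(M)$. Part (2) rewrites this as
\[
\ann_M\D\psi(R_R)\;=\;\psi(M)\;\subsetneq\;\phi(M)\;=\;\ann_M\D\phi(R_R).
\]
Since $\psi\le\phi$ dualizes to $\D\phi\le\D\psi$, one has $\D\phi(R_R)\subseteq\D\psi(R_R)$ as left ideals of the bimodule $R$. If these two left ideals coincided their annihilators in $M$ would coincide as well, contradicting the displayed strict inclusion. Therefore $\D\phi(R_R)\subsetneq\D\psi(R_R)$, which is precisely openness of the dual pair $\D\psi/\D\phi$ in $R_R$.

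The main (and only) obstacle I anticipate is purely bookkeeping: keeping track of the left/right switch under $\D$ and remembering that the evaluation of a right pp formula in the bimodule $_RR_R$ produces a left ideal, so that $\ann_M(-)$ in (2) is well defined inside the left module $M$. Once this orientational care is exercised, both (4) and (5) reduce to a one-line application of (1) or (2), as advertised.
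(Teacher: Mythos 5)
Your proposal is correct and matches the paper's intent: the paper cites (1)--(3) from Zimmermann, PRZ2, and Herzog and merely remarks that (4) and (5) are easy consequences, which is exactly what you supply — (4) by the one-line computation $\phi(F)=\phi(_RR)F$ and (5) by rewriting both pp subgroups as annihilators of the left ideals $\D\phi(R_R)\subseteq\D\psi(R_R)$ and noting equal ideals would force equal annihilators. Your bookkeeping of the left/right switch (in particular that $\D\phi(R_R)$ is a left ideal, so $\ann_M$ makes sense in the left module $M$) is also correct.
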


\begin{fact} \label{Wur} Let $M$ be a left $R$-module.
 \begin{enumerate}[\rm (1)] 
 \item  {\rm (Lambek)} $M$ is flat if and only if $M^+$ is absolutely pure  (hence injective).
  \item {\rm (Würfel I)} \cite[Krit.1.5]{Wu}. If $M^+$ is flat,  $M$ is  absolutely pure.
  \item {\rm (Würfel II)} \cite[Satz 1.6]{Wu}.  The converse of (2)  is true for every $M$ if and only if $R$ is left coherent. Cf.\ \cite[Thms.4.3, 4.4]{PRZ}.
\end{enumerate}
\end{fact}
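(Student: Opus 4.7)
I would read both sides through the pp-lattice criteria of Fact \ref{fact}. Flatness of $M$ amounts, by Fact \ref{fact}(1), to $\phi(M)=\phi(_RR)M$ for every left pp $\phi$; absolute purity of the right module $M^+$ amounts, by the right-hand analogue of Fact \ref{fact}(2), to $\psi(M^+)=\ann_{M^+}\D\psi(R_R)$ for every right pp $\psi$. Since $\D$ is an involution, every right pp formula is $\D\phi$ for some left $\phi$, and by \cite[Lemma 1.3.12]{P2} the set $\D\phi(M^+)$ consists of the characters killing $\phi(M)$, while $\ann_{M^+}\phi(_RR)$ consists of the characters killing $\phi(_RR)\cdot M$. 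As $\mathbb{Q}/\mathbb{Z}$ is an injective cogenerator of $\Ab$, these two families of characters coincide (for every $\phi$) iff the two subgroups $\phi(M)$ and $\phi(_RR)M$ of $M$ coincide (for every $\phi$), which gives the equivalence. The parenthetical ``hence injective'' then reduces to the standard fact that a character module is always pure-injective, so absolutely pure plus pure-injective forces injective.

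\textbf{Plan for (2) and for (3) $(\Leftarrow)$.} For (2), I would exploit the canonical pure embedding $M\hookrightarrow M^{++}$: if $M^+$ is flat, then part (1) applied to the left module $M^+$ makes $M^{++}$ injective, so $M$ is a pure submodule of an injective and hence absolutely pure. For the easy direction of (3), Chase's theorem states that $R$ is left coherent iff $\flat_R$ is closed under arbitrary direct products; since $\flat_R$ is already closed under pure submodules and direct limits, left coherence upgrades $\flat_R$ to a definable subcategory, i.e.\ $\langle\flat_R\rangle=\flat_R$. Then by Fact \ref{fact}(3), every $M\in{_R\sharp}$ has $M^+\in\D\langle{_R\sharp}\rangle=\langle\flat_R\rangle=\flat_R$, which is what was claimed.

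\textbf{Plan for (3) $(\Rightarrow)$.} This is the substantive direction: the aim is to deduce left coherence by showing, via Chase, that an arbitrary product of flat right $R$-modules is flat. Given flats $\{N_i\}_{i\in I}$, each $N_i^+$ is injective by (1), hence absolutely pure, and a short $\mathrm{Ext}^1$-calculation with finitely presented test modules shows that the direct sum $\bigoplus_i N_i^+$ remains absolutely pure (since $\mathrm{Ext}^1(F,-)$ commutes with direct sums when $F$ is finitely presented, one has $\mathrm{Ext}^1(F,\bigoplus N_i^+)=\bigoplus\mathrm{Ext}^1(F,N_i^+)=0$). The hypothesis then places $(\bigoplus_i N_i^+)^+=\prod_i N_i^{++}$ in $\flat_R$. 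Since the product of the canonical pure embeddings $N_i\hookrightarrow N_i^{++}$ is again pure, $\prod_i N_i$ is a pure submodule of the flat module $\prod_i N_i^{++}$; and since pure submodules of flat modules are always flat (from the fact that $F/M$ is flat whenever $M$ is pure in a flat $F$, by a short Tor-computation), $\prod_i N_i$ is flat. Hence $\flat_R$ is closed under products and $R$ is left coherent. The step I expect to need the most care is the closure of $_R\sharp$ under direct sums---true in general but requiring the $\mathrm{Ext}^1$-calculation above, since closure of $_R\sharp$ under \emph{direct limits} is itself equivalent to coherence and therefore cannot be invoked at this stage.
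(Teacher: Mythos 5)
Your proposal is correct; note, though, that the paper offers no proof of this statement at all -- it is recorded as a Fact with citations (Lambek, W\"urfel \cite[Krit.1.5, Satz 1.6]{Wu}, cf.\ \cite[Thms.4.3, 4.4]{PRZ}), so there is nothing in-paper to compare against line by line. What you have done is reconstruct the classical results from the paper's own toolkit, and the reconstruction holds up: (1) via Zimmermann's criterion, the absolute-purity criterion, the fact that $\D\phi(M^+)$ is exactly the set of characters killing $\phi(M)$, and the injective-cogenerator property of $\mathbb{Q}/\mathbb{Z}$ (the inclusion $\phi({_RR})M\subseteq\phi(M)$ is what makes ``same annihilators'' equivalent to ``same subgroups''); (2) via purity of $M\hookrightarrow M^{++}$ and (1); (3)($\Leftarrow$) via Chase plus the definable-subcategory formulation of Herzog's duality, using that $\cD^+$ generates (hence lies in) $\D\cD$; (3)($\Rightarrow$) via closure of absolutely pure modules under direct sums (your $\mathrm{Ext}^1$ computation with a finitely generated syzygy is fine; the pp-formula argument that $\phi$ and $\ann$ commute with $\oplus$ is even quicker), the identification $(\bigoplus_i N_i^+)^+=\prod_i N_i^{++}$, purity of products of pure embeddings, and flatness of pure submodules of flats, feeding back into Chase. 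This buys a self-contained proof where the paper is content to cite, at the cost of invoking Chase and Herzog duality, whereas W\"urfel's original argument is more elementary homological algebra. Two cosmetic slips you should fix: in (1) the right-hand analogue of Fact \ref{fact}(2) reads $\psi(M^+)=\ann_{M^+}\D\psi({_RR})$, not $\D\psi(R_R)$ (your subsequent computation with $\phi({_RR})$ is the correct one), and in (2) the module $M^+$ to which you apply (1) is a \emph{right} module, so it is the left--right mirror of (1) that you are using.
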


{\bf Divisible modules.} 
%
Recall, a module $M$ is \texttt{divisible} (in the `correct' sense) if, for all $r\in R$, an element $a\in M$ is divisible by $r$ in $M$ whenever it is annihilated by all left annihilators of $r$ in $R$, cf.\  \cite[Def.3.16]{LMR}.
 In other words, the formula $\phi = r|x$ is satisfied by all elements of $M$ that are annihilated by all ring elements in $\D\phi(R_R)$, since $\D\phi = (xr \eq 0)$. This statement is an instance of Fact \ref{fact}(2) above for this particular choice of formula $\phi$, which shows the following well-known 
 
 \begin{fact}\label{fact2} 
 Every absolutely pure module is divisible.
 \end{fact}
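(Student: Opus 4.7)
The plan is to apply Fact \ref{fact}(2) to the divisibility formula $\phi = (r\,|\,x)$, one scalar $r\in R$ at a time, and show that its conclusion is literally the defining condition of divisibility spelled out in the paragraph preceding the statement.

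First I would compute the elementary dual. Writing $\phi = \exists y(x \eq ry)$ in the standard form, $\phi = (\mtx A\,|\,\mtx b\,x)$ with $\mtx A = (r)$ and $\mtx b = (1)$. By the definition of $\D$ this yields $\D\phi = \exists z(x\eq z \wedge zr\eq 0)$, which is equivalent to the right pp formula $xr\eq 0$. Evaluating this in the right module $R_R$ picks out the left annihilator of $r$, so $\D\phi(R_R) = \lann(r)$.

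Next, by Fact \ref{fact}(2) applied to an absolutely pure module $M$, the pp subgroup $\phi(M) = rM$ equals $\ann_M \D\phi(R_R) = \ann_M\lann(r)$, i.e.\ the set of $a\in M$ annihilated by every element of $\lann(r)$. Comparing this with the definition of divisibility given just above the statement, this says exactly that every element of $M$ annihilated by all left annihilators of $r$ is divisible by $r$ in $M$. Since $r$ was arbitrary, $M$ is divisible.

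There is really no obstacle here---the proof amounts to plugging the divisibility formula into the already-invoked Fact \ref{fact}(2) and observing that the resulting equation is the definition one wants. The only bookkeeping step is the dualisation $\D(r\,|\,x) = (xr\eq 0)$, which is straightforward from the definition of $\D$ on $\Lambda^1$ and has in effect already been carried out in the paragraph preceding Fact \ref{fact2}.
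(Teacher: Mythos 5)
Your proposal is correct and is essentially the paper's own argument: the paper likewise observes that $\D(r\,|\,x) = (xr\eq 0)$, so that Fact \ref{fact}(2) applied to $\phi = r\,|\,x$ gives $rM = \ann_M\lann(r)$ in any absolutely pure $M$, which is exactly the divisibility condition. Your computation of the dual and the evaluation $\D\phi(R_R)=\lann(r)$ are both accurate, so there is nothing to add.
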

 
(For the special case of injectives, see for instance  \cite[Cor.3.17$^\prime$]{LMR}.)
{\bf Variants of injectivity.} By Baer's criterion, a module $M$ is \texttt{injective} iff every map from a left ideal $I$ to $M$ can be extended to (factors through) $_RR$.  Following \cite{ES}, $M$ is  \texttt{$\kappa$-injective} if this is true for every left ideal $I$ that has a set of generators  of cardinality  $<\kappa$. Prominent examples are  \texttt{$2$-injective} modules, which are better known as \texttt{principally injective} or \texttt{$P$-injective} or just \texttt{divisible} modules as above (see also  \cite{NY}.) Another important case is that of  \texttt{$\aleph_0$-injective} modules. We have injective$\implies$absolutely pure$\implies$$\aleph_0$-injective$\implies$divisible. If the ring is left coherent \cite[Prop.3.23]{ES} or an RD ring \cite[Rem.\,2.17]{PPR}, the second arrow can be reversed. (It seems still open for which rings this is true on one side, cf.\ \cite[p.268]{ES}.) The first arrow can be reserved  if and only if the first two arrows can be reversed, which is the case precisely when the ring is left noetherian, \cite[Prop.3.24]{ES}.

All other notation and terminology can be found in the cited texts.

\section{Two dichotomies on unary pp formulas}\label{two}
We start with an old lemma I originally proved  by bare hands (that is, in the style of Robinson, in its Eklof--Sabbagh shade). With Fact \ref{fact} at hand, I can now give a simple conceptual proof.  (The result also appears in   \cite[Lemma 2.4]{HP}.) 

\begin{lem}\label{THELemma}\cite[Part II, Lemma 2]{RegTyp}.
 If $\phi(N)\not=N$ for some absolutely pure left $R$-module $N$, then there is $r\in R^0$ such that $r\phi(M)=0$ for all $M\in \RMod$.
\end{lem}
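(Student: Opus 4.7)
The plan is to use elementary duality to transfer the hypothesis about $N$ into an existence statement inside $R_R$, and then verify by a one-line matrix calculation that the element extracted is the universal annihilator we want.

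First, $\phi(N)\neq N$ says precisely that the pp pair $(x\eq x)/\phi$ opens up in the absolutely pure module $N$. By Fact \ref{fact}(5), the dual pair $\D\phi/\D(x\eq x)$ opens up in $R_R$. Since $\D(x\eq x)\sim (x\eq 0)$ (the dual of the top formula is the bottom formula, as can be read off directly from the definition of $\D$), this simply says $\D\phi(R_R)\neq 0$, so I may pick a nonzero $r\in \D\phi(R_R)$.

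Next, write $\phi$ in standard form as $\mtx A\, |\, \mtx b\, x$ with $\mtx A\in {^nR^m}$ and $\mtx b\in {^nR}$, so that $\D\phi = \exists \br z\,(x\eq \br z\,\mtx b \wedge \br z\,\mtx A\eq \br 0)$. The membership $r\in \D\phi(R_R)$ is witnessed by some row vector $\br z\in R^n$ with $r=\br z\,\mtx b$ and $\br z\,\mtx A=\br 0$. I claim this single $r$ annihilates $\phi(M)$ in \emph{every} $M\in\RMod$, not just in absolutely pure ones. Indeed, given $a\in \phi(M)$ with witness $\br y\in M^m$ satisfying $\mtx A\,\br y = \mtx b\, a$, a direct calculation gives
\[
ra \;=\; (\br z\,\mtx b)\,a \;=\; \br z\,(\mtx b\, a) \;=\; \br z\,(\mtx A\,\br y) \;=\; (\br z\,\mtx A)\,\br y \;=\; 0.
\]

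There is no real obstacle: Fact \ref{fact}(5) does the conceptual work of producing a nonzero $r$ on the right side, and the universality of its annihilating action then follows from the associativity calculation above (which is essentially the Lemma Presta criterion for $r\phi\leq 0$ in $_R\Lambda$). The only subtle point to flag is that the $r$ produced by duality lives in $R_R$ and is extracted from the dual \emph{right} formula $\D\phi$, yet automatically annihilates the \emph{left} pp subgroup $\phi(M)$ in every left module $M$; this is the content of the displayed computation.
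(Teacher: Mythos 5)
Your proof is correct. The first half coincides with the paper's: from $\phi(N)\neq N$ and absolute purity you extract a nonzero $r\in\D\phi(R_R)$ (you do it via Fact \ref{fact}(5) and $\D(x\eq x)\sim(x\eq 0)$, the paper does it via the annihilator description in Fact \ref{fact}(2); these are interchangeable). Where you genuinely diverge is in showing that this $r$ annihilates $\phi(M)$ for \emph{every} $M$: the paper stays conceptual, embedding $M$ into its injective hull $E$ and applying Fact \ref{fact}(2) a second time to get $\phi(E)=\ann_E\D\phi(R_R)$, hence $r\phi(M)\subseteq r\phi(E)=0$; you instead unpack a witness $\br z$ for $r\in\D\phi(R_R)$ (so $\br z\,\mtx A=\br 0$, $r=\br z\,\mtx b$) and verify $ra=0$ by the associativity computation. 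Your computation is exactly the easy direction of Lemma \ref{Ziegler} (equivalently, the trivial half of Lemma Presta for $\phi\leq rx\eq 0$), so in effect you prove the sharper ring-theoretic statement that \emph{any} nonzero element of $\D\phi(R_R)$ bounds $\phi$, which the paper only records later. What the paper's route buys is brevity and independence from any chosen matrix presentation of $\phi$ (it is the ``conceptual'' proof the author advertises, in contrast with the ``bare hands'' argument your version resembles); what yours buys is a self-contained, constructive identification of the bounding scalar without invoking injective hulls. Both are complete; the only cosmetic slip is writing the witness tuple as $\br y\in M^m$ where the paper's convention would have the column vector $\br y\in{^mM}$.
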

\begin{proof}
The above description of  $\phi(N)$ as an annihilator implies that, if $\phi(N)\not=N$, the left ideal $\D\phi(R_R)$ cannot be zero. Pick a nonzero element $r$ therein and  let  $M$ be any left $R$-module. In its injective hull, $E$,  again $\phi(E)=\ann_E\D\phi(R_R)$. Thus $r$ annihilates $\phi(E)$ and therefore also $\phi(M)\subseteq \phi(E)$.
\end{proof}

This  lemma states a dichotomy. To
reformulate it in a catchier way, I introduce some terminology.

\begin{definition} Let $\phi$ be a unary (left) pp formula. 

\begin{enumerate}[\rm(1)]
\item  Call  $\phi$ \texttt{high}\footnote{Prest and Puninski \cite[after Fact 2.3]{PP}, or \cite[intro to \S8.2.1]{P2}, call `low' and `high' what I call `bounded' and `cobounded,' respectively. The corresponding terms coincide over two-sided Ore domains.} if $\phi(N)=N$ for every absolutely pure (equivalently, for every injective) module $N$, in which case we say, $\phi$ \texttt{covanishes} on $N$. 

\item $\phi$ is called \texttt{bounded} if there is $r\in R^0$ such that $r\phi(M)=0$ for all $M\in \RMod$, i.e., such that  $\phi\leq rx\eq 0$.
 \item $\phi$ is said to be \texttt{low} if it vanishes on flats (equivalently, {\rm Fact \ref{fact}(1)}, if it vanishes on $_RR$). 
 
\item $\phi$ is  called \texttt{cobounded} if there is $s\in R^0$ such that $sM\subseteq\phi(M)$, i.e.,  $s(M/\phi(M))=0$, for all $M\in \RMod$,\footnote{Beware, $M/\phi(M)$ may not be an  $R$-module.} i.e., such that $s|x\leq \phi$.

\item For short, call a formula \texttt{high-low} if it is both high and low.
\end{enumerate}
 Similarly for right formulas, and we extend this terminology to pp subgroups.
\end{definition}

\begin{rem}\label{fundrem}
\begin{enumerate}[\rm (1)]
\item By {\rm Fact \ref{fact}(1)}, $_RR$ is a test module for lowness. 
\item $\E(_RR)$ is a test module for highness. For if\/ $1\in R$ satisfies $\phi$ in some module $A\supseteq {_RR}$, then, as $1$ in $R$ can be sent to any element $b$ in any module  $B$, and, if $B$ is injective, this map can be factored through the inclusion $_RR\subseteq A$, one obtains $b\in\phi(B)$. Moreover, $\phi$ is high iff\/ $1\in \phi(E(_RR))$.
\item If $r\in R$ is not a right zero divisor, the divisibility formula $r|x$ is high, by Fact \ref{fact2}. For  the converse, see {\rm Cor.\,\ref{highdiv}(2)}.

\item The annihilation formula $rx\eq 0$  is low if and only if $r$ is not a left zero divisor.  Otherwise it is (bounded and) cobounded. More specifically, if\/ $s\in R^0$ and $rs=0$, then $rx\eq 0$ is cobounded (implied) by the formula $s|x$.
\item As all modules over a (von Neumann) regular ring are both flat and absolutely pure, every high pp formula is equivalent to $x\eq x$ and every low formula is equivalent to $x\eq 0$. 
All other formulas are  bounded \emph{and} cobounded. By \cite[Prop.2.32]{MR}, low formulas are equivalent to $x\eq 0$ over any left  absolutely pure ring, see also {\rm Prop.\,\ref{u=1}}. By duality, all high formulas are equivalent to $x\eq x$  over any right  absolutely pure ring. Thus two-sided absolutely pure rings show the same behavior as  regular rings as far as lowness and highness are concerned.
\item More concretely, if $0, 1 \not=e=e^2\in R$, the bounded formula $ex\eq 0$ is equivalent to the cobounded formula $(1-e)|x$ and the cobounded formula $e|x$  is equivalent to the bounded formula $(1-e)x\eq 0$; further, the sum of those two bounded formulas is equivalent to the (high) formula $x\eq x$, while the intersection of the other two, cobounded, formulas is equivalent to $x\eq 0$. \\
Note, over regular rings all unary pp formulas are of this shape (up to $\sim$). 
\end{enumerate}
\end{rem}


\begin{lem}[Duality Lemma]\label{DualityLemma}
\begin{enumerate}[\rm (1)]
\item $\phi$ is high iff\/ $\D\phi$ is low (and vice versa).
\item  $\phi$ is bounded iff\/ $\D\phi$ is cobounded  (and vice versa).
\end{enumerate}
 \end{lem}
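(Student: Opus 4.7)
The plan is to reduce each dichotomy to a single scalar condition inside the ring, using Facts~\ref{fact}(1) and (2) together with the fact that $\D$ is an order-reversing involution on $\Lambda$. For part~(1), testing highness on $\E(_RR)$ via Fact~\ref{fact}(2) converts it into lowness of $\D\phi$; for part~(2), a direct computation of the dual of the atomic formula $rx\eq 0$ does the job.

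For (1), I would first apply Fact~\ref{fact}(2) to the injective hull $E=\E(_RR)$ to get $\phi(E)=\ann_E\D\phi(R_R)$. Combined with Rem.~\ref{fundrem}(2), which identifies highness of $\phi$ with $1\in\phi(E)$, this reduces highness to the scalar condition $\D\phi(R_R)=0$. On the other side, Fact~\ref{fact}(1), applied on the right to the formula $\D\phi$, says $\D\phi$ vanishes on every right flat module iff $\D\phi(R_R)=0$. Both characterizations collapse to the same scalar condition, giving $\phi$ high $\Leftrightarrow$ $\D\phi$ low; the dual direction is immediate from $\D^2=1$.

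For (2), I would exploit the order-reversal of $\D$ to convert $\phi\leq(rx\eq 0)$ into $\D(rx\eq 0)\leq\D\phi$, with $r$ unchanged. Writing $rx\eq 0$ in standard form $\mtx A\,|\,\mtx B\,x$ with $\mtx A=(0)$ and $\mtx B=(r)$, the definition of $\D$ immediately gives $\D(rx\eq 0)\sim\exists z(x\eq z\,r)$, the right divisibility formula, which in any right $R$-module $M$ defines $Mr$. Hence $\D(rx\eq 0)\leq\D\phi$ is precisely the condition $Mr\subseteq\D\phi(M)$ for all $M$, i.e.\ coboundedness of $\D\phi$ witnessed by the same $r\in R^0$. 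Quantifying over $r$ gives one direction, and $\D^2=1$ gives the other.

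I do not expect a serious obstacle here. The whole argument is a bookkeeping exercise around Herzog's duality (Fact~\ref{fact}(3)); the only computation to verify is that the dual of an annihilation formula is a (right) divisibility formula, which is one unwinding of the definition of $\D$ recalled in the preliminaries. The nonzero witness $r$ is simply a ring element, so it persists unchanged through dualization.
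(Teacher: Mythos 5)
Your proposal is correct, and part (2) is essentially the paper's own argument: bounded means lying below a nontrivial annihilation formula, cobounded means lying above a nontrivial divisibility formula, and the anti-isomorphism $\D$ together with the computation $\D(rx\eq 0)\sim (r\,|^{op}\,x)$ exchanges the two, with the same witness $r\in R^0$ passing through unchanged.

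For part (1), however, you take a genuinely different (and somewhat more elementary) route. The paper phrases highness as the pair $x\eq x/\phi$ being shut in all absolutely pure modules and lowness of $\D\phi$ as the pair $\D\phi/x\eq 0$ being shut in all flat right modules, and then quotes Herzog's duality of the definable subcategories generated by $_R\sharp$ and $\flat_R$, i.e.\ Fact~\ref{fact}(3). You instead bypass the definable-subcategory machinery and reduce both conditions to the single ring-level statement $\D\phi(R_R)=0$: Fact~\ref{fact}(2) applied to $\E(_RR)$ (together with the test-module observation of Rem.~\ref{fundrem}(2), though even that is dispensable, since $\D\phi(R_R)=0$ makes $\phi(N)=\ann_N 0=N$ for \emph{every} absolutely pure $N$), and Fact~\ref{fact}(1) on the right for lowness of $\D\phi$. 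What your route buys is concreteness: it only uses the Zimmermann and PRZ characterizations of flat and absolutely pure modules, and it yields as a by-product exactly the reformulations the paper records afterwards in Rem.~\ref{reformulation}(1)--(2). What the paper's route buys is that the same one-line argument works for pairs of formulas of any arity and makes transparent that the statement is an instance of the duality $\D\langle {_R\sharp}\rangle=\langle\flat_R\rangle$ of definable subcategories, rather than a feature of the unary case. Both are complete proofs; no gap.
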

 \begin{proof} 
 $\phi$ is high iff the pair $x\eq x/\phi$ is shut  in all absolutely pure (or all injective) modules, and 
 $\phi$ is low iff the pair  $\phi(x)/ x\eq 0$ is shut  in all flat modules (or simply in the corresponding regular module). The first assertion now follows from Fact \ref{fact}(3).

A formula $\phi$ is bounded iff it is below some nontrivial  annihilation formula, i.e., iff
 $\phi(x)\,\leq\, rx\,\eq\,0$ with $r\in R^0$. A formula $\phi$ is cobounded iff it is above some nontrivial divisibility formula, i.e., 
iff $s|x \leq\phi(x)$ with $s\in R^0$.
 By  elementary duality, the former is equivalent to $r|^{op}x \leq \D\phi(x)$, i.e., to $\D\phi(x)$ being cobounded. 
\end{proof}

 
\begin{prop}
\begin{enumerate}[\rm (1)]
 \item {\rm (Dichotomy I)}  Every unary pp formula is either high or bounded (and not both). 
  \item {\rm (Dichotomy II)}  Every unary pp formula is either low or cobounded (and not both). 
   \end{enumerate}
\end{prop}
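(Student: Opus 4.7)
The plan is to derive Dichotomy I directly from Lemma \ref{THELemma} plus the test-module observation in Rem.\,\ref{fundrem}(2), and then obtain Dichotomy II as its elementary dual via the Duality Lemma.

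For Dichotomy I, first I would handle the disjunction: if $\phi$ is not high, then by definition there exists an absolutely pure $N$ with $\phi(N)\neq N$, and Lemma \ref{THELemma} hands us an $r\in R^0$ such that $r\phi(M)=0$ in every $R$-module $M$, i.e.\ $\phi$ is bounded. For the ``not both'' half, I would invoke Rem.\,\ref{fundrem}(2): if $\phi$ is high then $1\in \phi(E(_RR))$; if moreover $\phi$ were bounded, there would be $r\in R^0$ annihilating $\phi(E(_RR))$, whence $r\cdot 1=r=0$, contradicting $r\in R^0$. So every formula is high or bounded, and not both.

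For Dichotomy II, I would simply dualize. By the Duality Lemma, $\phi$ is low iff $\D\phi$ is high, and $\phi$ is cobounded iff $\D\phi$ is bounded. Applying Dichotomy I (proved on the right-hand side $\Lambda_R$, via the same argument) to $\D\phi$, exactly one of ``$\D\phi$ is high'' and ``$\D\phi$ is bounded'' holds, which translates to exactly one of ``$\phi$ is low'' and ``$\phi$ is cobounded.''

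There is no real obstacle here: the nontrivial content is entirely carried by Lemma \ref{THELemma} (for the ``or'' direction of Dichotomy I) and by Fact \ref{fact}(3) together with Lemma \ref{DualityLemma} (for the passage from Dichotomy I to Dichotomy II). The only point that requires a moment's care is the mutual exclusivity clauses, where one must produce an explicit witness---for Dichotomy I this is the element $1\in\phi(E(_RR))$, and for Dichotomy II this follows by duality from that witness.
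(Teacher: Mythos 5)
Your proposal is correct and follows the paper's own route exactly: Lemma \ref{THELemma} for the ``high or bounded'' disjunction, the injective hull of the ring (via $1\in\phi(\E({_RR}))$) for exclusivity, and the Duality Lemma to transfer Dichotomy I to Dichotomy II. You have merely made explicit the ``easy argument involving the injective hull'' that the paper leaves to the reader.
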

\begin{proof} Lemma \ref{THELemma} says that a unary formula is high or bounded. That it cannot be both follows from
 an easy argument involving the injective hull of the ring.  Now apply the Duality Lemma  to obtain Dichotomy II.
 \end{proof}

\begin{rem}\label{reformulation} Some reformulations are worth keeping in mind.
\begin{enumerate}[\rm (1)]
\item $\phi$ is low iff\/ $\phi(_RR)=0$. 
 \item $\phi$ is high iff\/ $\D\phi(R_R)=0$. 
\item {\rm (Dichotomy I)} \hspace{1mm}
$\D\phi(R_R)\not=0$ iff $\phi$ is bounded. 
\item {\rm (Dichotomy II)} \hspace{1mm}
$\phi(_RR)\not=0$ iff $\phi$ is cobounded.
\item $\phi$ is high-low iff\/ $\phi$ and\/ $\D\phi$ are high iff\/ $\phi$ and\/ $\D\phi$ are low.
\item In particular, the existence of a high-low formula is a left-right symmetric property of the ring.
\item  $\phi$ is bounded and cobounded iff\/ $\phi$ and\/ $\D\phi$ are bounded iff\/ neither $\phi$ nor\/ $\D\phi$ are low.
\item In particular, the existence of formulas that are bounded and cobounded is a left-right symmetric property of the ring.
\end{enumerate}
 \end{rem}

 Martin Ziegler suggested to  find  a syntactic, or rather ring-theoretic criterion for boundedness. Indeed, Lemma Presta yields one very easily.  
\begin{lem}\label{Ziegler} Let $\mtx A\in {^nR\/^m}$ and $\mtx  b\in {^nR}$.

 The unary pp formula $\mtx A\,|\,\mtx b x$ is bounded if and only if 
 there is $\mtx t \in {R^n}$ such that $\mtx  t \, \mtx A = \bar 0$ and $\mtx t\, \mtx  b \not= 0$.
\end{lem}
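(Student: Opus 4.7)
The plan is to apply the Lemma Presta (Fact~\ref{LP}) directly to the inequality $\phi \leq (rx \eq 0)$, where $\phi = (\mtx A \, | \, \mtx b x)$. By definition, $\phi$ is bounded exactly when such an inequality holds for some $r \in R^0$, so translating that inequality into a matrix condition will give the desired ring-theoretic criterion.

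First I would write the target formula $rx \eq 0$ in the standard Cohn form $\mtx A' \, | \, \mtx b' x$, taking $\mtx A' = 0 \in {^1 R^1}$ and $\mtx b' = r \in {^1 R^1}$; then $\exists y\,(0 \cdot y = rx)$ is obviously equivalent to $rx \eq 0$. With $\mtx A \in {^n R^m}$ and $\mtx b \in {^n R^1}$, Fact~\ref{LP} says that $\mtx A \, | \, \mtx b x \leq 0 \, | \, r x$ holds if and only if there exist matrices $X, Y, Z$ over $R$ satisfying
\[
\mtx A' X + Y \mtx b = \mtx b' \qquad\text{and}\qquad Y \mtx A = \mtx A' Z.
\]
Since $\mtx A' = 0$, these reduce to $Y \mtx b = r$ and $Y \mtx A = 0$, with $Y \in R^n$ a single row vector (the other variables $X, Z$ drop out or become vacuous).

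Setting $\mtx t := Y$, we conclude that $\phi \leq (rx \eq 0)$ is equivalent to the existence of $\mtx t \in R^n$ with $\mtx t \mtx A = \bar 0$ and $\mtx t \mtx b = r$. Quantifying over $r \in R^0$ yields that $\phi$ is bounded iff there exists $\mtx t \in R^n$ with $\mtx t \mtx A = \bar 0$ and $\mtx t \mtx b \neq 0$, which is exactly the claim.

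I do not foresee any real obstacle; the only delicate point is making sure the dimensions of the matrices in the application of Lemma Presta line up, in particular presenting $rx \eq 0$ as a pp formula so that Fact~\ref{LP} is applicable verbatim.
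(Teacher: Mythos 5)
Your proposal is correct and follows essentially the same route as the paper: write the bound $\phi\leq (rx\eq 0)$ as $\mtx A\,|\,\mtx b x\leq 0\,|\,rx$ and apply Lemma Presta with $\mtx C=0$, $\mtx D=r$, so the two matrix equations collapse to $\mtx t\,\mtx A=\bar 0$ and $\mtx t\,\mtx b=r$. The only cosmetic difference is that the paper verifies the easy direction by the direct observation that $r:=\mtx t\,\mtx b$ annihilates $\phi(M)$ in every module, whereas you get both directions from the ``if and only if'' of Lemma Presta, which is equally fine.
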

\begin{proof} If there is such  $\mtx  t$, then clearly $r := \mtx  t \, \mtx  b\not= 0$ annihilates the (subgroup defined by the)  formula $\mtx  A\,|\,\mtx  b x$ everywhere.

For the nontrivial direction, assume this formula is bounded, i.e., $\mtx  A\,|\,\mtx  b x \leq rx\eq 0$ for some nonzero $r\in R$. Write this implication as $\mtx  A\,|\,\mtx  b x \leq 0\,|\, rx$ in order to apply Lemma Presta, Fact \ref{LP}, as follows (with $\mtx  C=0$ and $\mtx  D=r$). There are matching matrices $X,  Y$ such that   $X\, \mtx  A = \bar 0 \in R^m$ and $r= X\, \mtx   b$. As $X$ must be in ${R^n}$, we write it as a vector, $\mtx  t $, and the assertion follows. 
\end{proof}

 We put this into context by pinning down similar criteria in the ring for the other three properties in question.

\begin{prop} Let $\mtx  A\in {^n\!R\/^m}$ and $\mtx  b\in {^n\!R}$. 

\begin{enumerate}[\rm (1)]
 \item $\mtx  A\,|\,\mtx  b x$ is bounded if and only if\/ $\lann(\mtx  A)\not\subseteq\, \lann(\mtx   b)$. 
 \item  $\mtx  A\,|\,\mtx  b x$ is high if and only if\/ $\lann(\mtx  A)\subseteq\, \lann(\mtx  b)$.
\item  $\mtx  A\,|\,\mtx  b x$ is low if and only if\/   $\mtx  bR \, \cap\,   \mtx  A\, {^m\!R}\, = \{\bar 0\}$ 
and\/ $\rann(\mtx   b) = \{0\}$.
  \item  $\mtx  A\,|\,\mtx  b x$ is cobounded if and only if\/  $\mtx  bR^0 \, \cap\,   \mtx  A\, {^m\!R}\,\not=\emptyset$.
\end{enumerate}
\end{prop}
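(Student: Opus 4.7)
The four parts come in two natural pairings: (1)--(2) are dual reformulations of Lemma~\ref{Ziegler} via Dichotomy~I, while (3)--(4) are direct evaluations (one in $_RR$, one via Lemma Presta) using Remark~\ref{reformulation}(1) and the definition of coboundedness.

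\textbf{Part (1).} Lemma~\ref{Ziegler} already says that $\mtx A\,|\,\mtx b x$ is bounded precisely when some $\mtx t\in R^n$ satisfies $\mtx t\,\mtx A=\bar 0$ and $\mtx t\,\mtx b\neq 0$. The first condition is exactly $\mtx t\in\lann(\mtx A)$, the second exactly $\mtx t\notin\lann(\mtx b)$. Hence boundedness is equivalent to $\lann(\mtx A)\not\subseteq\lann(\mtx b)$.

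\textbf{Part (2).} By Dichotomy~I, a formula is high iff it fails to be bounded. Negating (1) gives $\lann(\mtx A)\subseteq\lann(\mtx b)$.

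\textbf{Part (3).} I will evaluate $\phi:=(\mtx A\,|\,\mtx b x)$ in $_RR$ and apply Remark~\ref{reformulation}(1), according to which $\phi$ is low iff $\phi(_RR)=0$. By definition, $\phi(_RR)=\{r\in R\,|\,\exists\,\mtx y\in{^m\!R}\ \mtx A\,\mtx y=\mtx b\,r\}$. Assuming low, first take $r\in\rann(\mtx b)$, so $\mtx b\,r=\bar 0=\mtx A\,\bar 0$; then $r\in\phi(_RR)=0$, giving $\rann(\mtx b)=0$. Next, if $v=\mtx b\,r=\mtx A\,\mtx y\in\mtx bR\cap\mtx A\,{^m\!R}$, the same argument yields $r=0$, and then $v=\mtx b\,r=0$. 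Conversely, suppose both intersection and annihilator conditions hold. If $r\in\phi(_RR)$, witness $\mtx y$ gives $\mtx b\,r=\mtx A\,\mtx y\in\mtx bR\cap\mtx A\,{^m\!R}=\{\bar 0\}$, whence $r\in\rann(\mtx b)=\{0\}$.

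\textbf{Part (4).} By definition, $\phi$ is cobounded iff some $s\in R^0$ satisfies $s\,|\,x\leq\mtx A\,|\,\mtx b\,x$. I will unpack this implication with Lemma Presta (Fact~\ref{LP}): writing it with $\mtx A'=s$, $\mtx B'=1$, $\mtx C'=\mtx A$, $\mtx D'=\mtx b$ produces the system $\mtx A\,X+Y=\mtx b$ and $Y\,s=\mtx A\,Z$ over $R$. Substituting the first into the second gives $\mtx b\,s=\mtx A(Xs+Z)\in\mtx A\,{^m\!R}$. Conversely, from any equation $\mtx b\,s=\mtx A\,\mtx w$ one recovers a Lemma Presta witness by setting $X=0$, $Y=\mtx b$, $Z=\mtx w$. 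So $s\,|\,x\leq\phi$ is equivalent to $\mtx b\,s\in\mtx A\,{^m\!R}$, and ranging $s$ over $R^0$ yields $\mtx b R^0\cap\mtx A\,{^m\!R}\neq\emptyset$.

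\textbf{Expected difficulty.} Everything is essentially bookkeeping once Lemma~\ref{Ziegler}, Lemma Presta, and the characterization $\phi(_RR)=0$ of lowness are in hand. The one spot that requires slight care is Part~(3), where the two conjuncts $\mtx bR\cap\mtx A\,{^m\!R}=\{\bar 0\}$ and $\rann(\mtx b)=0$ must be extracted separately from the single identity $\phi(_RR)=0$; but the trivial witness $\mtx y=\bar 0$ disentangles them cleanly. No further obstacles appear.
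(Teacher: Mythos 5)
Your proposal is correct. Parts (1)--(3) run exactly as in the paper: (1) is Lemma~\ref{Ziegler} restated in annihilator language, (2) is its negation via Dichotomy~I, and (3) is the evaluation of $\phi(_RR)$ (the paper phrases the same computation contrapositively, but the substance — reading off $\rann(\mtx b)\neq 0$ or a nonzero element of $\mtx bR\cap\mtx A\,{^m\!R}$ from a nonzero $r\in\phi(_RR)$, and conversely — is identical). Where you genuinely diverge is (4): the paper obtains it for free by applying Dichotomy~II to the negation of (3), whereas you prove it directly from the definition of coboundedness by unpacking $s|x\leq\mtx A\,|\,\mtx b x$ with Lemma Presta; your shape bookkeeping and the substitution $\mtx b s=\mtx A(Xs+Z)$, together with the converse witness $X=0$, $Y=\mtx b$, $Z=\mtx w$, are all correct. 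The paper's route is shorter given that (3) and the dichotomies are already in hand; yours makes (4) independent of Dichotomy~II, which is precisely the alternative the paper itself flags in the remark following the proposition (one may prove (2) and (4) directly and then recover the dichotomies from them), so your argument in fact fits that suggested variant. A tiny shortcut you could note: since $(R,s)$ freely realizes $s|x$, the condition $s|x\leq\phi$ is simply $s\in\phi(_RR)$, i.e.\ $\mtx b s\in\mtx A\,{^m\!R}$, which bypasses the Presta system altogether.
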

\begin{proof}
 (1) and (2)  follows from the lemma and Dichotomy I.

Let $\phi$ be the unary formula $\mtx  A\,|\,\mtx  b x$. It is easy to see that if one of the right hand conditions is violated, $\phi$ is not low. Conversely, if $\phi$ is not low, there is $0\not= r\in \phi(_RR)$, hence $\mtx  br \in \mtx A\, {^m\!R}$. If now $\mtx  br=\br 0$, then $\rann(\mtx   b) \not= \{0\}$. If not, the first condition is violated. This proves (3). By the other dichotomy, the given formula is cobounded  iff it satisfies the negation of the condition in (3). But whether it is $\mtx  bR \, \cap\,   \mtx  A\, {^m\!R}\, = \{\bar 0\}$ or\/ $\rann(\mtx   b) = \{0\}$ that is violated, we obtain $r\in R^0$ with  $ \mtx  b r \in \mtx  A\, {^m\!R}$. Conversely, if there is  $r\in R^0$ with $ \mtx  b r \in \mtx  A\, {^m\!R}$, then depending on whether $ \mtx  b r$ is $\bar 0$ or not, the latter or the former half of the condition in (3) is violated.
 This concludes the proof of (4).
\end{proof}

Note that, just as for Lemma Presta in general, there is a ring-theoretic formula defining the situation in question. E.g., a formula $\mtx  A\,|\,\mtx  b x$ is bounded if and only if (the entries of) $\mtx  A$ and $\mtx  b$ satisfy the ring-theoretic formula   
$\exists \bar z (\bar z X \eq \bar 0 \wedge \neg\, \bar z \bar y \eq 0)$, etc. 

This shows that the existence of a unary pp formula having any combination of theses properties is a property of the complete theory of the ring, not only of the ring itself (in other words, elementarily equivalent rings possess the same combinations).

\begin{rem}
 One may prove {\rm (2)} and  {\rm (4)} above directly without appeal to the dichotomies, and instead derive the dichotomies from those descriptions. 

{\footnotesize For instance, by (1) above, if a formula $\phi$ of the form $\mtx  A\,|\,\mtx  b x$ is not bounded, then in any module $M$, for every $a\in M$ and every $\mtx  t \in {R^n}$, we have $\mtx   t \mtx  A = \bar 0$ implies $\mtx  t\, \mtx  b \, a= 0$, which by the Eklof-Sabbagh Consistency Lemma  means that the sentence $\phi(a)$ is consistent with $M$, i.e., there is $N\supseteq M$ in which it is true. If $M$ now is absolutely pure, this means that $\phi(a)$ is true in $M$. As $a\in M$ was arbitrary, $\phi(M) = M$, hence $\phi$ is high.}
\end{rem}

Next, the proposition is specialized to basic RD formulas, which allows for a number of conclusions about them and their role in the dichotomies.
Note that $0\,|\,sx\,\sim\, sx\eq 0$ and $r\,|\,0\,x\,\sim\,x\eq x$ (both  quantifier-free). The following are easily   verified.

\begin{rem}\label{T}
Let $r, s\in R$ and  $T\subseteq R$. 
\begin{enumerate}[\rm (1)]
\item There is $t\in T$ with $t|x\leq r|sx$ if and only if\/  $sT\cap rR\not= \emptyset$. 
\item If $sR^0\,\cap\, rR = \emptyset$ then $sR \cap rR = 0$.
\end{enumerate}
\end{rem}

The first part is used repeatedly (and tacitly) with $T=R^0$ below. Later it will be applied with $T=\Sl$.


\begin{cor}\label{highdiv} Let $r, s \in R$.
\begin{enumerate}[\rm (1)]
\item $r|sx$ is bounded iff\/ $\lann(r)\not\subseteq\, \lann(s)$.
\item  $r|sx$ is high iff\/ $\lann(r)\subseteq\, \lann(s)$.
\item $r|sx$ is low iff\/ $sR \cap rR = 0 = \rann(s)$.
\item $r\,|\,s x$ is cobounded iff\/  $sR^0\,\cap\, rR\not= \emptyset$.\\

\noindent
{\rm Special cases for $s=1$:} 
\item  $r|x$ is bounded iff\/ $r$ is a right zero divisor 

(iff\/ there is some $t\in R^0
$ with $tr=0$ and hence $r|x\leq tx\eq 0$).
\item  $r|x$ is high iff\/ $r\in \Sl$ 

 (iff\/ $r$ is not a right zero divisor iff\/ $r|sx$ is high for every $s\in R$.)
  \item $r|x$ is low  iff\/ $r=0$ 
  
  (iff\/ $r|x\,\sim\, x\eq 0$).
  \item $r|x$ is cobounded iff\/ $r\not=0$.\\

\noindent  
{\rm Special cases for $r=0$.} 
\item $sx\eq 0$  is bounded iff\/ $s\not=0$.
  \item $sx\eq 0$ is high iff\/ $s=0$ 
  
  (iff\/ $sx\eq 0\,\sim\, x\eq x$).
  \item $sx\eq 0$ is low iff\/ $s\in \Sr$ 
  
  (iff\/ $s$ is not a left zero divisor). 
  \item $sx\eq 0$  is cobounded iff\/ $s$ is a left zero divisor 
  
  (iff\/ there is some $t\in R^0$ with $st=0$ and hence $t|x\leq sx\eq 0$).\\ 
  
  \noindent
{{\rm Special cases for $R$, a domain: }
\item   $r|sx$ is bounded iff\/ $r= 0$ and $s\not= 0$ 

(iff\/ $r|sx\,\sim\, sx\eq 0$ with $s\not= 0$).
  \item $r|sx$ is high iff\/ $r\not= 0$ or\/ $s=0$. 
  \item  $r|sx$ is low iff\/ $s\not= 0$ and\/ $sR \cap rR  = 0$.
\item    $r|sx$ is cobounded iff\/ $s= 0$  or  $sR \cap rR  \not= 0$.}
\end{enumerate}
\end{cor}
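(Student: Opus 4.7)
The plan is to derive all sixteen items by progressively specializing the preceding proposition. The first block (1)--(4) will be read off directly from that proposition in the basic--RD case $n=m=1$, with $\mtx A=r$ and $\mtx b=s$: the sets $\mtx A\,{^m\!R}$ and $\mtx bR$ collapse to $rR$ and $sR$, and the annihilators $\lann(\mtx A)$, $\rann(\mtx b)$ collapse to $\lann(r)$, $\rann(s)$, giving the four stated criteria verbatim.

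The next two blocks will arise by further substitution into (1)--(4). For (5)--(8) I would set $s=1$ and use $\lann(1)=\rann(1)=0$, treating the alternative formulations in the parentheticals as purely linguistic restatements---``$\lann(r)\neq 0$'' becomes ``$r$ is a right zero divisor'' in (5), and the identification $r|x\sim x\eq 0$ is only needed at $r=0$ in (7). For (9)--(12) I would set $r=0$ and use $\lann(0)=\rann(0)=R$ (in any nonzero ring) together with the identification $0|sx\sim sx\eq 0$ recorded just before Remark~\ref{T}. Here $sR\cap rR=0$ is automatic, so (3) collapses to $\rann(s)=0$, giving (11), while the cobounded criterion (4) becomes ``there exists $t\in R^0$ with $st=0$'', whose witness $t$ supplies the bound $t|x\leq sx\eq 0$ promised in the parenthetical of (12).

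For the domain block (13)--(16) I will exploit that in a domain each of $\lann(a)$ and $\rann(a)$ is either $0$ (when $a\neq 0$) or $R$ (when $a=0$). Then (13) will follow from (1), since $\lann(r)\not\subseteq\lann(s)$ forces $\lann(r)=R$ and $\lann(s)\neq R$, i.e.\ $r=0\neq s$; the parenthetical is once more $0|sx\sim sx\eq 0$. Part (14) will be the Dichotomy~I negation of (13), and (15) is (3) with $\rann(s)=0$ rewritten as $s\neq 0$. The cleanest route to (16) is Dichotomy~II against (15).

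The one step calling for care---and the main obstacle I foresee---is verifying (16) directly from (4), where the two degenerate corners $r=0$ and $s=0$ must be disentangled. For $s=0$ one has $sR^0=\{0\}\subseteq rR$, so $sR^0\cap rR$ is automatically nonempty, accounting for the disjunct ``$s=0$'' in the stated criterion. For $s\neq 0$ the domain hypothesis yields $s\in\Sr$, whence $sR^0=sR\setminus\{0\}$, and then $sR^0\cap rR\neq\emptyset$ is equivalent to $sR\cap rR\neq 0$, using that $0\in sR\cap rR$ always. Beyond this small piece of bookkeeping, everything else in the corollary is transcription.
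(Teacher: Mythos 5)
Your proposal is correct and is exactly the route the paper intends: the corollary is stated without proof precisely because it is the direct specialization of the preceding proposition to $n=m=1$ (using $0|sx\sim sx\eq 0$, the behaviour of $\lann$ and $\rann$ at $0$, $1$, and in a domain, and the observation recorded in Rem.~\ref{T}(2) for the cobounded case), which is what you carry out. Your careful handling of (16), separating $s=0$ from $s\neq 0$, is the same bookkeeping the paper's Remark~\ref{T} is there to support.
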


\begin{cor}\label{RDhi-lo} Let $R$ be a domain.
\begin{enumerate}[\rm (1)]
\item Every bounded formula $r|sx$ is low and quantifierfree.
\item Every cobounded formula $r|sx$ is high.
\item $r|sx$ is high-low if and only if $r, s \not= 0$ and  $rR \cap sR  = 0$.
 \item No formula $r|sx$ is high-low if and only of $R$ is right Ore. 
\end{enumerate}
\end{cor}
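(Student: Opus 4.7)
The plan is to read off all four parts directly from Corollary~\ref{highdiv}, whose items (13)--(16) already record the ring-theoretic criteria for a basic RD formula $r|sx$ over a domain. Because $R$ has no zero divisors, $R^0 = R \setminus \{0\}$ consists entirely of regular elements, so every left or right annihilator of a nonzero element is trivial and all four conditions collapse to transparent statements about $r$, $s$, and the intersection $sR \cap rR$.

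For part (1), I invoke (13): boundedness of $r|sx$ over a domain forces $r=0$ and $s\neq 0$, and (13) itself records the equivalence $r|sx \sim sx\eq 0$, which is quantifier-free. Since $s \in R^0 = \Sr$, item (11) then yields that $sx\eq 0$ is low. For part (2), I apply (16): a cobounded $r|sx$ either has $s=0$, in which case $r|sx \sim x\eq x$ is trivially high, or satisfies $sR \cap rR \neq 0$, which forces both $r$ and $s$ nonzero; then (14) delivers highness.

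Part (3) is the conjunction of (14) and (15). Lowness gives $s \neq 0$ and $sR \cap rR = 0$; highness gives $r \neq 0$ or $s = 0$. The first eliminates the second disjunct of the latter, leaving $r, s \neq 0$ and $sR \cap rR = 0$; the converse reads the same items in the opposite direction. Part (4) is then an immediate rephrasing of (3): the nonexistence of a high-low $r|sx$ means that for every $r, s \in R^0$ one has $sR \cap rR \neq 0$, which is literally the right Ore condition on $R^0$.

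No significant obstacle is anticipated; the argument is essentially bookkeeping on top of Corollary~\ref{highdiv}. The only point requiring mild care is the degenerate case $s=0$ appearing in parts (2) and (3): here $r|sx$ unwinds to $\exists y(ry \eq 0)$, which is satisfied by $y=0$ and is therefore equivalent to $x \eq x$. Thus $r|sx$ is unambiguously high (though not low) whenever $s=0$, so the case distinctions in (2) and (3) behave as expected.
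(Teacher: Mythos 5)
Your proposal is correct and follows essentially the same route as the paper, which likewise reads all four parts off the domain specializations (13)--(16) of Cor.\,\ref{highdiv} (the paper's proof is literally ``(1): (13)$\implies$(15); (2): (16)$\implies$(14); (3): (14) \& (15); (3) yields (4)''). Your extra care with the degenerate case $s=0$ and your use of item (11) for lowness in part (1) are harmless variations of the same bookkeeping.
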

\begin{proof} (1): (13)$\implies$(15). (2): (16)$\implies$(14). (3):  (14) \& (15).
(3) yields (4).
\end{proof}

\begin{cor}\label{RD}
 No RD Ore domain has high-low formulas.
\end{cor}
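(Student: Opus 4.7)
The plan is to reduce the general assertion to the basic RD case already handled by Corollary \ref{RDhi-lo}(4), using the Ore condition to merge finitely many cobounding scalars into one. Over an RD ring, every unary pp formula is equivalent to a finite conjunction $\phi \sim \bigwedge_{i=1}^n (r_i\,|\,s_i x)$ of basic RD formulas, by \cite[Prop.2.4.10]{P2}. Recall also that for RD domains, Ore is a two-sided property, so in particular $R$ is right Ore.

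Suppose toward a contradiction that some $\phi$ is high-low. Writing $\phi$ as the conjunction above and using $\phi(N)=\bigcap_i \phi_i(N)$, the equality $\phi(N)=N$ for every injective $N$ forces each conjunct $\phi_i := (r_i\,|\,s_i x)$ to be high. Since $R$ is a right Ore domain, Corollary \ref{RDhi-lo}(4) forbids any basic RD formula from being high-low, so each high $\phi_i$ fails to be low; by Dichotomy II it must then be cobounded, yielding a witness $t_i\in R^0$ with $t_i\,|\,x \leq \phi_i$.

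The remaining step is to combine the $t_i$ into a single cobounding witness for $\phi$. By the right Ore condition on the domain $R$, applied inductively to pairs, the nonzero elements $t_1,\dots,t_n$ have a common nonzero right multiple $t\in R^0 \cap \bigcap_{i=1}^n t_i R$. Writing $t=t_i u_i$, one has $tM\subseteq t_iM$ in every module $M$, so $t\,|\,x \leq t_i\,|\,x \leq \phi_i$ for each $i$, whence $t\,|\,x \leq \phi$. Thus $\phi$ is cobounded, contradicting by Dichotomy II the assumption that $\phi$ is low. The only nontrivial step is the finite common right multiple, which is the standard consequence of the pairwise Ore condition; everything else is a direct unpacking of the two dichotomies and the basic-formula case.
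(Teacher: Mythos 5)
Your proof is correct. It uses the same reduction as the paper---write the putative high-low formula as a finite conjunction of basic RD formulas $r_i|s_ix$, note that highness passes to each conjunct, and invoke Cor.\,\ref{RDhi-lo}(4)---but the finite-conjunction step is closed differently. The paper transfers \emph{lowness} down to a single conjunct: since the subgroups $\phi_i({_RR})$ are right ideals and $R_R$ is uniform (right Ore), a low conjunction must have a low conjunct, which then is a high-low basic RD formula, contradicting (4). You instead transfer \emph{coboundedness} up: each high conjunct is not low by (4), hence cobounded by Dichotomy II, and the right Ore condition merges the cobounding scalars $t_1,\dots,t_n$ into a common nonzero right multiple $t$, so that $t|x\leq\phi$ and $\phi$ is cobounded, contradicting Dichotomy II again. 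The two uses of the Ore condition are equivalent, and both arguments are sound; the paper's is a one-liner once one observes that pp subgroups of $_RR$ are right ideals, while yours avoids that observation and, as a byproduct, explicitly shows that over such a ring every high formula lies above a nontrivial divisibility formula (compare Cor.\,\ref{highfilterFlatSharp}(2), which the paper obtains later for $(\flat\sharp)$ domains).
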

\begin{proof}
 A defining property of RD rings is that every pp formula be equivalent to a (finite) conjunction of basic RD formulas. A conjunction of (any) formulas is high if and only if every conjunct is. If $R_R$ is uniform, clearly the same holds for low (left) formulas. Consequently, over an RD ring $R$ with $R_R$ uniform, if there is a high-low formula, there is a basic RD one (on the left). Now apply (4) above.
\end{proof}

Thm.\,\ref{Ore} below extends this to all (two-sided) Ore domains. 
 Getting back to more general rings we have the following under some sort of Ore condition.

\begin{cor}\label{uniformRD}
\begin{enumerate}[\rm (1)]
\item 
Over a right uniform ring (i.e., a ring $R$ with $R_R$  uniform), 
 no basic (left) RD formula is high-low.
 \item Over a one-sided uniform RD ring, there are no high-low formulas.
 \end{enumerate}
\end{cor}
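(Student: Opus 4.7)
My approach is to treat (1) by applying the ring-theoretic criteria in Corollary \ref{highdiv} directly to basic RD formulas, and then bootstrap (2) from (1) by combining the RD hypothesis with a right-ideal argument internal to $_RR$.

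For (1), I would start with an arbitrary basic left RD formula $r\,|\,sx$ over a nonzero right uniform ring $R$. By Corollary \ref{highdiv}(3), lowness requires $sR \cap rR = 0$ together with $\rann(s) = 0$. Right uniformity of $R_R$ says any two nonzero right ideals meet nontrivially, so $sR \cap rR = 0$ forces $r = 0$ or $s = 0$. The case $s = 0$ gives $\rann(s) = R \neq 0$, contradicting lowness. The case $r = 0$ reduces $r\,|\,sx$ to $sx \eq 0$, which by Corollary \ref{highdiv}(10) is high only when $s = 0$, again clashing with $\rann(s) = 0$. Hence no basic left RD formula can be simultaneously high and low.

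For (2), suppose first that $R$ is a two-sided RD ring with $R_R$ uniform. A unary left pp formula $\phi$ is equivalent, by the RD property, to a finite conjunction $\bigwedge_i \phi_i$ of basic left RD formulas. Highness of $\phi$ passes to each conjunct, since $\phi \leq \phi_i$ and highness is preserved upward in the lattice. For lowness, the crucial observation is that each $\phi_i(_RR)$ is a right ideal of $R$: right multiplication by any ring element is an endomorphism of the left module $_RR$, and pp subgroups are preserved by endomorphisms. In a right uniform ring, a finite intersection of nonzero right ideals is again nonzero (iterate the two-fold case), so $\phi(_RR) = \bigcap_i \phi_i(_RR) = 0$ forces $\phi_j(_RR) = 0$ for some $j$. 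This $\phi_j$ is then both high and low, contradicting (1).

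For the remaining case, $_RR$ uniform, I would reduce to the previous situation via elementary duality. By Remark \ref{reformulation}(6), the existence of high-low formulas is left-right symmetric, so a high-low left formula $\phi$ yields a high-low right formula $\D\phi$. Since RD is two-sided, $\D\phi$ is equivalent to a finite conjunction of basic right RD formulas, and the argument above, now carried out in $\Lambda^1_R$ with pp subgroups of $R_R$ being left ideals and left uniformity replacing right uniformity, delivers a basic right RD high-low formula, contradicting the right-sided analog of (1). The step that requires the most care is the right-ideal claim for $\phi_i(_RR)$, which I would justify by invoking the functoriality of pp formulas (subfunctors of the forgetful functor are preserved by all module endomorphisms), applied to the right multiplications on $_RR$; everything else is a direct application of results already established in the paper.
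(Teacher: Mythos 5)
Your proposal is correct and follows essentially the same route as the paper: part (1) via the ring-theoretic criteria of Cor.\,\ref{highdiv} combined with right uniformity (you use the lowness criterion (3) directly where the paper routes through coboundedness (4) and Dichotomy II, a negligible variation), and part (2) via the decomposition into basic RD conjuncts, highness passing to conjuncts, uniformity forcing a low conjunct since the $\phi_i({_RR})$ are right ideals, and elementary duality/left-right symmetry for the left uniform case. No gaps.
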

\begin{proof}
 (1). Consider an RD formula, $r|sx$, that is high. If $s=0$, clearly, this formula is equivalent to $x\eq x$. By  Cor.\,\ref{highdiv}(10),
he same holds true if $r=0$. So we may assume, $r, s\in R^0$. If $r|sx$ is cobounded, we are done (referring to Dichotomy II). If not, by Cor.\,\ref{highdiv}(4)  and Rem.\,\ref{T}(2), $sR\cap rR=0$, contradicting right uniformity.

(2) follows from (1) by the argument at the end of proof of Cor.\,\ref{RD}, provided, $R$ is right uniform. If it is left uniform, it remains to invoke left-right symmetry  (of being RD and) of the existence of high-low formulas, cf.\,Rem.\,\ref{hilosym} below.
 \end{proof}

\begin{lem}\label{fract}
Suppose $S$ is a right denominator set in a ring $R$ and $Q$  is the corresponding right ring of fractions, $RS^{-1}$. Let $\phi$ be a (left) unary pp formula.
 \begin{enumerate}[\rm (1)]
 \item If $\phi$ is low, then $\phi(_RQ)=0$, hence $\phi$ is low also in $Q-\Mod$. 
 \item If $S$ is regular (i.e., does not contain zero divisors), the converse holds.
 \end{enumerate}
 \end{lem}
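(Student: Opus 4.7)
The plan is to show (1) by transporting witnesses for $\phi$ in $Q$ back to $R$ using the two axioms of a right denominator set (right Ore plus right reversibility), and to get (2) essentially for free once $R \hookrightarrow Q$ is injective.

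For (1), write $\phi = \exists \br y(\mtx A\br y \eq \mtx B\,x)$ with $\mtx A\in{^mR^k}$, $\mtx B\in{^mR^1}$, and assume $\phi(_RR)=0$. Given $a\in\phi(_RQ)$, pick a witness $\br y\in{^kQ}$ with $\mtx A\br y=\mtx B a$ in $Q$. By the common right denominator property of a right Ore set (applied to $a$ and the finitely many entries of $\br y$), there exists $t\in S$ such that $at$ and every entry of $\br y t$ lies in the image of $R$ in $Q$. Right multiplication by $t$ is an additive $R$-endomorphism of $_RQ$, so multiplying the equation on the right by $t$ yields $\mtx A(\br y t)=\mtx B(at)$, an identity between elements of (the image of) $R$ that still only holds in $Q$. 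Now invoke right reversibility: the entries of the difference $\mtx A(\br y t)-\mtx B(at)$ lie in the kernel of $R\to Q$, so each is killed on the right by some element of $S$; a single common $s\in S$ then witnesses $\mtx A(\br y ts)=\mtx B(ats)$ in $R$. This says $ats\in\phi(_RR)=0$, so $ats=0$ in $R$, hence in $Q$. Since $t$ and $s$ are units in $Q$, $a=0$, and thus $\phi(_RQ)=0$. Viewing $\phi$ as a $Q$-pp formula $\phi^Q$ (the same matrices read over $Q$), $\phi^Q(_QQ)=\phi(_RQ)=0$, so by Rem.\,\ref{reformulation}(1) applied to the ring $Q$, $\phi$ is low in $Q\lMod$.

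For (2), if $S$ is regular then no element of $R$ can be killed on the right by an element of $S$, so the canonical map $R\to Q$ is injective. Via this embedding, any witness for $a\in\phi(_RR)$ inside $R$ is also a witness inside $Q$, giving $\phi(_RR)\subseteq\phi(_RQ)$ literally (not merely up to the map). Therefore $\phi(_RQ)=0$ forces $\phi(_RR)=0$, establishing the converse.

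The main technical point is the clearing-of-denominators step in (1): when $S$ is not regular the map $R\to Q$ is generally not injective, so one cannot simply lift the equation in $Q$ to one in $R$. Reversibility, the second half of the right denominator condition, is exactly what rescues this and provides the further multiplier $s$ that lands the identity back in $R$. Everything else is straightforward manipulation once this lift is in hand.
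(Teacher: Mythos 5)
Your proof is correct and follows essentially the same route as the paper: write $\phi$ as $\mtx A\,|\,\mtx b x$, clear denominators with a common element of $S$ to pull a witnessed equation from $_RQ$ back into (the image of) $R$, and for (2) use that regularity of $S$ makes $R\to Q$ injective, so the existential formula $\phi$ transfers nonzero solutions from $_RR$ to $_RQ$. The only difference is that the paper asserts the truth of $\phi(ds)$ in $_RR$ immediately after the first multiplication, silently identifying $R$ with its image in $Q$, whereas you add the right-reversibility step (a further element of $S$ killing the finitely many kernel elements) needed when $R\to Q$ is not injective --- a careful elaboration of the same argument rather than a different approach.
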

 \begin{proof}
(1). Write $\phi$ as $\mtx A | \mtx b x$. If $0\not= d\in \phi(_RQ)$, let $\bar d$  witness the truth of $\phi(d)$ in $_RQ$, i.e.,  $\mtx A \br d \eq \mtx b d$. Multiply on the right by a common denominator $s\in S$ of $d$ and all the entries of $\br d$ to get the truth of $\phi(ds)$ in $_RR$. If $d=rs^{-1}\not= 0$, then $r=ds\not= 0$ as well, as desired.

(2) If $S$ is regular, $R\subseteq Q$, so, as $\phi$ is existential, $\phi(_RR)\not= 0$ implies $\phi(_RQ)\not= 0$.
\end{proof}

This will be generalized to  (right) rational  ring extensions  in Lemma \ref{Q} below.

\begin{rem}\label{10.17}
The proof of {\rm (1)} above also shows that $_RQ$ is flat (and thus solves  \cite[Ex.10.17]{LMR}):  for $d, r$ and $s$ as chosen there, we have $d= r s^{-1} \in \phi(_RR)Q$. Since $\phi$ and $d\in \phi(_RQ)$ were arbitrary,  flatness follows from {\rm Fact \ref{fact}(1)}.
\end{rem}

The rest of the paper is devoted to  the question of how the two dichotomies relate. First of all, they  yield  a subdivision of $\Lambda$ into four regions.


 
 \vspace{3em}
 
 \setlength{\unitlength}{2cm}
\begin{picture}(2,2)
  \put(0,1){\line(1,1){1}}
 \put(0,1){\line(1,-1){1}}
  \put(1,1){\line(-1,-1){.5}}
  \put(1,1){\line(-1,1){.5}}
   \put(1,1){\line(1,-1){.5}}
    \put(1,1){\line(1,1){.5}}
     \put(1,1){\line(-1,-1){.5}}
      \put(1,0){\line(1,1){1}}
      \put(1,2){\line(1,-1){1}}
      
       \put(1.3,1.7){$high$}
         \put(.3,1.7){$cobdd$}
              \put(-.15,.7){$high$}
                \put(1.8,.7){$cobdd$}
                 \put(.4,0.2){$bdd$}
                  \put(1.3,0.2){$low$}
               \put(-.1,1.2){$low$}
                  \put(1.8,1.2){$bdd$}

                   \put(1,2){\circle*{.05}}
                   \put(1,0){\circle*{.05}}
                   \put(.83,-.2){$x\dot=\, 0$}
                   \put(.83,2.1){$x\dot=\, x$}
 \end{picture}
 \vspace{2em}

The north region and the south region are, as indicated, never empty. But the east and west regions can be, as we are about to see.

\begin{rem}\label{empty}
 Note that the emptiness of  the  west region can be stated by {\em every high formula is cobounded} or by {\em every low formula is bounded}, but also by statements about their duals, like the ones that derive from {\rm Rem.\ref{reformulation}(5)}  or by {\em every high formula has a bounded dual} and by {\em every low formula has a cobounded dual}.
 Similarly for the east region.
\end{rem}

In \S\ref{dom} we show that the east region is empty precisely  when the ring under question is a domain. When the west region is empty---that  is the region of high-low formulas---seems a more elusive problem. 

\begin{rem}\label{hilosym}
 Elementary duality sends the north region on one side of the ring to the south region on the other, and vice versa. It sends the west region on one side to the west region on the other, and similarly for the east region. This implies that the figures in \S\S3 and 5  are two-sided: $_R\Lambda$ has the same shape as $\Lambda_R$ as far as the emptyness of west or east regions is concerned.
\end{rem}

A few words about types related to the four regions. Given a filter $\Phi$ and an ideal $\Psi$ in $\Lambda$, it is standard to consider the type $p_{\Phi/\Psi} := \Phi \cup \neg\Psi$---with the shorthand $\neg\Psi = \{\neg\psi\, :\, \psi\in\Psi\}$. A realization of $p_{\Phi/\Psi}$ in a module $M$ is an element $a$ that makes $p_{\Phi/\Psi}$  true, which means that $a\in \bigcap_{\phi\in\Phi} \, \phi(M) \setminus \bigcup_{\psi\in\Psi} \, \psi(M)$. The type $p_{\Phi/\Psi}$ is consistent precisely when it has such a realization (in some module). Clearly, if $\Phi$ and $\Psi$ have a formula in common, $p_{\Phi/\Psi}$ is inconsistent. The compactness theorem shows the converse:  if $\Phi\cap\Psi=\emptyset$, then $p_{\Phi/\Psi}$ is consistent. On the other hand, if $\Lambda = \Phi \cup \Psi$, it is clear that $p_{\Phi/\Psi}$ decides  every formula in $\Lambda$, i.e., $p_{\Phi/\Psi}$ is  complete. (It really is, in the technical sense of, say, \cite{H}, due
to the pp elimination in modules---a fact one may safely ignore in this study.)

\begin{rem}\label{types}
 \begin{enumerate}[\rm (1)]
 \item It is easy to see that the high formulas form a filter, denoted $\Phi_{hi}$, while the low formulas form an ideal, denoted $\Psi_{lo}$. Denote the corresponding type by $p_{hi/lo}$. It is consistent if and only if there is no high-low formula.
 \item It is not too hard to verify directly that the bounded (left) formulas form an ideal if and only if $R$ satisfies the left Ore condition\footnote{This is not the true Ore condition when the ring is not a domain.}: $Ra\cap Rb\not= 0$ for all nonzero scalars $a$ and $b$---which is to say that $_RR$ is uniform. (For necessity consider the module $R/Ra\oplus R/Rb$.) 
 In that case, $\Lambda^1$ is partitioned into $\Phi_{hi}$ and the ideal, $\Psi_{bdd}$, of bounded formulas. This yields  another consistent and complete type,   $p_{hi/bdd}$, provided $_RR$ is uniform. 
 \item Even easier is to verify that the set of cobounded (left) formulas form a filter, $\Phi_{cbdd}$,  if and only if the ring $R$ satisfies the right Ore condition, i.e., if  $R_R$ is uniform. In that case, Dichotomy II guarantees another  consistent and complete type,   $p_{cbdd/lo}$.
 \item By\/ {\rm Thm.\,\ref{Ore}} below, over a two-sided Ore domain, these three types are the same, for the simple reason that $\Phi_{hi}=\Phi_{cbdd}$ and 
 $\Psi_{lo}=\Psi_{bdd}$. (This type has been made use of in \cite{HP}, cf.\ \cite[\S8.2.1]{P2}.)
 \end{enumerate}
\end{rem}

\begin{exam}
  The uniform ring $R=\Bbb Z/4 \Bbb Z$ has (up to $\sim$) only four unary pp formulas, which form a chain: $x\eq 0 \leq 2|x \leq 2x\eq 0\leq x\eq x$. (And they are all different---consider $\Bbb Z/2 \Bbb Z \oplus \Bbb Z/4 \Bbb Z$.)
  The two in the middle are bounded and cobounded. All three types are distinct and equivalent to (isolated by) single formulas: $x\eq x \wedge \neg x\eq 0$ is equivalent to $p_{hi/lo}$ (which is, note, incomplete), $x\eq x \wedge \neg 2x\eq 0$ is equivalent to $p_{hi/bdd}$, and $2|x \wedge \neg x\eq 0$  is equivalent to $p_{cbdd/lo}$. This ring is self-injective, so possesses property $(\flat\sharp)$, to be introduced next.
\end{exam}

\section{The flat-sharp condition}\label{four}  Here we investigate rings with no high-low formulas, that is rings with empty west region as shown in the figure below, see Rem.\ref{empty} for other ways to say that.

 \vspace*{3em}
 
 \setlength{\unitlength}{2cm}
\begin{picture}(2,2)
  \put(1,2){\line(-1,-1){.5}}
   \put(1,0){\line(-1,1){.5}}
  \put(1,1){\line(-1,-1){.5}}
  \put(1,1){\line(-1,1){.5}}
   \put(1,1){\line(1,-1){.5}}
    \put(1,1){\line(1,1){.5}}
     \put(1,1){\line(-1,-1){.5}}
      \put(1,0){\line(1,1){1}}
      \put(1,2){\line(1,-1){1}}
      
       \put(1.3,1.7){$high$}
         \put(.3,1.7){$cobdd$}
                \put(1.8,.7){$cobdd$}
                 \put(.4,0.2){$bdd$}
                  \put(1.3,0.2){$low$}
                  \put(1.8,1.2){$bdd$}

                   \put(1,2){\circle*{.05}}
                   \put(1,0){\circle*{.05}}
                   \put(.83,-.2){$x\dot=\, 0$}
                   \put(.83,2.1){$x\dot=\, x$}
                   
                   \put(0.2, -.5) {\small A lattice $\Lambda$ with no high-low formulas}
 \end{picture}
 \vspace{4em}
 
 This picture is left-right symmetric,  Rem.\ref{reformulation}(6) .
 
 \begin{definition}
 Call a module  \texttt{flat-sharp} or simply $(\flat\sharp)$ if it is nonzero, flat, and absolutely pure. We use $_R(\flat\sharp)$ to express that there be a $(\flat\sharp)$ left $R$-module.\
\end{definition}

\begin{rem}\label{flatsharp}
 \begin{enumerate}[\rm (1)]
 \item Over a ring with a $(\flat\sharp)$ module (whether on the left or on the right),  there is no (left or right) high-low formula, hence every low formula is bounded and every high formula is cobounded. 
  \item If the ring $R$  is left absolutely pure, then $_RR$ is a $(\flat\sharp)$ module, so $_R(\flat\sharp)$ holds. This applies to all (von Neumann) regular rings, and many more. (Of course, over regular rings {\em all} modules are flat and absolutely pure anyway.)
\item By\/ {\rm Fact \ref{fact}}, if a pp pair opens up in a left  $(\flat\sharp)$ module, it opens up in $_RR$ and its dual opens up in $R_R$.
\item In particular, if\/ $0 \not= \phi(A)\not= A$ in a left $(\flat\sharp)$ module $A$, then $\phi$ is bounded and cobounded (as is $2|x$ in $R=\Bbb Z/4 \Bbb Z$).
\end{enumerate}
 \end{rem}

\begin{quest}
Is every ring with no high-low formula $_R(\flat\sharp)$ or $(\flat\sharp)_R$?   

Is $_R(\flat\sharp)$ equivalent to $(\flat\sharp)_R$? 
\end{quest}

\begin{rem}
 The $(\flat\sharp)$ condition is two-sided for two-sided coherent rings, as follows  from Lambek's  and W\"urfel's  theorems on character modules, {\rm Fact \ref{Wur}}. 
 \end{rem}

This symmetry will be shown also for arbitrary domains in \S\ref{dom} below, over which $(\flat\sharp)$ turns out to be equivalent to the two-sided Ore condition. Further,  the domains with no high-low formulas are exactly the $(\flat\sharp)$ domains. So, over domains, both questions have an affirmative answer.

I list some properties of the ring---in order of decreasing strength---which all  entail $_R(\flat\sharp)$ and thus the non-existence of high-low formulas. 
\vspace{3mm}

\noindent
(QF) Every injective $R$-module is projective.\vspace{1mm}

\noindent
$_R$(IF) Every injective left $R$-module is flat.\vspace{1mm}

\noindent
$_R$(H) The injective envelope (or hull) of every flat left $R$-module is flat.\vspace{1mm}

\noindent
$_R$(H$_{reg}$) The injective envelope of the ring, as a left module over itself, is flat.\vspace{1mm}

\noindent

Similarly on the right, where  the subscript $R$ is omitted altogether when  the property is required to be true on both sides. Another interesting condition is\vspace{3mm}

\noindent
(FA)$_R$ Every flat right $R$-module is absolutely pure\vspace{1mm}.

\begin{rem}\label{FA}
 \begin{enumerate}[\rm (1)]
 \item
 $_R$(IF)$\implies$(FA)$_R \implies(\flat\sharp)_R$. (The last implication is trivial, the first follows from Lambek and Würfel I and is stated in \cite[Bem.4.7]{Wu}.)
\item Thus $_R$(IF)$\implies$$(\flat\sharp)$ (the two-sided property).

This follows from (1) together with the trivial implication $_R$(IF)$\implies_R(\flat\sharp)$.
\item \cite[Prop.2.4]{GR}. If $R$ is left or right coherent, then $_R$(IF)$\iff$(FA)$_R$ and (IF)$_R$$\iff$$_R$(FA). 

For left coherent $R$, the first equivalence follows from Würfel II; for right coherent (or even just right a-coherent $R$, {\rm Def.\,\ref{a-}}) from Thm.\,\ref{Colby1} below. The second equivalence follows by symmetry.
\end{enumerate}
\end{rem}

\begin{rem}
\begin{enumerate}[\rm (1)]
\item It is well known that {\rm (QF)} is a two-sided condition equivalent to $R$ being Quasi-Frobenius.  
\item Rings with {\rm $_R$(IF)} are commonly called  \texttt{left IF rings}. 

As absolutely pure modules are pure in their injective envelope and pure submodules of flats are flat, it is clear that, moreover, all absolutely pure left modules over such a ring are flat  (see the next section for more detail). 

\item {\rm $_R$(IF)} is not a symmetric property: \cite[Example 2]{Co} is an example of a two-sided coherent ring that is absolutely pure only on one side, see also \cite[Example 5.46]{NY}. Such a ring is properly one-sided IF by\/ {\rm Thm.\,\ref{Colby1}} below.
\item The closure condition {\rm (H)} on the class of flat (left) modules was introduced  in \cite{CE} and further investigated in \cite{E}. \cite[Thm.3]{CE} characterizes  the commutative noetherian rings with {\rm (H)}. \vspace{1mm}

\item  {\rm $_R$(H$_{reg}$)} and  {\rm $_R$(H)} are equivalent  for left nonsingular rings of finite Goldie dimension, \cite[remarks after Thm.3]{CE}, and for commutative noetherian rings, \cite[Thm.]{E}; mind a misprint in condition b: over such a ring, {\rm  (H)} is equivalent to: the flat cover of every injective is\/ {\em injective}.)
\item This leads to another interesting property, which also implies $(\flat\sharp)$ (on the corresponding side):

{\rm (C)}. Every flat cover of an injective is injective.

\item In \cite[last line]{CE} it is mentioned that $k[x, y]/ (xy)$, with $k$ a field, is an example of a commutative ring with {\rm (H)} that is not a domain.  Another one is $R=\Bbb Z/4 \Bbb Z$.
\item Over a right nonsingular ring, $\E(R_R)$ carries a ring structure extending $R$, cf.\ \cite[Ch.2]{G}. Thus this module is also a left $R$-module. \cite[Thm.3.6]{G} states conditions that are equivalent to flatness of $_R\E(R_R)$. So, when the ring is commutative, these turn into conditions for {\rm (H$_{reg}$)}. In particular, a commutative nonsingular and coherent ring satisfies  {\rm (H$_{reg}$)},  \cite[Cor.3.6]{G}.
\item If a commutative noetherian ring, $R$, has property {\rm  (H)}, then so do $R[x]$, $R[[x]]$ and $S^{-1}R$ for every multiplicative set $S\subset R$, \cite[Prop.]{E}.
\end{enumerate}
\end{rem}

\begin{quest}
 The hereditary nature of (H) exhibited in the previous item  raises the same questions for $(\flat\sharp)$. 
\end{quest}

\section{Some classes of rings}
\subsection{IF rings}\label{IF} We continue the study of this special class of rings over which there are no high-low formulas. Würfel proved the equivalence of (1), (3) and a version of (7) (with almost coherence instead of a-coherence) below, \cite[Satz 4.3]{Wu}, and 
Colby \cite[Thm.1]{Co} proved  the equivalence of (1) through (4)  and of right $\aleph_0$-injectivity and another weak form of right coherence. Colby found himself unable to find a left IF ring that was not outright right coherent, \cite[end of \S 2]{Co}. I tried to proof that there is no such ring, but wasn't able to do that either. Nevertheless I add some conditions that seem to get closer to doing so and may be of interest in their own right---pp conditions characterizing $_R$(IF). For the convenience of the reader, I include a complete proof, of which the brief (2)$\implies$(3) and 
 (4)$\implies$(1) are duplicated from Colby's paper. The proof of (4)$\implies$(7) employs the idea of proof of Würfel II given in \cite[Thm.4.4]{PRZ}.

\begin{definition}\label{a-}
Call a ring $R$ \texttt{right a-coherent} if the right ideal $I:= \phi(_RR)$ has left annihilator $\lann(I)=\lann(I_0)$ for some fg right subideal $I_0\subseteq I$, for every unary left pp formula  $\phi$.
\end{definition}

\begin{rem}
 The prefix \texttt{a-} stands for `annihilator.' The notion is a weakening of right coherence: a ring is right coherent ring iff every  pp definable  right ideal is finitely generated, \cite[Thm.2.3.19]{P2}, so that, moreover, $I=I_0$.
\end{rem}

\begin{thm}\label{Colby1} The following are equivalent for any ring $R$.
 \begin{enumerate}[\rm (i)]
 \item $_R$(IF), i.e., every injective left $R$-module is flat.
\item The injective envelope of every fp left $R$-module is flat. 
\item Every fp left $R$-module embeds into a free left $R$-module.
\item The character module of every free right $R$-module is a flat left $R$-module.
\item Every left pp formula $\phi$ is $_R\sharp$-equivalent to a divisibility formula $\mtx A\, |\, \br x$ below $\phi$ (in $_R\Lambda$).
\item Every right pp formula $\psi$ is $\flat_R$-equivalent to an annihilation  formula $\br x\, \mtx A \eq \br 0$ above $\psi$ (in $\Lambda_R$).
\item $R$ is right absolutely pure and right a-coherent.
\end{enumerate}
It suffices to consider unary pp formulas in the above.
\end{thm}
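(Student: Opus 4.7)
The strategy is to prove the classical Colby cycle (i)$\Rightarrow$(ii)$\Rightarrow$(iii)$\Rightarrow$(iv)$\Rightarrow$(i) first, then fold in the new pp-theoretic conditions (v), (vi), (vii) by invoking Fact \ref{fact} and elementary duality. The reduction to unary pp formulas asserted at the end will come out of the pp-side arguments, since each step of (v)$\Leftrightarrow$(vi)$\Leftrightarrow$(i) only ever quotes the Zimmermann/absolute-purity characterizations of Fact \ref{fact} and the scalar-style divisibility/annihilation data, both of which are unary.

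\textbf{The classical cycle.} Step (i)$\Rightarrow$(ii) is immediate: the injective envelope is injective, hence flat by (i). For (ii)$\Rightarrow$(iii), given an fp module $M$, its envelope $\E(M)$ is flat by (ii), hence by Lazard a directed colimit of finitely generated free modules; since $M$ is fp, the inclusion $M\hookrightarrow \E(M)$ factors through one of these frees. For (iii)$\Rightarrow$(iv) the object of interest is $(R^{(X)})^+\cong (R^+)^X$; here I would verify Zimmermann's criterion $\phi((R^+)^X)=\phi({}_RR)(R^+)^X$ for every unary left pp $\phi$, using that fp commutes with products (so that tensoring an fp right module against $(R^+)^X$ reduces to a product of characters) together with the fact that (iii) makes every fp module a submodule of a free, which controls the relevant $\Tor$-vanishing. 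Finally, (iv)$\Rightarrow$(i): given $E$ left injective, $E^+$ is flat (Lambek), so by Lazard $E^+$ is a pure quotient of a free right module $R^{(X)}$; taking characters sends pure surjections to split injections, so $E^{++}$ is a summand of $(R^{(X)})^+$, which is flat by (iv); since $E$ is injective, its canonical pure embedding into $E^{++}$ splits and $E$ is a summand of a flat, hence flat.

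\textbf{The pp-theoretic conditions.} For (i)$\Rightarrow$(v), apply Fact \ref{fact}(2) and (1) inside an injective (=absolutely pure, hence by (i) also flat) module $M$ to obtain $\phi(M)=\ann_M\D\phi(R_R)=\phi({}_RR)\cdot M$, and then use (vii)'s right a-coherence (established concurrently below) to replace $\phi({}_RR)$ by a finitely generated right subideal, yielding the divisibility formula $\mtx A\,|\,\br x$ below $\phi$. For (v)$\Rightarrow$(i) the computation is clean: in any injective $E$ one has $\phi(E)=(\mtx A\,|\,\br x)(E)=\sum a_iE\subseteq \phi({}_RR)\cdot E\subseteq \phi(E)$, so Zimmermann's criterion holds. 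Equivalence (v)$\Leftrightarrow$(vi) is immediate from elementary duality: $\D$ exchanges divisibility and annihilation formulas and, by Fact \ref{fact}(3), exchanges the definable closures of ${}_R\sharp$ and $\flat_R$. For (i)$\Rightarrow$(vii), right absolute purity follows because $(R_R)^+$ is left injective by Lambek, hence flat by (i), hence $R_R$ is absolutely pure by Würfel I (Fact \ref{Wur}(2)); right a-coherence follows from the identification $\D\phi(R_R)=\lann(\phi({}_RR))$ available once $R_R\in\sharp_R$ (via Fact \ref{fact}(2) applied to $\D\phi$ on the right side) combined with the divisibility replacement from (v). Conversely, (vii)$\Rightarrow$(v) is exactly the Würfel-II-style maneuver of \cite[Thm.\,4.4]{PRZ}: a-coherence provides the fg candidate $I_0$, and right absolute purity of $R_R$ lets one compare annihilators in the ring with evaluations of dual formulas, forcing the divisibility formula defined by $I_0$ to be $\sharp$-equivalent to $\phi$.

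\textbf{Expected obstacle.} The main difficulty is the interface between (vii) and (v)—specifically, verifying that right a-coherence together with right absolute purity of $R_R$ really forces the ring-theoretic equality $\lann(\phi({}_RR))=\lann(I_0)$ to upgrade to an $\sharp$-equivalence $\phi\sim_\sharp(\mtx A\,|\,\br x)$. The Würfel-II argument handles this when $R$ is left coherent, and the adaptation to the a-coherent setting (where one only has control over annihilators of pp-definable right ideals, not the ideals themselves) is where the proof must be most careful; in particular, it is exactly here that the restriction to unary formulas becomes essential, since $\phi({}_RR)$ is only a \emph{right ideal}—not a submodule of $R^n$—for unary $\phi$, and this is what makes the notion of a-coherence applicable.
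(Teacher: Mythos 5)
Your proposal is circular at the crucial point where the pp-theoretic conditions are tied to the IF conditions. Your (i)$\Rightarrow$(v) step extracts the finitely generated subideal ``using (vii)'s right a-coherence (established concurrently below),'' while your (i)$\Rightarrow$(vii) step obtains a-coherence ``combined with the divisibility replacement from (v).'' So neither (v) nor (vii) is ever reached from (i)--(iv) by a non-circular argument, and the equivalence is not closed. You also misplace the expected difficulty: (vii)$\Rightarrow$(v)/(vi) is the easy direction (for absolutely pure $M$, Fact \ref{fact}(2) gives $\phi(M)=\ann_M\D\phi(R_R)=\ann_M\lann(I)=\ann_M\lann(I_0)$, which is exactly the evaluation of the dual of the divisibility formula attached to $I_0$); the real work is entering $\{$(v),(vi),(vii)$\}$ from the IF side. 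The paper does this twice over, and you need at least one such argument: either (3)$\Rightarrow$(5) directly --- if $\phi$ is freely realized by $\br a$ in a f.p.\ $P$ and $P$ embeds in a free $F$, then in $F$ the tuple $\br a$ freely realizes a divisibility formula $\psi\leq\phi$ (\cite[Lemma 1.2.29]{P2}), and injectivity of any test module $E$ lets the map $(P,\br a)\to(E,\br c)$ extend over $F$, giving $\sharp$-equivalence --- or the direct proof of (4)$\Rightarrow$(7), a W\"urfel-II-style argument (\`a la \cite[Thm.4.4]{PRZ}) building a character on $R_R^{(\omega)}$ from a putative strictly descending chain $\lann(I_n)$ and contradicting flatness of the character module. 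Neither appears in your proposal.

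There is a second, smaller gap in your (iii)$\Rightarrow$(iv): reducing flatness of $(R^{(X)})^+\cong (R^+)^X$ to flatness of $R^+$ via ``fp commutes with products'' is precisely the move that needs coherence (closure of flats under products), which is not available --- indeed whether a left IF ring must be right coherent is Colby's open Question \ref{Colbyquest}; moreover $\phi({_RR})\cdot\prod_i N_i$ need not equal $\prod_i \phi({_RR})N_i$ when $\phi({_RR})$ is not finitely generated, which is the whole point at issue. The paper argues instead with an arbitrary $b\in\phi(F^+)$, a free realization $(P,a)$ of $\phi$, the embedding $P\subseteq L$ (free) from (iii), injectivity of $F^+$ (Lambek) to extend the map, and flatness of $L$. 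Your (iv)$\Rightarrow$(i) is correct but overbuilt: ``$E^+$ is flat (Lambek)'' is true but is not Lambek's theorem, and is unnecessary --- since $E$ is injective, the chain $E\hookrightarrow E^{++}\hookrightarrow F^+$ with $F\twoheadrightarrow E^+$ free already splits $E$ off the flat module $F^+$. The classical cycle (i)$\Rightarrow$(ii)$\Rightarrow$(iii), the implication (v)$\Rightarrow$(i), and the duality (v)$\Leftrightarrow$(vi) are fine and agree with the paper.
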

\begin{proof}
 (1)$\implies$(2) is trivial. 
 
 \noindent
 (2)$\implies$(3). Let $P\in \Rmod$. By (2), $\E(P)$ is flat. Then, by Lazard's Theorem, the embedding $i: P \to \E(P)$ factors through a free module $F$, i.e., there are $j: P \to F$ and $k: F\to \E(P)$ with $kj = i$. But as $i$ is an embedding, so is $j$, which shows (3). 
 
  \noindent
  (3)$\implies$(4). Let now $F$ be a free right $R$ module. To prove, $F^+$ is flat, we show that, given a unary left pp formula $\phi$, we have $\phi(F^+)=\phi(_RR)F^+$. Only the inclusion from left to right is nontrivial. So consider $b\in \phi(F^+)$ and  pick a fp free realization $(P, a)$ of $\phi$. Then there is a map $f: (P, a)\to (F^+, b)$, and by  (3), $P$ is contained in a free left $R$-module $L$. 
  As $F$ is flat, $(F^+)$ is injective, by Lambek's Theorem. Hence $f$ factors through $L$, this yielding a map $g: L\to (F^+)$ with $a \mapsto b$. Note that also $a\in \phi(L)$, whence, by flatness of $L$, this element is in $\phi(_RR)L$. Application of $g$ shows $b\in \phi(_RR)F^+$, as desired.
  
   \noindent
   (4)$\implies$(1).  Let $M$ be an injective left $R$-module. Write $M^+$ as an epimorphic image of a free right $R$-module $F$. It is easy to check that this yields an embedding of $M^{++}$ into $F^+$. Thus $M$ is embedded in  $F^+$, hence splits off by its choice.  $F^+$ being flat by (4), this implies that so is $M$.
   
Thus (1) through (4) are equivalent. So are (5) and (6) (by elementary duality). (5)$\implies$(1) is straightforward: let $\phi$ be unary and $_R\sharp$-equivalent to a divisibility formula $\psi:= \mtx b\, |\,  x$ below $\phi$ and let $E\in \RMod$ be injective; to prove $\phi(E)\subseteq \phi(_RR)E$ it suffices to verify $\psi(E)\subseteq \psi(_RR)E$ (for $\psi\leq\phi$), which is obvious, as $\mtx b\, | \,  \mtx b$ in $_RR$. So for (1) through (6) to be equivalent it remains to prove

 \noindent
   (3)$\implies$(5), which can be done locally thus. Suppose $P\in\Rmod$ is a submodule of a free left $R$-module $F$. We are going to show that every pp formula $\phi=\phi(\br x)$ freely realized in $P$ is $_R\sharp$-equivalent to a divisibility formula $\psi := \mtx A\, |\, \br x$ with $\psi\leq\phi$. Let $\phi$ be freely realized by a tuple $\br a$ in $P$. Being existential, $\phi$ is also realized by  $\br a$ in $F$. By \cite[Lemma 1.2.29]{P2}, in the free module $F$,  this tuple freely realizes a divisibility formula $\psi$ as above. Then  $\psi$ is below $\phi$ and it remains to verify that it is $_R\sharp$-implied by $\phi$.  So let $\br c\in \phi(E)$ with $E$ injective. By choice of $\phi$, there is a map $f: (P, \br a)\to (E, \br c)$, which, by injectivity, can be extended to $g: (F, \br a) \to  (E, \br c)$, whence $\br c\in \psi(E)$, as desired. 
   
   Note, we used (5) (or (6)) only for unary $\phi$ in the previous argument, which confirms the extra clause in the theorem (the same can be said about the remaining arguments). To prove that any of these equivalent conditions imply the remaining ones, first note that, by Rem.\ref{FA}(1), any left IF ring is right absolutely pure.
   
   Next suppose (4) (and (1)) holds and assume (7) does not. Let us, just for simplicity, first assume that $I$ is countably generated, by $r_0, r_1, \ldots$, say. By assumption w.l.o.g. there are elements $r_n\in I$ $(n<\omega)$ such that the left annihilators of $I_n=\{r_i\, : \, i<n\}$ (with $n>0$) form a properly descending chain (above $\lann(I)$). Pick $a_n\in \lann(I_n)\setminus\lann(I_{n+1})$ and a character $g_n\in (R_R)^+$ with $\lann(I_{n+1})\, g_n=0$, but $a_n\, g_n \not= 0$. Let $F= R_R^{(\omega)}$ the free right $R$-module of rank $\omega$. As $F^+ = ((R_R)^+)^\omega$, the map $g=\langle g_n \rangle$ is a character on $F$ with $\ann_F(I)\,g=0$ (for $\ann_F(I)= \lann(I)^{(\omega)}\subseteq F$). As $R$ is right absolutely pure and $I:= \phi(_RR)$, we have $\D\phi(R_R) = \lann(I)$, hence all $\D\phi(R_R)\, g_n =0$ in $R_R^+$ and thus $\D\phi(R_R)\, g =0$ in $F^+$,  which shows that $g\in \phi(F^+)$. Invoking (4), 
  $g$ is a linear combination $\sum s^j f^j$ with $s^j\in I$ and $f^j\in F^+$. All these finitely many $s^j$ are contained in  $I_N$ for some $N<\omega$. This leads to a contradiction as follows. 
  
  Let $c_n$ be the element of $F$ whose $n$th coordinate is $a_n$, while all the others are $0$. Then $c_n\, g = a_n \, g_n \not= 0$ for all $n$.  In contrast, $c_N\,g = a_N g_N= a_N (s^j f^j_N)$ (where the latter is the restriction of $f^j$ to the $N$th coordinate of $F$), which is $0$, since $a_N\, s^j \in a_N\,I_N =0$, by the choice of  $a_N$. This contradiction completes the proof of (4)$\implies$(7) for countably generated $I$.  
If the generators of $I$ are indexed by some uncountable cardinal $\alpha$, simply replace $\omega$ by $\alpha$ everywhere (and work with $F$ of rank $\alpha$ instead).  

 \noindent   
(7)$\implies$(6).  It suffices to prove this for unary pp formulas (see above). Every unary right pp formula is (equivalent to one) of the form $\D\phi$ for some unary left pp formula $\phi$. Setting $I= \phi(_RR)$ again,  we have, as before, that $\lann(I)=\D\phi(R_R)$, for $R_R$ is absolutely pure by hypothesis. Then, by (7), $\D\phi(R_R)=\lann(I_n)$ of a fg subideal $I_n$ of $I$. Let $\psi$ be the unary annihilation formula that defines $\lann(I_n)$ in $R_R$. Then $\D\phi$ is equivalent to $\psi$ in $R_R$. But for every flat right module $F$, we have $\D\phi(F)= F\,\D\phi(R_R)= F\,\psi(R_R) =\psi(F)$, hence the two formulas are equivalent even in all flat right $R$-modules, as desired.
\end{proof}

\begin{quest}[Colby]\label{Colbyquest}
 Is there  a left IF ring that is not right coherent?
\end{quest}

Combining right a-coherence with right $\aleph_0$-injectivity, we obtain a pp version of \cite[Cor.1]{Co}.

\begin{prop}\label{double}
 Suppose $R$ is right $\aleph_0$-injective and right a-coherent.
 \begin{enumerate}[\rm (1)] 
  \item If $A$ and $B$ are right ideals that are pp definable in $_RR$, then\\
    $\mtx l(A\cap B) = \mtx l(A) + \mtx l(B)$.
      \item $\mtx l\mtx r (C)=C$, for every f.g.\ left ideal $C$.
    \end{enumerate}
\end{prop}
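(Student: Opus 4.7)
The plan is to establish (1) directly by a map-extension argument combining both hypotheses, and to deduce (2) from (1) by induction whose base case is the classical cyclic $\lann\rann$ identity.

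For (1), the inclusion $\lann(A)+\lann(B)\subseteq\lann(A\cap B)$ is immediate, so the work lies in the reverse. I would fix $a\in\lann(A\cap B)$ and invoke right a-coherence to pick f.g.\ right subideals $A_0\subseteq A$ and $B_0\subseteq B$ with $\lann(A_0)=\lann(A)$ and $\lann(B_0)=\lann(B)$. Glue ``multiplication by $a$ on the $A$-summand'' into a single right $R$-module map $f\colon A_0+B_0\to R_R$ via $f(x+y):=ax$ for $x\in A_0$, $y\in B_0$; this is well-defined because any ambiguity $x_1+y_1=x_2+y_2$ yields $x_1-x_2\in A_0\cap B_0\subseteq A\cap B$, which is annihilated on the left by $a$. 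Since $A_0+B_0$ is a f.g.\ right ideal, right $\aleph_0$-injectivity (Baer's criterion for f.g.\ right ideals) extends $f$ to left multiplication by some $b\in R$. Evaluating $\tilde f$ on $A_0$ gives $bx=ax$, so $a-b\in\lann(A_0)=\lann(A)$; evaluating on $B_0$ gives $by=0$, so $b\in\lann(B_0)=\lann(B)$. Hence $a=(a-b)+b\in\lann(A)+\lann(B)$.

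For (2), write $C=Rc_1+\cdots+Rc_n$ and note $\rann(C)=\bigcap_{i=1}^n\rann(c_i)$. Each $\rann(c_i)$ is pp-definable in $_RR$ by the quantifier-free left pp formula $c_ix\eq 0$, and partial intersections $\bigcap_{i\leq k}\rann(c_i)$ remain pp-definable by conjunction. I would induct on $n$. The base case $n=1$ is the standard identity $\lann\rann(c)=Rc$: given $a\in\lann\rann(c)$, the map $cr\mapsto ar$ from $cR$ to $R_R$ is well-defined and extends by $\aleph_0$-injectivity (in fact already by principal injectivity) to left multiplication by some $b\in R$, whence $a=bc\in Rc$. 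For the inductive step, set $C'=Rc_1+\cdots+Rc_{n-1}$, so $\rann(C)=\rann(C')\cap\rann(c_n)$ is a meet of pp-definable right ideals to which (1) applies; combined with the inductive hypothesis this gives $\lann\rann(C)=\lann\rann(C')+\lann\rann(c_n)=C'+Rc_n=C$.

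The main subtlety is the calibration of the two hypotheses in (1): the pp-definable ideals $A,B$ are not a priori f.g., so $A+B$ need not be either, and right $\aleph_0$-injectivity alone is insufficient to extend $f$ directly off of $A+B$. Right a-coherence is exactly the tool that lets us shrink to f.g.\ subideals $A_0,B_0$ while preserving the left-annihilator information; once this reduction is in place, the Baer-style extension argument proceeds routinely.
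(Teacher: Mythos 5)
Your proof is correct and follows essentially the same route as the paper: part (1) is the a-coherence reduction to f.g.\ subideals $A_0,B_0$ combined with the Baer-style extension argument (which the paper obtains by citing the f.g.\ adaptation of Kasch's Thm.\ 12.4.2(a) rather than writing it out), and part (2) is the principal-ideal $\lann\rann$ identity from $P$-injectivity followed by iterated application of (1) to the pp-definable right ideals $\rann(Rc_i)$. The only difference is that you inline the cited arguments explicitly, which changes nothing in substance.
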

    Note that $\mtx l(A\cap B)$ is   equal to $\mtx l(C_0)$ for some f.g.\ right ideal $C_0\subseteq A\cap B$, for $A\cap B$ is also pp definable.

\begin{proof} 
\cite[Thm.12.4.2(a)]{K} states (1) for arbitrary right ideals in a right injective ring $R$.  It is easy to see that its proof yields the same for f.g.\ ideals in a right  $\aleph_0$-injective ring $R$. Applying this to 
 f.g.\ right ideals $A_0\subseteq A$ and $B_0\subseteq B$ with $\mtx l(A) = \mtx l(A_0)$ and  $\mtx l(B) = \mtx l(B_0)$ (which exist by a-coherence), we have $\mtx l(A\cap B) \subseteq \mtx l(A_0\cap B_0) = \mtx l(A_0) + \mtx l(B_0) = \mtx l(A) + \mtx l(B) \subseteq \mtx l(A\cap B)$, so all are equal as desired.
 
It is well known that (2) for $C = Rc$ (principal left ideal) is equivalent to right $P$-injectivity of $R$, see for example [ibid] or \cite[3.17]{LMR} or \cite[Lemma 5.1]{NY}. So we certainly have that in the right $\aleph_0$-injective ring $R$. To extend this to arbitrary f.g.\ left ideals $C= \sum Rc_i$, first write $\mtx r (C) = \bigcap \mtx r (Rc_i)$ and then notice that each $\mtx r (Rc_i)$ is a right pp definable ideal, so that iterated application of (1) concludes the proof of (2).
\end{proof}

\begin{rem}\label{K}
 The proof of \cite[Thm.12.4.2(b)]{K} shows that {\rm (2)} together with {\rm (1)} for f.g.\ right ideals imply right $\aleph_0$-injectivity of $R$.
\end{rem}

\begin{quest}\label{Kasch}
 Does right $\aleph_0$-injectivity and right a-coherence together  imply right absolute purity of $R$ (and thus  $_R$(IF))? 
\end{quest}

(This would follow from the theorem and Colby's theorem if right a-coherence implied right weak coherence of \cite[Thm.1(6)]{Co}.)\\

One partial answer is given next. For another one, see \S\ref{a-coh}.

\begin{cor}\label{abspure}

 \begin{enumerate}[\rm (1)] 
\item Suppose  {\rm (H$_{reg}$)}$_R$  holds (i.e., $\E(R_R)$ is flat).

\noindent
 Then, if $R$ is right $\aleph_0$-injective and right a-coherent,  $R$ is right absolutely pure (hence left IF).
\item If $R$ is right $\aleph_0$-injective and two-sided a-coherent, then $R$ is left coherent\\ (hence "$\aleph_0$-injective=absolutely pure" in $\RMod$).
     \end{enumerate}
\end{cor}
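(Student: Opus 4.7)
The plan is to exploit Proposition \ref{double}(2), the identity $\mtx l\rann(C)=C$ for every finitely generated left ideal $C$, which is available in both parts by the shared hypotheses of right $\aleph_0$-injectivity plus right a-coherence.

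For part (1), I would verify right absolute purity of $R$ in its pp form: for every unary right pp formula $\psi$, the pp subgroup $\psi(R_R)$ should equal $R\cap\psi(\E)$, where $\E:=\E(R_R)$. The hypothesis {\rm (H$_{reg}$)}$_R$ says $\E$ is flat, so Fact \ref{fact}(1) yields $\psi(\E)=\E\cdot\psi(R_R)$, and the desired identity becomes $\psi(R_R)=R\cap\E\cdot\psi(R_R)$. Writing the left ideal $\psi(R_R)$ as the directed union of its finitely generated left subideals $I_\alpha$, one has $\E\cdot\psi(R_R)=\bigcup_\alpha\E I_\alpha$, so it suffices to show $R\cap\E I=I$ for each finitely generated left ideal $I$. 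For such $I$, Proposition \ref{double}(2) gives $I=\mtx l(J)$ with $J:=\rann(I)$. Given $r=\sum e_ij_i\in R\cap\E I$ with $e_i\in\E$ and $j_i\in I$, for every $t\in J$ we have $j_it=0$ (since $j_i\in I=\mtx l(J)$), hence $rt=\sum e_i(j_it)=0$, placing $r\in\mtx l(J)=I$. Absolute purity of $R_R$ follows; combined with right a-coherence, Theorem \ref{Colby1} delivers the left IF property.

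For part (2), I plan to show directly that every left ideal of the form $\psi(R_R)$, with $\psi$ a unary right pp formula, is finitely generated as a left ideal; via the unary Chase--Prest coherence characterization, this is equivalent to left coherence of $R$. By the left half of the two-sided a-coherence there is a finitely generated left subideal $J_0\subseteq\psi(R_R)$ with $\rann(J_0)=\rann(\psi(R_R))$. Applying Proposition \ref{double}(2) to the finitely generated left ideal $J_0$ yields $\mtx l\rann(J_0)=J_0$, and the Galois inclusion $\psi(R_R)\subseteq\mtx l\rann(\psi(R_R))=\mtx l\rann(J_0)=J_0\subseteq\psi(R_R)$ collapses to $\psi(R_R)=J_0$, which is finitely generated. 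The parenthetical equivalence between $\aleph_0$-injectivity and absolute purity in $\RMod$ then follows from \cite[Prop.3.23]{ES}.

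The main conceptual obstacle is identifying the correct alternative characterization on each side. In (1), a direct attempt to extend homomorphisms out of finitely generated submodules of finitely presented right modules is frustrated because $\aleph_0$-injectivity of $R_R$ only controls extensions out of countably generated right ideals of $R$; the route through Fact \ref{fact}(1) and {\rm (H$_{reg}$)}$_R$ converts absolute purity into a statement about left-ideal annihilators that lies squarely in the domain of Proposition \ref{double}(2). In (2), right a-coherence alone does not visibly produce $\rann(a)$ as a finitely generated right ideal (which would appear to require the dual of Proposition \ref{double}(2), hence left $\aleph_0$-injectivity), so the symmetric ingredient of left a-coherence must be invoked to collapse the pp-definable left ideals onto their finitely generated surrogates.
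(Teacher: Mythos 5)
Your proposal is correct and follows essentially the paper's own route: in (1) flatness of $\E(R_R)$ turns membership of $r\in R\cap\psi(\E(R_R))$ into $r=\sum e_ij_i$ with the $j_i$ generating a finitely generated left subideal of $\psi(R_R)$, and Prop.~\ref{double}(2) ($\mtx l\rann(C)=C$) pulls $r$ back into that ideal; in (2) left a-coherence plus the same identity collapses each pp-definable left ideal onto a finitely generated subideal, giving left coherence via the coherence criterion. Only the packaging (directed-union phrasing in (1)) differs from the paper's element-by-element argument.
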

\begin{proof}
 (1). All we need to show is that $R_R$ is pure in $E:=\E(R_R)$, i.e., $R\,\cap\, \phi(E)\subseteq \phi(R_R)$ for every right unary pp formula $\phi$. By flatness, $\phi(E) = E\, \phi(R_R)$, so every $r\in R\cap\phi(E)$ can be written as $r=\sum e_i\, r_i$ with $e_i\in E$ and $r_i\in \phi(R_R)$. Let $C$ be the left ideal generated by those $r_i$. Then $\mtx l\mtx r (C)=C$ by (2). But clearly $r$ is contained in the left hand side, so $r\in C\subseteq \phi(R_R)$.
 
 (2). By \cite[Thm.2.3.19]{P2} it suffices to verify that every left ideal which is pp definable in $R_R$ is f.g. Let $C$ be such an ideal. By left a-coherence $\mtx r (C)= \mtx r (C_0)$ for some f.g.\  left subideal $C_0\subseteq C$. By Prop.\ref{double}(2) above, $C\subseteq \mtx l\mtx r (C)= \mtx l\mtx r (C_0)= C_0 \subseteq C$, so all are equal.
\end{proof}

We can now derive the characterization of two-sided  IF rings  from the one-sided theorem by means of the proposition (or (1) of the previous corollary). (i)$\iff$(ii) below is part of \cite[Thm.2]{Co}, but all of it---except  for the minor variant (iii), which, recall, implies (ii) under coherence---is already contained in \cite[Folgerung 4.6]{Wu}, who attributes it to \cite[Th.2]{Sab}. (A precursor, assuming coherence at the outset, is \cite[Prop.4.2]{Ste}.) 

\begin{cor}[Sabbagh]
 The following are equivalent for any ring $R$.
 \begin{enumerate}[\rm (i)]
 \item $R$  is (two-sided) IF.
  \item $R$ is (two-sided)  absolutely pure and coherent.
 \item $R$ is (two-sided) $\aleph_0$-injective and coherent.
 \item $_R${\rm (IF)} and  $_R${\rm (FA)}, i.e., $_R\flat = {_R\sharp}$.
 \item {\rm (IF)}$_R$ and  {\rm (FA)}$_R$, i.e., $\flat_R = \sharp_R$.
     \end{enumerate}
\end{cor}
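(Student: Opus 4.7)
The plan is to reduce everything to the one-sided Thm.\,\ref{Colby1} and Cor.\,\ref{abspure}(2). I would first establish the chain $(i) \Leftrightarrow (ii) \Leftrightarrow (iii)$ and then deduce $(i) \Leftrightarrow (iv)$ and $(i) \Leftrightarrow (v)$ by combining Rem.\,\ref{FA}(3) with a direct argument that each of $(iv)$ and $(v)$ already forces right, respectively left, coherence.

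For $(i) \Rightarrow (ii)$: two-sided IF together with Thm.\,\ref{Colby1} makes $R$ two-sided absolutely pure and two-sided a-coherent; since absolute purity implies $\aleph_0$-injectivity on each side, Cor.\,\ref{abspure}(2) and its left-right mirror promote two-sided a-coherence to two-sided coherence. Conversely, $(ii) \Rightarrow (i)$ because coherence is stronger than a-coherence, so Thm.\,\ref{Colby1} on each side gives two-sided IF. The equivalence $(ii) \Leftrightarrow (iii)$ follows from the fact recorded in the Preliminaries that over a one-sidedly coherent ring, $\aleph_0$-injective modules on that side coincide with absolutely pure ones.

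For $(i) \Rightarrow (iv), (v)$, $(ii)$ supplies two-sided coherence, so Rem.\,\ref{FA}(3) yields $_R$(IF) $\Leftrightarrow$ (FA)$_R$ and (IF)$_R$ $\Leftrightarrow$ $_R$(FA); both halves of $(i)$ then give all four conditions.

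The main point is $(iv) \Rightarrow (i)$ (with $(v) \Rightarrow (i)$ entirely symmetric): I claim that $(iv)$ alone forces right coherence. From $_R$(IF) and clause (v) of Thm.\,\ref{Colby1}, every unary left pp formula $\phi$ is $_R\sharp$-equivalent to a divisibility formula $\psi = (\mtx A \,|\, x)$ below $\phi$; under $_R$(FA) one has $_R\flat \subseteq {_R\sharp}$, so this equivalence also holds in $_RR$, making the right ideal $\phi(_RR) = \psi(_RR)$ finitely generated (by the entries of $\mtx A$). Hence every pp-definable right ideal of $R$ is f.g., and $R$ is right coherent by \cite[Thm.2.3.19]{P2}. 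Rem.\,\ref{FA}(3) then converts $_R$(FA) into (IF)$_R$, which together with $_R$(IF) is $(i)$. I expect no genuine obstacle beyond the bookkeeping of left/right parity across the several invocations of Thm.\,\ref{Colby1} and Rem.\,\ref{FA}(3).
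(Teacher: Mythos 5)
Your proof is correct, and its overall skeleton matches the paper's: both reduce everything to Thm.\,\ref{Colby1}, get (ii)$\iff$(iii) from the coherent case of ``$\aleph_0$-injective = absolutely pure'' (\cite[Prop.3.23]{ES}), and use Cor.\,\ref{abspure}(2) (with its mirror) to promote the two-sided a-coherence coming out of Thm.\,\ref{Colby1}(7) to genuine coherence for (i)$\implies$(ii). The genuine divergence is at the crux (iv)$\implies$(i): the paper derives right coherence from (iv) by Chase's theorem --- under $_R\flat={_R\sharp}$ a product of flats is a product of absolutely pure modules, hence absolutely pure, hence flat again --- and then applies Rem.\,\ref{FA}(3) to get (iv)$\implies$(v), hence (i); you instead extract right coherence pp-theoretically from Thm.\,\ref{Colby1}(v): since $_R$(FA) makes $_RR$ absolutely pure, every pp-definable right ideal $\phi(_RR)$ coincides with the value of a divisibility formula $\mtx A\,|\,x$, hence is generated by the finitely many entries of $\mtx A$, and \cite[Thm.2.3.19]{P2} gives coherence, after which Rem.\,\ref{FA}(3) works exactly as in the paper. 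Both arguments are sound; yours stays inside the pp machinery and exploits clause (v) of the theorem, which the paper's proof never uses here, while the paper's Chase argument is shorter and needs no formula manipulation. One small economy you miss: for (i)$\implies$(iv),(v) no coherence is needed at all --- Rem.\,\ref{FA}(1) gives (FA)$_R$ and $_R$(FA) directly from the two halves of (i), together with the observation (already in the paper's remark on IF rings) that $_R$(IF) forces absolutely pure left modules to be flat; your detour through (ii) and Rem.\,\ref{FA}(3) is correct but redundant.
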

\begin{proof}
Recall, absolutely purity implies $\aleph_0$-injectivity, and  they are the same over coherent (on that side) rings,  by \cite[Prop.3.23]{ES}, as mentioned at the end of \S\ref{1}. Thus, (ii)$\iff$(iii).  As a-coherence is implied by coherence,  (i)$\iff$(ii) follows from  Thm.\ref{Colby1} and the first part, (1), just proved. Now (iv) implies right coherence (by Chase's theorem and the fact that products of flats are absolutely pure, so flat again). So Rem.\,\ref{FA}(3) yields (iv)$\implies$(v). The converse follows by symmetry.  The implication (iv)$\implies$(v) also entails (i). Finally, (i) implies (iv) (and (v)) by Rem.\,\ref{FA}(1).
\end{proof}

\begin{rem}
 \begin{enumerate}[\rm (1)] 
 \item As products preserve also $\aleph_0$-injectivity, \cite[Prop.3.9]{ES}, the same proof shows that another equivalent condition is "flat =  $\aleph_0$-injective" on either side of the ring. This is \cite[Prop.2.3]{GR}.
 \item \cite[48.8]{W} states some of these and other equivalent conditions for IF rings. (Beware, what is called  QR ring there is just a two-sided  absolutely pure ring.) One of the equivalent conditions, {\rm 48.8(d)},  consists of (i), left coherence of $R$, (ii)\, = {$_R$(FA)}, and (iii), a condition equivalent to {\rm $_R$(IF)}. Sabbagh's result above shows that (i) (left coherence) is superfluous in {\rm 48.8(d)}. 

  \end{enumerate}
\end{rem}

More on IF rings can be found in \cite{GR}, \cite[Ch.6]{Fa} and \cite[\S 48]{W}.

\subsection{a-coherent rings revisited}\label{a-coh} Question \ref{Kasch} stated before Cor.\,\ref{abspure} about whether right $\aleph_0$-injectivity implies right absolute purity of a right a-coherent ring has been given a partial answer in said corollary. We are now heading for another partial positive answer.

For the remainder of the section, let $\Phi$ be the set of unary left pp formulas $\phi$ such that $\D\phi(R_R) \subseteq \mtx l(I)$ (and hence equal), where $I = \phi(_RR)$, and let $\D\Phi$ be that of all of its duals. Recall, 
$R$ is right absolutely pure if and only if $\Phi$ consists of {\em all}\/ unary left pp formulas (iff $\D\Phi$ consists of all unary right pp formulas).

\begin{lem} Let $R$ be any ring. 
\begin{enumerate}[\rm (i)]
 \item $\Phi$ is closed under multiples and $\D\Phi$ is closed under inverses.
 \item $\Phi$ is closed under sum and\/ $\D\Phi$ is closed under conjunction (meet).
  \end{enumerate}
\end{lem}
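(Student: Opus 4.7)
The plan is to reduce each of the four closure claims to the algebraic identity $\D(s\phi) = (\D\phi)\,s^{-1}$ from Lemma \ref{dualmult}, combined with the standard duality exchanging $+$ with $\wedge$ on the pp lattice and routine computations with left annihilators.

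First I would record that one inclusion in the definition of $\Phi$ is automatic. Writing $\phi = (\mtx A\,|\,\mtx b\,x)$, any $r = \br z\,\mtx b \in \D\phi(R_R)$ with $\br z\,\mtx A = \br 0$ annihilates every $a \in I := \phi(_RR)$: picking $\br y$ with $\mtx A\br y = \mtx b a$, one computes $ra = \br z\,\mtx b\,a = \br z\,\mtx A\,\br y = 0$. Hence $\D\phi(R_R) \subseteq \mtx l(I)$ holds for all $\phi$, and membership in $\Phi$ amounts to the other inclusion (so to equality).

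For (i), assume $\phi \in \Phi$ and fix $r \in R$. Since pp evaluation commutes with scalar multiples, $(r\phi)(_RR) = rI$, whence
$$\mtx l((r\phi)(_RR)) = \{u \in R : ur \in \mtx l(I)\}.$$
By Lemma \ref{dualmult} the dual formula is $\D(r\phi) = (\D\phi)\,r^{-1}$, and its evaluation on $R_R$ is $\{u \in R : ur \in \D\phi(R_R)\}$; substituting the hypothesis $\D\phi(R_R) = \mtx l(I)$ matches the two sets and shows $r\phi \in \Phi$. The corresponding statement for $\D\Phi$ is dual: every inverse $(\D\phi)\,s^{-1}$ equals $\D(s\phi)$, and $s\phi \in \Phi$ by what was just shown.

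For (ii), take $\phi,\psi \in \Phi$ and set $J := \psi(_RR)$. Then $(\phi+\psi)(_RR) = I+J$ and $\mtx l(I+J) = \mtx l(I)\cap\mtx l(J)$. On the dual side, elementary duality turns sum into meet, so $\D(\phi+\psi) \sim \D\phi \wedge \D\psi$, whose evaluation on $R_R$ is $\D\phi(R_R) \cap \D\psi(R_R) = \mtx l(I) \cap \mtx l(J)$ by the hypothesis; this matches $\mtx l((\phi+\psi)(_RR))$. Thus $\phi+\psi \in \Phi$, and the closure of $\D\Phi$ under $\wedge$ follows by the same identity read dually. I do not anticipate a real obstacle here: once Lemma \ref{dualmult} and the join/meet duality are available, each of the four claims collapses to a one-line calculation.
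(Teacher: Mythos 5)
Your proposal is correct and follows essentially the same route as the paper: both rest on Lemma \ref{dualmult} ($\D(s\psi)=(\D\psi)s^{-1}$), the computation $\mtx l(sJ)=\{u: us\in\mtx l(J)\}$ for multiples, and the anti-isomorphism property of $\D$ (sum $\leftrightarrow$ meet) together with $\mtx l(I+J)=\mtx l(I)\cap\mtx l(J)$, reducing the $\D\Phi$-statements to the $\Phi$-statements exactly as the paper does. Your preliminary observation that $\D\phi(R_R)\subseteq\mtx l(\phi(_RR))$ always holds, so that membership in $\Phi$ means equality, is a harmless (and in fact clarifying) addition consistent with how the paper actually uses the definition.
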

\begin{proof}
(1).        Let $\psi\in\Phi$ and $J:= \psi(R_R)$ as before. By hypothesis, $\D\psi(R_R)=\mtx l(J)$. If $\phi= b\psi$, then $I:=\phi(R_R)= bJ$ and $\D\phi= (\D\psi)b^{-1}$ by Lemma \ref{dualmult}, hence $\D\phi(R_R)=\{ r\in R\, : \,  rb\in \D \psi(R_R)\} = \{ r\in R\, : \,  rb\in \mtx l(J) \}=   \mtx l(bJ) =   \mtx l(I)$. This proves the first statement. As every inverse of a formula from $\D\Phi$ is the dual of a mutiple from $\Phi$, by the same lemma, the second statement is equivalent to the first.

(2). Again, both statements are equivalent, since $D$ is an anti-isomorphism between $_R\Lambda$ and $\Lambda_R$. Consider finitely many formulas $\phi_i\in\Phi$ and let $\phi$ be their sum. Let their left annihilators in $R$ be $I_i$ and $I$, resp. Then $\D\phi=\bigwedge \D\phi_i$, hence $\D\phi(R_R)=\bigcap \D\phi_i(R_R)= \bigcap \mtx l(I_i) = \mtx l(\sum I_i)= l(I)$.
\end{proof}

\begin{lem} If $R$ is a right $\aleph_0$-injective and right a-coherent ring, then
$\Phi$ contains all multiples of formulas of the form  $\mtx a\,x \eq \br 0$ (where $\mtx a$ is a column vector over $R$), and $\D\Phi$ contains all formulas of the form $\mtx a\,|^{op}\,xb$, where $b\in R$.
\end{lem}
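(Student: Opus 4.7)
The plan is to reduce the statement to the single case $\psi := (\mtx a\, x \eq \br 0)$ and then invoke the closure properties from the previous lemma. Since $\Phi$ is closed under multiples by part (i) of that lemma, proving $\psi\in\Phi$ will immediately yield $b\psi\in\Phi$ for every $b\in R$. To handle the dual side, I would first unpack the definition of duality for $\psi = (\br 0 \,|\, \mtx a\, x)$, getting $\D\psi \sim \exists \br z(x \eq \br z\,\mtx a)$, i.e.\ $\mtx a\,|^{op}\,x$; then by Lemma \ref{dualmult}, $\D(b\psi) = (\D\psi)b^{-1} \sim \mtx a\,|^{op}\,xb$. So once $\psi\in\Phi$ is known, the statement about $\D\Phi$ follows from closure of $\D\Phi$ under inverses (also part (i) of the previous lemma).

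It therefore suffices to verify $\psi\in\Phi$. Set $I := \psi(_RR)$ and $C := \D\psi(R_R)$. From the formulas for $\psi$ and $\D\psi$ I would compute $I = \rann(\mtx a)$ (a right ideal of $R$) and $C = \sum_i R a_i$ (a finitely generated left ideal of $R$). The inclusion $C \subseteq \mtx l(I)$ is immediate, since each $a_i$ left-annihilates $I$ by the very definition of $I$, and hence so does any left-linear combination of the $a_i$. The whole point is thus to establish the reverse inclusion $\mtx l(I) \subseteq C$.

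For this reverse inclusion, a direct check gives $\mtx r(C) = \rann(\mtx a) = I$ (an element $r$ annihilates $\sum Ra_i$ on the right iff $a_i r = 0$ for every $i$). Since $C$ is a finitely generated left ideal, Prop \ref{double}(2) yields $\mtx l\mtx r(C) = C$, that is, $\mtx l(I) = C = \D\psi(R_R)$. Combined with the easy direction, this gives $\D\psi(R_R) = \mtx l(I) = \mtx l(\psi(_RR))$, which is precisely the condition $\psi\in\Phi$. The only substantive input is thus the double annihilator identity $\mtx l\mtx r(C) = C$ for finitely generated left ideals, which is the main (indeed only) obstacle here, and this is exactly where both hypotheses---right $\aleph_0$-injectivity and right a-coherence---are used via Prop \ref{double}(2); everything else is a bookkeeping exercise in the definitions and in the closure properties of $\Phi$ and $\D\Phi$ established in the previous lemma.
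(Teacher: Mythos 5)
Your proposal is correct and follows essentially the same route as the paper: both reduce to $\psi = (\mtx a\,x\eq\br 0)$, identify $\D\psi(R_R)$ with the finitely generated left ideal $C$ generated by the entries of $\mtx a$ and $\psi(_RR)$ with $\mtx r(C)$, and then invoke Prop.~\ref{double}(2) to get $\mtx l\mtx r(C)=C$, after which multiples and the formulas $\mtx a\,|^{op}\,xb$ are handled via the closure lemma and Lemma~\ref{dualmult} exactly as in the paper. Your extra remark that the inclusion $C\subseteq\mtx l(I)$ is automatic is a harmless elaboration the paper leaves implicit.
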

\begin{proof}
 Let $\psi$ denote the formula $\mtx a\,x \eq \br 0$. Set $J:= \psi(R_R)$ and let $C$ be the (f.g.) left ideal generated by the entries of $\mtx a$. Then $J= \mtx r(C)$, hence, by Prop.\,\ref{double}(2), $\mtx l(J)= C$. But $\D\psi = \mtx a\,|^{op}\, x$, so $\D\psi(R_R)=C$. This proves that $\psi$ (and all its multiples) are in $\Phi$. Hence $\mtx a\,|^{op}\,x\in \D\Phi$.
 
 Let now $\theta$ be the formula $\mtx a\,|^{op}\,xb$, where $b\in R$. This is the $b$-inverse of $\D\psi$, which is, by Lemma \ref{dualmult}, the dual of the $b$-multiple of $\psi$, which has just been shown to be in $\Phi$. So $\theta\in\D\Phi$.
\end{proof}

This can be put together to get an affirmative answer to  Question \ref{Kasch} in the case of 
\texttt{left Warfield ring}s. These are the rings all of whose left f.p.\ modules are direct summands of direct sums of cyclic f.p.\ modules,  see \cite[p.2139]{PPR} or \cite[p.75]{P2}.

\begin{prop}\label{Warfield}
 Suppose $R$ is a left Warfield ring.
 
 If $R$ is right $\aleph_0$-injective and right a-coherent,  $R$ is right absolutely pure (hence left IF).
\end{prop}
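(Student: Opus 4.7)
The goal is to prove $R$ is right absolutely pure; combined with the right a-coherence hypothesis, Theorem \ref{Colby1} then yields left IF. Equivalently, the set $\Phi$ introduced in \S\ref{a-coh} must exhaust all unary left pp formulas.

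The first step is to strengthen the preceding two lemmas using the right $\aleph_0$-injective plus right a-coherent hypotheses: $\Phi$ is in fact closed under finite meets. Indeed, if $\phi_1,\phi_2\in\Phi$ and $I_i=\phi_i(_RR)$, elementary duality sends meet to sum, so
\[
\D(\phi_1\wedge\phi_2)(R_R)=\D\phi_1(R_R)+\D\phi_2(R_R)=\mtx l(I_1)+\mtx l(I_2),
\]
which by Prop.\,\ref{double}(1) (applicable since $I_1,I_2$ are right ideals pp definable in $_RR$) equals $\mtx l(I_1\cap I_2)=\mtx l((\phi_1\wedge\phi_2)(_RR))$, as required.

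The second step is to express an arbitrary unary left pp formula $\phi$ as a finite meet of multiples of annihilation formulas, using the Warfield hypothesis. Take a free realization $(P,a)$ of $\phi$; by left Warfield, $P$ is a direct summand of a direct sum of cyclic f.p.\ modules, and finite generation of $P$ confines this to a finite sub-sum $\bigoplus_{i=1}^{n}R/C_i$ with each $C_i$ a f.g.\ left ideal. The image of $a$ has the form $(r_1+C_1,\dots,r_n+C_n)$ and continues to realize $\phi$ freely, since free realization transfers along split embeddings (via the retraction). Because pp-subgroups commute with finite direct sums, the pp-type of this image is the intersection of the pp-types of its components; and since $1+C_i$ freely realizes $\mtx c_i x\eq \br 0$ (with $\mtx c_i$ a column of generators of $C_i$), the element $r_i+C_i=r_i(1+C_i)$ freely realizes its $r_i$-multiple $r_i(\mtx c_i x\eq \br 0)$. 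Consequently $\phi\sim\bigwedge_{i=1}^{n}r_i(\mtx c_i x\eq \br 0)$. By the second lemma of \S\ref{a-coh} each conjunct lies in $\Phi$, and by the closure under meet established above, so does $\phi$ itself. Hence $\Phi={_R\Lambda^1}$, giving right absolute purity.

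The main obstacle is the pp-formula bookkeeping in the decomposition step—specifically, identifying the formula freely realized by $r_i+C_i$ in $R/C_i$ as exactly $r_i(\mtx c_i x\eq \br 0)$, and verifying that free realization descends cleanly from $P$ into the finite sub-sum $\bigoplus R/C_i$, so that the resulting conjunction genuinely represents $\phi$ rather than a strict consequence of it. Everything else is a routine synthesis of Prop.\,\ref{double}(1), the two preceding lemmas, and Theorem \ref{Colby1}.
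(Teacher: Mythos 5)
Your overall architecture is the left-handed mirror of the paper's proof (the paper works on the right, quoting \cite[Thm.25]{PPR} for the normal form of right pp formulas over left Warfield rings and then feeding it into the two lemmas of \S\ref{a-coh}), but the step you yourself flag as the main obstacle is in fact where the argument fails. The pp-type of the image of $a$ in $\bigoplus_{i=1}^{n}R/C_i$ is indeed the intersection of the component pp-types \emph{as sets of formulas}; however, the intersection of the principal filters generated by $\chi_i := r_i(\mtx c_i x\eq \br 0)$ is the principal filter generated by their \emph{sum} $\sum_i\chi_i$ (the join in $\Lambda$), not by their conjunction. So your decomposition yields $\phi\sim\sum_{i=1}^{n} r_i(\mtx c_i x\eq \br 0)$, whereas $\bigwedge_i r_i(\mtx c_i x\eq \br 0)$ is in general strictly below $\phi$ and is not satisfied by the image of $a$ at all. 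A concrete counterexample over $\Z$: the element $(1,1)\in\Z/2\oplus\Z/3$ has pp-type generated by $6x\eq 0\,\sim\,(2x\eq 0)+(3x\eq 0)$, while the conjunction $(2x\eq 0)\wedge(3x\eq 0)$ is equivalent to $x\eq 0$. Dualizing confirms the point: the sum decomposition on the left is exactly what turns into the paper's conjunction-of-$\mtx a\,|^{op}\,xb$ normal form on the right; a conjunction on the left is not what the Warfield hypothesis gives you.

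The repair is immediate and actually simplifies your proof: each $r_i(\mtx c_i x\eq \br 0)$ lies in $\Phi$ by the second lemma of \S\ref{a-coh} (this is where right $\aleph_0$-injectivity and right a-coherence enter, via Prop.\,\ref{double}(2)), and $\Phi$ is closed under \emph{sums} by the first lemma of that section, with no extra hypotheses; hence $\phi\in\Phi$ for every unary left pp formula, $R$ is right absolutely pure, and Theorem \ref{Colby1} gives left IF as you say. Your first step (closure of $\Phi$ under meets via Prop.\,\ref{double}(1)) is correct under the stated hypotheses, but it is aimed at the wrong closure property and becomes superfluous once the sum is used. With that fix, your derivation of the normal form from free realizations (split embedding into a finite sub-sum of cyclics, $r_i+C_i$ freely realizing the $r_i$-multiple of $\mtx c_i x\eq \br 0$) is sound and provides a self-contained substitute for the citation of \cite[Thm.25]{PPR} on which the paper relies.
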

\begin{proof}
Let $\cal P$ be the set of all finite column vectors over $R$. \cite[Thm.25]{PPR} says among other things that $R$ is left Warfield iff every right pp formula is equivalent to a finite conjunction of formulas of the form $\mtx A\,|^{op}\, \br x \mtx B$ with $\mtx A \in \cal P$. By the choice of $\cal P$ 
we have that $\mtx A$ is, say $k \times 1$. Therefore $\mtx B$ must be a column vector too. However,  we are only interested in unary such formulas, so $\mtx B$ is actually just a scalar $b\in R$. Thus all unary right pp formulas are conjunctions of formulas of the form shown in the previous lemma to be in $\D\Phi$. But this set is closed under conjunction, so it actually contains all unary pp formulas (up to $\sim$). Therefore $R$ is right  absolutely pure.
\end{proof}

\subsection{Nonsingular rings}\label{nonsing} Recall, a module is \texttt{nonsingular} if no  annihilator of a nonzero element is essential in $_RR$. Similarly for right modules. 
$R$ is \texttt{right nonsingular} if $R_R$ is nonsingular.  
Clearly, domains are (right and left) nonsingular. For right nonsingular rings, the injective hull $\E(R_R)$ of the right module $R_R$ plays the `role of the fraction field' of a commutative domain. It forms a ring extending the ring $R$, the \texttt{maximal right quotient ring} of $R$, henceforth denoted by $Q$. This ring is (von Neumann) regular and right self-injective, \cite[Cor.2.31]{G}. Further,  the extension $R_R \subseteq Q_R$ is not only essential (as it always is), but even rational. An extension of right $R$-modules $A\subseteq B$ is  \texttt{rational} (or  \texttt{dense}), in symbols,  $A\rat B$, if for every $b_0, b_1\in B$ with $b_1\not=0$ there is a ring element $s\in R$ such that  $b_1 s \not=0$ and $b_0 s\in A$, \cite[Prop.2.25]{G} or \cite[Def.8.2]{LMR}. Iterating  (and multiplying the ring elements on the right) one can extend this easily as follows. 


\begin{rem}\label{rat}
Suppose $A\rat B$ is a rational extension  of right $R$-modules. 
 
For every $b_0, \ldots, b_n\in B$ with $b_n\not=0$ there is a ring element $s\in R$ such that $b_n s \not=0$ and $b_i s\in A$ for all $i<n$.
\end{rem}

It is easy to see that a rational extension is essential (but not v.v.---consider the group of $2$ elements inside the cyclic group of $4$ elements). 
In nonsingular modules, the converse is true. Every essential submodule of a nonsingular module is rational. Further, over a right nonsingular ring $R$, the right module $Q_R$ is nonsingular, 
\cite[Prop.2.9 and Lemma 2.24(b)]{G}. So, essential submodules and rational submodules of  $Q_R$ are the same thing. 

\begin{rem}
Given a ring extension $R\subseteq Q$, we have $R_R \rat Q_R$ if and only if, for every $q_0, q_1\in Q$ with $q_1\not=0$, the  system of equations
$q_1 y \eq x \wedge q_0 y \eq z$ has a solution $(r, s, t)$ in $R$ with $r\not= 0$.
\end{rem}

The above system of equations is a left (quantifier-free) pp formula, but beware, it is in $_Q\Lambda$ rather than in $_R\Lambda$. For unary left pp formulas in $_R\Lambda$, there is a similar useful fact of much greater generality (generalizing Lemma \ref{fract}).

\begin{lem}\label{Q}
 Suppose  $R\subseteq Q$ is a ring extension and $R_R \rat Q_R$.
 
(This is the case when $R$ is a right nonsingular  and $Q$ is the maximal  right quotient ring of $R$.) 

If $\phi\in {_R\Lambda}$, a unary left pp formula over $R$,
 then $\phi(_RR)\ess \phi(_RQ)$ (as right $R$-modules). In particular, 
  $\phi$  is low if and only if $\phi(_RQ)=0$   if and only if it is low in $_Q\Lambda$.
 
\end{lem}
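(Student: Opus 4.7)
The plan is to first prove the essentiality claim directly from the definition of rational extension (in its iterated form, Rem.\,\ref{rat}), and then deduce the low/low equivalences as an easy corollary.

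Write $\phi$ in standard form as $\mtx A\,|\,\mtx b\,x$, with $\mtx A\in{^nR^k}$, $\mtx b\in{^nR}$. Take an arbitrary nonzero $q\in\phi(_RQ)$ and fix a witness tuple $\bar q=(q_1,\dots,q_k)\in Q^k$ satisfying $\mtx A\bar q=\mtx b q$ in $Q$. To witness rationality (hence essentiality) of $\phi(_RR)$ inside $\phi(_RQ)$ as right $R$-modules, I need $s\in R$ with $qs\ne 0$ \emph{and} $qs\in\phi(_RR)$. Apply Rem.\,\ref{rat} to the finite list $q,q_1,\dots,q_k,q$ in $Q$, with the final $q$ nonzero; this yields $s\in R$ such that $qs\ne 0$ (from the last entry) while simultaneously $qs\in R$ and $q_1s,\dots,q_ks\in R$ (from the earlier entries). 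Right-multiplying the equation $\mtx A\bar q=\mtx b q$ by $s$ gives $\mtx A(\bar qs)=\mtx b(qs)$, a matrix equation whose entries now all lie in $R$. Hence $qs\in\phi(_RR)$, and $qs\ne 0$, as required. Thus $\phi(_RR)\rat \phi(_RQ)$, which in particular gives $\phi(_RR)\ess \phi(_RQ)$.

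The ``in particular'' chain is then immediate. If $\phi$ is low, i.e.\ $\phi(_RR)=0$, then essentiality forces $\phi(_RQ)=0$. Conversely, since $R\subseteq Q$ and $\phi$ is existential, $\phi(_RR)\subseteq\phi(_RQ)$, so $\phi(_RQ)=0$ forces $\phi$ low. Finally, because the coefficient matrices $\mtx A,\mtx b$ of $\phi$ lie in $R\subseteq Q$ and the existential witnesses range over $Q$ in both readings, the solution set of $\phi$ in $Q$ is the same whether $Q$ is viewed as a left $R$-module or as a left $Q$-module; thus $\phi(_RQ)=\phi(_QQ)$ as sets, and by Rem.\,\ref{reformulation}(1) this common value vanishes precisely when $\phi$ is low in $_Q\Lambda$.

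The only delicate point---which I expect to be the main obstacle---is the simultaneous control required of $s$: it must (i) keep $qs$ nonzero, (ii) pull $q$ itself into $R$, and (iii) pull all finitely many existential witnesses $q_i$ into $R$. The trick is simply to list $q$ twice in the input to Rem.\,\ref{rat}, once in an early position (to force $qs\in R$) and once in the final position (to force $qs\ne 0$); everything else then follows mechanically from the matrix equation for $\phi$.
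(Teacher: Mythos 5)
Your proof is correct and follows essentially the same route as the paper: take a nonzero $q\in\phi(_RQ)$ together with its witness tuple, use Rem.\,\ref{rat} to find one $s\in R$ pulling $q$ and all witnesses into $R$ while keeping $qs\neq 0$, and right-multiply the witnessing matrix equation by $s$ (the paper first invokes essentiality to replace $q$ by some $0\neq qr\in R$ and then applies Rem.\,\ref{rat}, whereas you simply list $q$ twice in the input---a harmless variant, arguably cleaner). One small wording point: what your argument literally verifies is essentiality of $\phi(_RR)$ in $\phi(_RQ)$ (the case $b_0=b_1$ of the rationality condition), so the intermediate claim ``$\phi(_RR)\rat\phi(_RQ)$'' is not quite established as written, but only essentiality is asserted in the lemma, so nothing is lost.
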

\begin{proof} As $\phi$ is existential, $\phi(_RR)\subseteq \phi(_RQ)$. So we may assume that $\phi$ has a nontrivial solution $q\in {_RQ}$. To show that $qR\cap \phi(_RR)\not= 0$, write $\phi$ as $\mtx A | \mtx b x$ (with $\mtx A \in {^mR^n}$ and $\mtx b\in {^mR}$)  and let   $q_0, \ldots, q_{n-1} \in Q$ witness the truth of $\phi(q)$ in $_RQ$.  
As $R_R$ is essential in $Q_R$, there is  $r\in R$ such that $0\not= qr \in R$. Now set $q_n := qr$ and  choose $s$ as in Rem.\ref{rat} (for $A=R_R$ and $b$ and $B$ replaced by $q$ and $Q$). Then $qrs\in qR$ is a nontrivial solution of $\phi$ in $_RR$, as desired. 
\end{proof}

This yields another partial answer to Question \ref{quest1}. Note, for a right nonsingular ring $R$, in general $Q$ is only injective in $\ModR$ (and flat in $\RMod$---both on the `wrong' side).

\begin{cor}\label{commnonsing} Let  $R$ and  $Q$ be as in the lemma and suppose   $_RQ$ is injective (or just absolutely pure).
Then $R$ has no high-low formula.

In particular, no commutative nonsingular ring has high-low formulas.
\end{cor}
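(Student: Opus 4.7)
The plan is to argue by contrapositive using Lemma \ref{Q} together with the test-module characterization of highness via absolutely pure modules.

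First I would show the main statement. Suppose $\phi$ is a high unary left pp formula over $R$. By the very definition of highness, $\phi(N) = N$ for every absolutely pure left $R$-module $N$. The hypothesis gives that $_RQ$ is absolutely pure, so $\phi(_RQ) = Q$. Since $1 \in R \subseteq Q$, we have $Q \neq 0$, hence $\phi(_RQ) \neq 0$. But Lemma \ref{Q} says that lowness of $\phi$ in ${_R\Lambda}$ is equivalent to $\phi(_RQ) = 0$, so $\phi$ is not low. Thus no formula can be simultaneously high and low.

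Then I would deduce the commutative case. Let $R$ be a commutative nonsingular ring and $Q$ its maximal right quotient ring. By commutativity, left and right $R$-modules coincide, so the hypothesis $R_R \rat Q_R$ of Lemma \ref{Q} is satisfied and moreover $Q_R = {_RQ}$. By construction $Q_R = \E(R_R)$ is injective, hence $_RQ$ is injective (in particular absolutely pure), and the first part applies.

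The argument is essentially a direct combination of Lemma \ref{Q} (which transfers lowness between ${_R\Lambda}$ and ${_Q\Lambda}$ via evaluation on $_RQ$) with Remark \ref{fundrem}(2) (which makes injective/absolutely pure modules the test objects for highness). There is no real obstacle: the only point to check is that commutativity forces $_RQ$ to inherit the injectivity of $Q_R$, which is immediate since the two module structures are identical.
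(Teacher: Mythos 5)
Your proof is correct and is essentially the argument the paper intends: Lemma \ref{Q} reduces lowness to the vanishing of $\phi(_RQ)$, while absolute purity of $_RQ$ forces $\phi(_RQ)=Q\neq 0$ for any high $\phi$, so no formula is high-low; commutativity then identifies $_RQ$ with the injective $Q_R=\E(R_R)$ so the hypothesis holds for commutative nonsingular rings. (Only a cosmetic point: $R_R\rat Q_R$ holds because $Q$ is the maximal right quotient ring of a nonsingular ring, not because of commutativity — commutativity is needed only to transfer injectivity of $Q_R$ to $_RQ$.)
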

 
Next I state an elementwise  reformulation of an essentiality criterion from \cite[\S 1.D]{G} and one for essential closedness of right ideals it implies.

\begin{fact}\label{ess}
\begin{enumerate}[\rm (1)]
\item Suppose $A$ is a submodule of  a nonsingular right $R$-module $B$.

$A\ess B$ if and only if $b^{-1} A \ess R_R$ for every $b\in B$. 

\item
Only $b\in B\setminus A$ is meaningful in (1), and if $B = A + bR$ (is cyclic over $A$ and nonsingular), then $A\ess B$ if and only if $b^{-1} A \ess R_R$.

%
\item Suppose $I$ is a right ideal of a right nonsingular ring $R$.

$I$ is essentially closed in $R_R$ (i.e., not essential in any right ideal $J\supset I$) if and only if $r^{-1}I \not\ess R_R$  for all $r\in R\setminus I$.

\end{enumerate}
\end{fact}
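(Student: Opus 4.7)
\medskip

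\noindent\textbf{Proof plan for Fact \ref{ess}.} The three parts will be handled in order, with (1) doing most of the work, (2) a short enhancement built from (1), and (3) a quick application of (2).

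For (1), my plan is a direct argument using nothing beyond the definitions. For the forward direction, fix $b\in B$ and let $J$ be an arbitrary nonzero right ideal of $R$; I want $J\cap b^{-1}A\ne 0$. Split cases on whether $bJ=0$: if so, $J\subseteq b^{-1}\{0\}\subseteq b^{-1}A$, so $J\cap b^{-1}A=J\ne 0$; if not, $bJ$ is a nonzero submodule of $B$, so $A\ess B$ yields $0\ne br\in A$ for some $r\in J$, hence $r\in J\cap b^{-1}A$ and $r\ne 0$ (else $br=0$). For the reverse direction, let $C$ be a nonzero submodule of $B$, pick $0\ne b\in C$, and use nonsingularity of $B$ to get a nonzero right ideal $L\subseteq R$ with $L\cap \ann_R(b)=0$. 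Then $L\cap b^{-1}A\ne 0$ by hypothesis; any $0\ne r$ there satisfies $br\ne 0$ (as $r\notin \ann_R(b)$) and $br\in bR\cap A\subseteq C\cap A$.

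For (2), the first remark is immediate: if $b\in A$ then $bR\subseteq A$, so $b^{-1}A=R\ess R_R$ holds trivially and contributes nothing. For the cyclic case $B=A+bR$, the forward implication is a special case of (1). For the converse, I would insert a short auxiliary observation with no nonsingularity needed: if $L\ess R_R$ and $r\in R$, then $r^{-1}L\ess R_R$. Indeed, for $K$ a nonzero right ideal, either $rK=0$ (so $K\subseteq r^{-1}\{0\}\subseteq r^{-1}L$) or $rK$ is a nonzero right ideal (it is closed under right multiplication by $R$ and under addition), whence $rK\cap L\ne 0$ pulls back to $K\cap r^{-1}L\ne 0$. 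Applied with $L=b^{-1}A$, this gives $(br')^{-1}A=r'^{-1}(b^{-1}A)\ess R_R$ for every $r'\in R$. Writing any $b'\in B$ as $a'+br'$ one checks $b'^{-1}A=(br')^{-1}A$ (since $a's\in A$ cancels on both sides), so $b'^{-1}A\ess R_R$ for all $b'\in B$, and (1) now gives $A\ess B$.

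For (3), fix the right nonsingular ring $R$ and the right ideal $I$. For the forward direction, assume $I$ is essentially closed and take $r\in R\setminus I$; then $J:=I+rR$ is a right ideal strictly larger than $I$, and, being a submodule of $R_R$, is itself nonsingular. By essential closedness $I\not\ess J$, so part (2) applied with $A=I$, $B=J$, $b=r$ forces $r^{-1}I\not\ess R_R$. For the converse, suppose $I\ess J$ for some right ideal $J\supseteq I$; if $J\ne I$ pick $r\in J\setminus I$, so that $I\ess I+rR\subseteq J$ (essentiality passes to intermediate submodules), and (2) gives $r^{-1}I\ess R_R$, contradicting the hypothesis. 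Hence $J=I$.

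No step here looks like a real obstacle; the only place where one has to be slightly careful is verifying in (2) that the auxiliary observation on $r^{-1}L$ goes through for arbitrary $r$ (not just regular $r$), which the case split on $rK=0$ handles cleanly.
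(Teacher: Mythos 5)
Your proof is correct. Parts (2) and (3) follow the same route as the paper: the key auxiliary fact you prove for (2) (that $L\ess R_R$ implies $r^{-1}L\ess R_R$, hence $(bs)^{-1}A=s^{-1}(b^{-1}A)\ess R_R$ for all $s$) is exactly the reduction the paper makes, and (3) is the same application of (2) to the nonsingular cyclic extension $I+rR$, using that essentiality passes to intermediate submodules. The only real divergence is in (1): the paper simply cites Goodearl for the forward direction and proves the converse by observing that $b^{-1}A\ess R_R$ for all $b$ says every element of $B/A$ has essential annihilator, i.e.\ $B/A$ is singular, which for nonsingular $B$ is equivalent to $A\ess B$; you instead give a self-contained elementwise argument, getting the converse by choosing, for $0\ne b\in C$, a nonzero right ideal $L$ with $L\cap\ann_R(b)=0$ (available by nonsingularity) and intersecting it with $b^{-1}A$. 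Your version is more elementary and makes visible exactly where nonsingularity enters (only in the converse), while the paper's version is shorter and situates the statement within the standard singular-quotient characterization $A\ess B\iff B/A$ singular for nonsingular $B$.
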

\begin{proof}
(1) $\implies$ follows from \cite[Prop.1.1(c)]{G}. For the converse, note that $b^{-1} A \ess R_R$ says the right annihilator of $b+A$ in $B/A$ is essential in $R_R$. Hence the right hand side implies that $B/A$ is singular, which is equivalent to $A\ess B$ by \cite[Prop.1.21]{G}.

(2) That it suffices in (1)$\impliedby$ to check generators $b$ over $A$  follows from $\implies$, for if $b^{-1} A \ess R_R$ then so is $s^{-1}b^{-1} A \ess R_R$ and, as  $s^{-1}b^{-1} A = (rs)^{-1} A$, this yields the right hand side for all $b'\in B = A + bR$.

(3) $I$ is essentially closed in $R_R$ iff $I\not\ess I+rR$ for any $r\in R\setminus I$, which  is equivalent to $r^{-1} I \not\ess R_R$---by (2) applied to the nonsingular  $B= I+rR$.   
\end{proof}

Before applying this to formulas,  some terminology.

\begin{definition}
 Call a unary (left) pp formula $\phi$ \texttt{essential}, resp., \texttt{closed}, if $\phi(_RR)$ is essential, resp., essentially closed, in $R_R$. 
 \end{definition}

\begin{rem}\label{essphi}  $\phi$ is essential (over any ring) if and only if for no\/ $t \in R^0$ is $\phi\wedge t|x$   low if and only if\/ $\phi\wedge t|x$ is cobounded for
all\/ $t \in R^0$. In particular, essential formulas are cobounded.
\end{rem}
 
 \begin{lem}
 Suppose  $R$ is a right nonsingular ring.
 
\begin{enumerate}[\rm (1)]
\item All nontrivial multiples of an essential (left) formula are cobounded.
\item Essential (left) formulas are high (and cobounded, i.e.,north formulas). 
\item If $\phi(_RQ)\supseteq R$ (for which it suffices that\/ $1\in \phi(_RQ)$), then $\phi$ is essential, high, and cobounded.
 \end{enumerate}
\end{lem}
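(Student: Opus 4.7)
The plan is to lean on two things: right nonsingularity, which ensures that $\rann(s)$ is not essential in $R_R$ whenever $s\in R^0$ (so no essential right ideal of $R$ can sit inside $\rann(s)$), and Lemma~\ref{Q}, which transfers essentiality between $R_R$ and $Q_R$ through a left unary pp formula. Throughout I use Rem.~\ref{reformulation}(4) (coboundedness is $\phi(_RR)\neq 0$) and Dichotomy~I (high is not bounded).

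For (1), I would fix $s\in R^0$ and aim to show $s\phi(_RR)\neq 0$; this is exactly coboundedness of $s\phi$ via Rem.~\ref{reformulation}(4), since $(s\phi)(_RR)=s\phi(_RR)$. Were $s\phi(_RR)=0$, we would have $\phi(_RR)\subseteq \rann(s)$, and essentiality of $\phi(_RR)$ in $R_R$ would promote $\rann(s)$ to essentiality, contradicting right nonsingularity for the nonzero $s$.

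For (2), coboundedness is the $s=1$ case of (1) (also recorded in Rem.~\ref{essphi}). For highness I would rule out boundedness: if some $r\in R^0$ annihilated $\phi$ uniformly, then already $\phi(_RR)\subseteq \rann(r)$, and the same nonsingularity argument yields a contradiction; Dichotomy~I then forces $\phi$ to be high.

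For (3), Lemma~\ref{Q} yields $\phi(_RR)\ess \phi(_RQ)$ as right $R$-modules. Assuming $R\subseteq \phi(_RQ)$, every nonzero right ideal $J\subseteq R$ is a nonzero right $R$-submodule of $\phi(_RQ)$, so $J\cap\phi(_RR)\neq 0$; hence $\phi(_RR)\ess R_R$, i.e., $\phi$ is essential, and (2) supplies highness and coboundedness. The sufficient condition reduces to observing that $\phi(_RQ)$ is a right $R$-submodule of $Q$ (the witnesses for a left pp formula may be right-multiplied), so $1\in\phi(_RQ)$ yields $R=1\cdot R\subseteq \phi(_RQ)$. I do not foresee a real obstacle here; the mildest subtlety is the right $R$-module structure on $\phi(_RQ)$ that makes the restriction-of-essentiality step legitimate, but this is routine.
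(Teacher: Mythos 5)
Your proposal is correct and follows essentially the same route as the paper: in (1) and (2) the key step is that $t\phi(_RR)=0$ would put the essential right ideal $\phi(_RR)$ inside $\rann(t)$, contradicting right nonsingularity unless $t=0$, and then the two dichotomies convert ``not low'' / ``not bounded'' into cobounded / high; in (3) both arguments squeeze $R$ between $\phi(_RR)$ and $\phi(_RQ)$ and invoke Lemma~\ref{Q}. Your extra observation that $\phi(_RQ)$ is a right $R$-submodule (so $1\in\phi(_RQ)$ gives $R\subseteq\phi(_RQ)$) is a harmless explicit justification of the parenthetical the paper leaves tacit.
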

\begin{proof} (1). If  a multiple $t\phi$ is low, then $\phi(_RR)\subseteq \rann(t)$, hence either $t=0$ or $\phi$ is not essential. 

(2). Let $\phi$ be essential. We know already, over any ring,  $\phi$ is cobounded. If it were bounded, by $t\eq 0$ say, then the multiple $t\phi$ would be low, contradicting (1).

Under the hypothesis of (3),  $\phi(_RR)\subseteq R \subseteq \phi(_RQ)$, hence $\phi$ is essential by  Lemma
\ref{Q}. 
\end{proof}

Note, $Q$ is injective on the `wrong' side, so in (3) above we cannot infer that $\phi(_RQ)=Q$, even though $\phi$ is high. All we know is that $\phi(\E(_RR)) = \E(_RR)$, and similarly for all other injective left $R$-modules.
 
Next I  reformulate  Fact \ref{ess}(3) for pp formulas. Duplications with contrapositives are intended. 

\begin{lem}
 Suppose $\phi$ is a left pp formula  over a right nonsingular ring $R$.
 
\begin{enumerate}[\rm (1)]
 \item $\phi$ is closed if and only if no formula of the form $r^{-1}\phi$ with $r\in R\setminus \phi(_RR)$ is essential.
  \item $\phi$ is closed if and only if, for all $r\in R\setminus \phi(_RR)$, there is\/ $t \in R^0$ such that the conjunction  $r^{-1}\phi \wedge  t|x$ is low.
\item $\phi$ is not closed if and only if some formula of the form $r^{-1}\phi$ with $r\in R\setminus \phi(_RR)$ is essential.
  \item $\phi$ is not closed if and only if there is $r\in R\setminus \phi(_RR)$ such that $r^{-1} \phi\wedge t|x$ is cobounded  for all\/  $t \in R^0$.
 
 \end{enumerate}
\end{lem}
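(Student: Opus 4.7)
The plan is to reduce the entire lemma to the essentiality criterion Fact \ref{ess}(3), applied to the right ideal $I := \phi(_RR)$ of the right nonsingular ring $R$, combined with Remark \ref{essphi}. The key dictionary is that, by the very definition of $r^{-1}\phi$ recalled in \S\ref{1}, one has $(r^{-1}\phi)(_RR) = r^{-1}(\phi(_RR)) = r^{-1}I$. So saying that the formula $r^{-1}\phi$ is essential is, by definition, the same as saying that the right ideal $r^{-1}I$ is essential in $R_R$. This single translation is the entire content of the proof.

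With that observation, item (1) is just Fact \ref{ess}(3) verbatim: $\phi$ is closed iff $I$ is essentially closed in $R_R$ iff $r^{-1}I \not\ess R_R$ for every $r \in R \setminus I$, which under the dictionary is exactly "no $r^{-1}\phi$ with $r \notin \phi(_RR)$ is essential." Item (3) is then simply the contrapositive of (1); no further argument is required.

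For (2), I would substitute into (1) the first clause of Remark \ref{essphi}: the formula $r^{-1}\phi$ fails to be essential iff there exists some $t \in R^0$ such that $r^{-1}\phi \wedge t|x$ is low. Quantifying over all $r \in R \setminus \phi(_RR)$ yields (2). Item (4) is the contrapositive of (2), but it is cleaner to obtain it directly from (3) using the second clause of Remark \ref{essphi} (essentiality of $r^{-1}\phi$ is equivalent to $r^{-1}\phi \wedge t|x$ being cobounded for all $t \in R^0$). There is no genuine obstacle here; the only thing to be a touch careful about is that the definition of $r^{-1}\phi$ is the one that makes $(r^{-1}\phi)(M) = r^{-1}(\phi(M))$ in \emph{every} module $M$, including $M={_RR}$, so the translation to ideals is legitimate, and that Fact \ref{ess}(3) does apply since $R_R$ is nonsingular by assumption.
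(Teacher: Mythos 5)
Your proposal is correct and matches the paper's own proof: (1) is read off from Fact \ref{ess}(3) via the identification $(r^{-1}\phi)({_RR})=r^{-1}\phi({_RR})$, (2) follows by plugging in Remark \ref{essphi}, and (3), (4) are contrapositives. Nothing further is needed.
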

\begin{proof}  
(1)  is a special case of Fact \ref{ess}(3). Together with Rem.\,\ref{essphi}, it implies (2). The remaining items are contrapositives. 
\end{proof}

 \subsection{Reduced rings}\label{red}

First we state an immediate consequence of Lemma \ref{rinverse}.

\begin{cor} Suppose $r\in R$ is not nilpotent and $\phi$ is a unary pp formula.

 \begin{enumerate}[\rm (1)]
 \item If  
$r^{n}M\,\cap\,\phi(M)\not=0$  and $r \phi(M) = 0$, with $M$   a faithful $R$-module,    then $\phi(M)\subset r^{-1} \phi(M)\subset  \ldots \subset r^{-{n}} \phi(M)$ forms a properly ascending chain of $n+1$ pp subgroups of $M$.  
 \item If $\phi$ is an essential formula such that $r\phi$ is low, then  $\phi(_RR)\subset r^{-1} \phi(_RR)\subset r^{-2} \phi(_RR)\subset r^{-3}\phi(_RR)\subset \ldots$ is an infinite properly ascending chain of right ideals.  
 \item Every essential pp formula $\phi$ that is bounded by a non-nilpotent ring element gives rise to an infinte ascending chain of pp definable (right) ideals above the right ideal $\phi(_RR)$.\qed
 \end{enumerate} 
\end{cor}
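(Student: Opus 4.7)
The plan is to observe that each of the three parts is an easy packaging of Lemma \ref{rinverse}(4), and that (3) reduces to (2) which in turn reduces to (1). The key fact I will use is: the chain $r^{-k}\phi(M)$ is ascending by Lemma \ref{rinverse}(2), and Lemma \ref{rinverse}(4) says that stagnation of this chain at any step forces an intersection with a power of $r$ to vanish in $M$.

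For (1), I would argue by contradiction. Suppose $r^{-k}\phi(M) = r^{-(k+1)}\phi(M)$ for some $0 \leq k < n$. Lemma \ref{rinverse}(4) (applied with its $n$ equal to my $k$) then yields $r^{k+1}M \cap \phi(M) = 0$. Since $k+1 \leq n$, the containment $r^n M = r^{n-k-1}(r^{k+1}M) \subseteq r^{k+1}M$ gives $r^n M \cap \phi(M) = 0$, contradicting the hypothesis. The $n+1$ subgroups in the chain are all pp because $r^{-k}\phi$ is, by construction, a unary pp formula.

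Part (2) is obtained by specializing (1) to the regular module $M = {_R}R$, which is faithful. The assumption that $r\phi$ is low unpacks to $r\phi({_R}R) = 0$, supplying one hypothesis of (1). Non-nilpotency of $r$ keeps $r^n R \neq 0$ for every $n$, and essentiality of $\phi({_R}R)$ in $R_R$ then forces $\phi({_R}R) \cap r^n R \neq 0$, supplying the other. Thus (1) applies for arbitrarily large $n$, and because the finite chains produced by (1) are nested initial segments of the single sequence $\phi({_R}R) \subseteq r^{-1}\phi({_R}R) \subseteq r^{-2}\phi({_R}R) \subseteq \cdots$, they glue to the desired infinite strictly ascending chain of right ideals. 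Part (3) is then immediate: ``$\phi$ bounded by $r$'' means $r\phi(M) = 0$ for \emph{every} $R$-module $M$ (Def.\ of bounded), so in particular $r\phi({_R}R) = 0$ and $r\phi$ is low; since essentiality of $\phi$ is part of the hypothesis, (2) applies, and the resulting ideals $r^{-k}\phi({_R}R)$ are pp definable by the formulas $r^{-k}\phi$.

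There is really no obstacle to expect; the statement is, as the excerpt advertises, an immediate consequence of Lemma \ref{rinverse}. The only thing deserving a pinch of care is the index bookkeeping in (1)---one must pair ``strictness fails between $r^{-k}$ and $r^{-(k+1)}$'' with ``$r^{k+1}M \cap \phi(M) = 0$'' and then invoke $k+1 \leq n$ to reach the contradiction.
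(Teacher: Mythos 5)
Your proof is correct and matches the paper's intent: the corollary is stated there as an immediate consequence of Lemma \ref{rinverse} (with no further argument given), and your deduction—contradiction via Lemma \ref{rinverse}(4) together with $r^nM\subseteq r^{k+1}M$ for (1), then specializing to $M={_RR}$ using essentiality and non-nilpotency for (2), and unwinding boundedness/lowness for (3)—is exactly that intended route, with the index bookkeeping handled correctly.
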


\begin{rem}
 \begin{enumerate}[\rm (1)]
 \item Given $r, s\in R$ with $sr=0$, we have $M[s]\not= 0$ or $M[r]\not= 0$ in every nonzero $R$-module $M$.
 \item If $R$ is not a domain, then, in every nonzero $R$-module, $M[r]\not= 0$ for some $r\in R^0$.
 \item If there is a faithful pp-simple $R$-module, then $R$ is a domain.
 \item 
 If  an absolutely pure module $M$ is  strongly torsionfree in the sense that $M[s]=0$ for \emph{all} $s\in R^0$, then $M$ is pp-simple. For, every proper pp subgroup of $M$ must be bounded, i.e., contained in one of those $M[s]$.
 \end{enumerate}
\end{rem}

Using an idea from \cite[Cor.\,1.7]{HP}, we have:

\begin{prop} Suppose $R$ is a reduced ring and  $M$ is a faithful absolutely pure $R$-module with a maximal (proper) pp subgroup $\phi(M)$.
 \begin{enumerate}[\rm (1)]
 \item 
Then there is\/ $r\in R^0$ such that $\phi(M)=M[r]$, $rM$ is a minimal (nonzero) pp subgroup of $M$,  $rM \cap M[r] = 0$ and $rM + M[r] = M$. Furthermore,

\item if $r$  is not a  right zero divisor, $rM=M$, $\phi(M)=0$, $M$ is pp-simple and  $M[s]=0$ for \emph{all} $s\in R^0$,

\item if $r$  is a right zero divisor,   then $r$ is a right zero divisor, $rM \not= M$ and $M[r]\not=0$ (hence the lattice of pp subgroups of $M$ is not a chain).
\end{enumerate}
\end{prop}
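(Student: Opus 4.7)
The plan is to start from Dichotomy I applied to the proper pp subgroup $\phi(M)\subsetneq M$: since $\phi$ is not high, it is bounded, so there exists $r\in R^0$ with $r\phi(M)=0$, i.e.\ $\phi(M)\subseteq M[r]$. Because $M[r]$ is itself pp definable and $\phi(M)$ is maximal, either $\phi(M)=M[r]$ or $M[r]=M$. The latter would give $r\in\ann_R M=0$ by faithfulness, so $\phi(M)=M[r]$. This fixes the distinguished scalar $r$.

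Next I would establish the decomposition $M=rM\oplus M[r]$. For the sum, note that $rM$ is pp and that $rM\subseteq M[r]$ would force $r^2M=0$, hence $r^2=0$ by faithfulness and therefore $r=0$ since $R$ is reduced --- a contradiction. Thus $rM\not\subseteq\phi(M)$, so $rM+\phi(M)\supsetneq\phi(M)$ and maximality gives $rM+M[r]=M$. For the intersection, observe that $rM\cap M[r]=r(M[r^2])$, so the claim reduces to $M[r^2]=M[r]$. But $M[r]\subseteq M[r^2]\subseteq M[r^3]\subseteq\dots$ is an ascending chain of pp subgroups all containing the maximal proper $\phi(M)=M[r]$; if any $M[r^n]$ equalled $M$ then $r^n=0$, impossible in a reduced ring with $r\neq 0$. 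So every $M[r^n]=M[r]$, in particular $M[r^2]=M[r]$ and $rM\cap M[r]=0$. Minimality of $rM$ among nonzero pp subgroups then follows from the modular law: if $0\neq\psi(M)\subseteq rM$, then $\psi(M)\cap\phi(M)\subseteq rM\cap M[r]=0$, so $\psi(M)+\phi(M)$ strictly contains $\phi(M)$ and equals $M=rM+M[r]$; modularity inside $rM$ yields $rM=\psi(M)+(rM\cap M[r])=\psi(M)$.

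For (2), if $r$ is not a right zero divisor, Corollary \ref{highdiv}(6) says the divisibility formula $r\,|\,x$ is high, so it covanishes on the absolutely pure $M$: $rM=M$. Combined with $rM\cap M[r]=0$ this gives $M[r]=\phi(M)=0$, and this already forces every proper pp subgroup of $M$ to be $0$, so $M$ is pp-simple. For any $s\in R^0$, the pp subgroup $M[s]$ is $0$ or $M$; the latter would mean $s\in\ann_R M=0$, so $M[s]=0$.

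For (3), since $r\in R^0$ is a right zero divisor, Corollary \ref{highdiv}(5) tells us that $r\,|\,x$ is bounded: there is $t\in R^0$ with $t(rM)=0$, so $trM=0$ and faithfulness gives $tr=0$. Reducedness of $R$ then yields $rt=0$ (as $(rt)^2=r(tr)t=0$), so $t\in\ell(r)$ and by faithfulness $tM\neq 0$; any $0\neq tm$ lies in $M[r]$, since $r(tm)=(rt)m=0$. Thus $M[r]\neq 0$, and the direct sum decomposition from (1) forces $rM\neq M$. The main obstacle I anticipate --- really the only delicate point --- is the stabilization $M[r^n]=M[r]$ used to pin down the intersection $rM\cap M[r]$; once this is seen to follow from the reduced hypothesis via Lemma \ref{rinverse} together with maximality, the rest is bookkeeping with Dichotomy I and Corollary \ref{highdiv}.
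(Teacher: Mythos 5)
Your proof is correct and takes essentially the same route as the paper: Dichotomy I produces the bound $r$, maximality plus faithfulness and reducedness give $\phi(M)=M[r]=M[r^2]$ (the paper phrases this stabilization via the formula $r^{-1}\phi$), the sum and intersection claims and the minimality of $rM$ follow by maximality and modularity, and (2) uses divisibility of absolutely pure modules. The only (harmless) divergences are in (3), where you use reducedness to get $rt=0$ and hence $0\not=tM\subseteq M[r]$ before deducing $rM\not=M$, whereas the paper gets $rM\not=M$ first from $rM\subseteq M[s]$ and faithfulness; note also that your ``$t\in\lann(r)$'' should read $t\in\rann(r)$, a purely notational slip.
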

\begin{proof}
As $\phi(M)\not= M$, $\phi$ is not high, hence bounded by some $r\in R^0$, i.e., $\phi(M)\subseteq M[r]$. As $M$ is faithful, maximality implies $\phi(M) = M[r]$. Consider the formula $r^{-1}\phi$ (which, recall, is $\phi(rx)$). Then $r\phi(M)=0$ implies $\phi(M)\subseteq (r^{-1}\phi)(M)$. We claim that also here we have equality. For, otherwise, maximality would yield $(r^{-1}\phi)(M)=M$, i.e., $rM\subseteq \phi(M)$, hence $r^2M=0$ and so $r^2=0$ by faithfulness, which, in turn, would contradict the assumption on the ring.
 
 We now have that $\phi$, $r^{-1}\phi$, and $rx\eq 0$ define the same subgroup, $M[r]$, of $M$. To see that  $rM \cap M[r] = 0$, pick $ra\in  M[r]=\phi(M)$; then $a\in r^{-1}\phi(M)=\phi(M)=M[r]$, hence $ra=0$ as desired.
 Therefore, as $M[r]$ is a maximal proper pp subgroup and $rM\not=0$ by hypothesis, the pp subgroup $rM + M[r]$ must  be equal to $M$. By assumption, $(x\eq x)/\phi$ is a minimal pair in $M$, hence, by modularity of the pp lattice, also $r|x/(x\eq 0)$ is minimal, which means that $rM$ is a minimal pp subgroup $M$. By faithfulness, $rM\not= 0$, this concluding the proof of (1).
 
  (2). Absolutely pure modules are divisible, so if $r$ is not a  right zero divisor, i.e., $\mtx l(r)=0$, then  $rM = M$ (by the definition of divisible given at the outset), hence $\phi(M)  = 0$ by modularity of  $\Lambda$, so $M$ is pp-simple. In that case, finally, given any $s\in R^0$, the subgroup $M[s]$ must be $0$, for otherwise it would have to be all of $M$, which,  in turn, would contradict faithfulness.
  
  (3). Now suppose  $sr=0$ with $s\not= 0$. As $rM$ is not zero, but contained in $M[s]$, we have  $rM\not= M$ by faithfulness. Modularity of $\Lambda$ (together with (1)) shows that $M[r]\not= 0$.
\end{proof}

\begin{cor}\label{abspurered}
If $M$ is a faithful absolutely pure module over a reduced ring $R$, then one of the following holds.
 
  \begin{enumerate}[\rm (1)]
  \item $M$ has an infinite ascending chain of pp subgroups above any given proper pp subgroup.
  \item $M$ is pp-simple (i.e., has no proper pp subgroups other than $0$).
  \item The lattice of pp subgroups of $M$ contains a minimal (nonzero) element and a maximal (proper) element which are incomparable and whose sum is $1$ (i.e., $x\eq x$).  \qed
\end{enumerate} 
\end{cor}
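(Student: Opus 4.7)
The plan is to dichotomize on whether $M$ admits a maximal proper pp subgroup and, in the affirmative subcase, to unpack the preceding Proposition.

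Suppose first that no proper pp subgroup of $M$ is maximal. Given any proper pp subgroup $\psi(M)$, the collection of proper pp subgroups of $M$ containing $\psi(M)$ has no greatest element -- such a greatest element would be maximal among all proper pp subgroups of $M$, contradicting our assumption. One may therefore recursively pick proper pp subgroups $\psi = \psi_{0} \subsetneq \psi_{1} \subsetneq \psi_{2} \subsetneq \cdots$, each strictly enlarging the previous while remaining proper. This is the infinite ascending chain required in (1).

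Suppose next that $M$ does possess a maximal proper pp subgroup $\phi(M)$. The preceding Proposition then produces $r \in R^{0}$ with $\phi(M) = M[r]$, with $rM$ a minimal (nonzero) pp subgroup, and with $rM \cap M[r] = 0$ and $rM + M[r] = M$. If $r$ is not a right zero divisor, Proposition (2) forces $\phi(M) = 0$, and $M$ is pp-simple, giving case (2). If $r$ is a right zero divisor, Proposition (3) yields $rM \ne M$ and $M[r] \ne 0$; since faithfulness of $M$ also forces $rM \ne 0$, both $rM$ and $M[r]$ are nonzero proper pp subgroups with meet $0$ and join $M$. Such a pair must be incomparable -- indeed $rM \subseteq M[r]$ would give $M[r] = M$, and $M[r] \subseteq rM$ would give $M[r] = 0$, each contradicting what we already have -- so case (3) holds.

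There is no genuine obstacle: all of the substance sits in the Proposition (which in turn rests on Lemma \ref{rinverse} together with reducedness of $R$, used to exclude $r^{2} = 0$). The only care needed in the corollary itself is the small bookkeeping check that the minimal $rM$ and maximal $M[r]$ of case (3) are actually incomparable, which is immediate from their being proper, nonzero, disjoint, and jointly spanning $M$.
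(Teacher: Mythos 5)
Your proof is correct and follows exactly the route the paper intends: the corollary is left as an immediate consequence of the preceding Proposition, with the only unwritten step being the trivial observation that if no maximal proper pp subgroup exists one can iterate strict enlargements to get the chain of case (1). Your bookkeeping for incomparability in case (3) (using $rM\cap M[r]=0$, $rM+M[r]=M$, and the nonvanishing of both) is exactly the intended check.
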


Over a domain, being divisible, absolutely pure modules are faithful. So this special case of the proposition reads as follows.

\begin{cor}\label{abspuredom}
 Suppose $M$ is an absolutely pure module over a domain. 
 
 If $M$  has a maximal proper pp subgroup, $M$ is pp-simple and  torsionfree. Conversely, if $M$ is torsionfree, it is pp-simple (hence has a unique maximal proper pp subgroup). \qed
\end{cor}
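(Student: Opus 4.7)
The plan is to derive both implications directly from the preceding proposition and Dichotomy I, after one preliminary observation about faithfulness.

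First I would observe that any nonzero absolutely pure module $M$ over a domain is automatically faithful. Indeed, by Fact \ref{fact2}, $M$ is divisible, and since $R$ is a domain the left annihilator of any $r\in R^0$ is $0$; so the definition of divisibility forces every element of $M$ to be divisible by $r$, i.e.\ $rM=M$. If $rM=0$ for some $r\in R^0$ we would get $M=0$, contradicting $M\neq 0$. (If $M=0$ the corollary is trivial.) This places $M$ inside the hypothesis of the preceding proposition, as domains are reduced.

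For the forward direction, suppose $\phi(M)$ is a maximal proper pp subgroup. The proposition produces $r\in R^0$ with $\phi(M)=M[r]$. Since $R$ is a domain we have $R^0=\Sreg$, so $r$ is not a right zero divisor; clause (2) of the proposition therefore applies and yields $\phi(M)=0$, pp-simplicity of $M$, and $M[s]=0$ for all $s\in R^0$, which is exactly torsionfreeness.

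For the converse, suppose $M$ is torsionfree. Let $\phi$ be any unary pp formula with $\phi(M)\neq M$. Since $M$ is absolutely pure, $\phi$ cannot be high (Remark \ref{fundrem}(2) applied in $\E(_RR)$, or equivalently Fact \ref{fact}(2)), so by Dichotomy I, $\phi$ is bounded: there exists $r\in R^0$ with $r\phi(M)=0$, i.e.\ $\phi(M)\subseteq M[r]$. Torsionfreeness gives $M[r]=0$, hence $\phi(M)=0$. Thus every proper pp subgroup of $M$ is zero, so $M$ is pp-simple, with $0$ as its unique maximal proper pp subgroup. There is no real obstacle here; the substance is in the proposition and the dichotomy, and the only subtlety is the cheap check that divisibility over a domain upgrades to faithfulness so the proposition is applicable.
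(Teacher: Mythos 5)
Your argument is correct and follows essentially the same route as the paper: faithfulness of $M$ comes from divisibility over a domain (Fact \ref{fact2}), the forward direction is the special case of the preceding proposition where $r\in R^0$ is automatically regular so clause (2) applies, and the converse is exactly the paper's earlier observation that a proper pp subgroup of an absolutely pure module is bounded (Dichotomy I / Lemma \ref{THELemma}) and hence lands in some $M[s]=0$. The only cosmetic point is that "$\phi$ is not high" follows directly from the definition of high (covanishing on all absolutely pure modules), rather than from Rem.\ \ref{fundrem}(2).
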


\begin{cor}[of the proof]\label{divOredom}
The same holds true for divisible modules over two-sided Ore domains.
\end{cor}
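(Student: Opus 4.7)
The proof plan has two halves. First, for the forward direction (existence of a maximal proper pp subgroup implies pp-simple and torsionfree), I would run the proof of the Proposition line by line, with ``absolutely pure'' replaced by the explicit hypothesis ``divisible.'' The only ingredient that the Proposition took from absolute purity was the implication $\lann(r)=0\Rightarrow rM=M$, which is now the very definition of divisibility. Faithfulness of $M$, which the Proposition also required, comes for free: for any $r\in R^0$, divisibility gives $rM=M\neq 0$, so no nonzero $r$ annihilates $M$. With these two ingredients, one runs through the same sequence of identifications $\phi(M)=M[r]=(r^{-1}\phi)(M)$ (using that $r^2\neq 0$ in a domain), the splitting $rM\cap M[r]=0$, $rM+M[r]=M$, and the modularity argument. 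Because $rM=M$ by divisibility, modularity forces $M[r]=0$, so $\phi(M)=0$; pp-simplicity and torsionfreeness then follow exactly as in the Proposition. This half uses only ``$R$ is a domain,'' not the two-sided Ore condition.

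For the converse (torsionfree implies pp-simple), the two-sided Ore hypothesis becomes essential. I would use it to extend the $R$-action on $M$ uniquely to a left action by the classical (two-sided) quotient division ring $Q$: for $q=s^{-1}r\in Q$ and $m\in M$, set $qm$ to be the unique $m'\in M$ with $sm'=rm$, well-defined by torsionfreeness and divisibility. Since $M$ is a left module over the division ring $Q$, it is a direct sum of copies of ${_QQ}$; hence, regarded as a left $R$-module, $M$ lies in the definable subcategory $\langle {_RQ}\rangle\subseteq \RMod$. So it suffices to verify that $_RQ$ itself is pp-simple: for $\phi=\mtx A\,|\,\mtx b x$ with $\mtx b\neq 0$, clearing denominators upgrades the highness criterion $\lann(\mtx A)\subseteq \lann(\mtx b)$ of Cor.\,\ref{highdiv}(2) from $R$ to $Q$, and over the skew field $Q$ this dichotomy precisely decides whether $\phi({_RQ})$ equals $Q$ or $0$.

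The main obstacle is not in the forward direction, which is essentially a transcription of the Proposition's proof. It is in the converse: one must verify cleanly that torsionfree divisible $R$-modules really do become left $Q$-modules under two-sided Ore, and that the pp subgroups of $_RQ$ are trivial. Both are routine applications of the Ore condition (clearing denominators) together with a short piece of skew-field linear algebra, but they are where the hypothesis ``two-sided Ore domain'' actually earns its keep.
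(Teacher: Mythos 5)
Your forward half rests on a misreading of where the Proposition actually uses absolute purity. Its very first inference is: since $\phi(M)\neq M$, the formula $\phi$ is not high, hence (Dichotomy I) bounded by some $r\in R^0$. The step ``$\phi(M)\neq M \Rightarrow \phi$ not high'' uses absolute purity in an essential way: high formulas covanish on absolutely pure modules, but \emph{not} on merely divisible ones. Divisibility only guarantees that \emph{cobounded} formulas (those above some $s|x$, $s\neq 0$) define all of $M$. Over a general domain there exist high formulas that are not cobounded --- the high-low formulas, which by Thm.~\ref{Ore} exist precisely when the domain fails to be two-sided Ore --- and for such a $\phi$ the implication ``$\phi(M)\neq M \Rightarrow \phi$ bounded'' is unjustified, so the chain $\phi(M)\subseteq M[r]$, and everything after it, collapses. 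This is exactly the point at which the paper invokes the two-sided Ore $=(\flat\sharp)$ hypothesis: by Prop.~\ref{flatstardom} every unary pp formula over such a domain is either bounded or cobounded, and in a divisible module the cobounded ones define everything, so a proper pp subgroup must be defined by a bounded formula. Your explicit claim that ``this half uses only `$R$ is a domain,' not the two-sided Ore condition'' is therefore where the proposal breaks; the repair is available to you (Ore is assumed in the statement), but it must be inserted, and it is the whole content of the corollary's proof in the paper. The second use of absolute purity, in part (2) of the Proposition, is indeed the one where divisibility suffices, as you say.

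Your converse half is correct in outline but takes a heavier route than the paper. You build the $Q$-vector-space structure on a torsionfree divisible module ($Q$ the division ring of fractions) and reduce to pp-simplicity of $_RQ$; that last point is cleanest by noting that $\phi({_RQ})$ is invariant under all endomorphisms of $_RQ$, in particular under right multiplication by elements of $Q$, hence is a right $Q$-subspace of $Q$ and so $0$ or $Q$. The paper gets the converse (and the forward direction) in one stroke from the bounded/cobounded dichotomy: every proper pp subgroup of a divisible module is bounded, hence contained in some $M[s]$ with $s\neq 0$, hence zero when $M$ is torsionfree. Your approach buys an explicit structural description ($M$ is a direct sum of copies of $_RQ$), the paper's buys brevity and stays entirely inside the formula calculus already developed.
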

\begin{proof}
  Inspection of the proposition's proof shows that absolute purity proper is used only once, in the very first line, when applying the dichotomy to see that a proper pp subgroup must be bounded. Over a ($\flat\sharp$) domain, a formula is either bounded or cobounded, Prop.\,\ref{flatstardom} below. As in a divisible module, cobounded formulas define everything, only a bounded formula can define a proper subgroup. 
  
  In the remainder of the proof of the proposition absolute purity figures only once more, in {\rm (2)}, where divisibility suffices, as mentioned. Thus the result holds for divisible modules over  ($\flat\sharp$) domains. It remains to note that, by Thm.\,\ref{Ore} below, the ($\flat\sharp$) domains are precisely the two-sided Ore domains.
\end{proof}
 
 For the special case of commutative domains, this is 
  \cite[Cor.1.7]{HP}.

\section{Domains}\label{dom}
In this section we show, among other things, that domains are characterized by the following picture, where the east region of $\Lambda$ is empty, i.e., by the feature that  there be no formula that is bounded and cobounded (see Rem.\,\ref{reformulation} for other ways of expressing that). 

\vspace{3em}
 
 \setlength{\unitlength}{2cm}
\begin{picture}(2,2)
  \put(0,1){\line(1,1){1}}
 \put(0,1){\line(1,-1){1}}
  \put(1,1){\line(-1,-1){.5}}
  \put(1,1){\line(-1,1){.5}}
   \put(1,1){\line(1,-1){.5}}
    \put(1,1){\line(1,1){.5}}
     \put(1,1){\line(-1,-1){.5}}
      \put(1,0){\line(1,1){.5}}
      \put(1,2){\line(1,-1){.5}}
      
       \put(1.3,1.7){$high$}
         \put(.3,1.7){$cobdd$}
              \put(-.15,.7){$high$}
                 \put(.4,0.2){$bdd$}
                  \put(1.3,0.2){$low$}
               \put(-.1,1.2){$low$}
                  \put(.9,1.5){$r|x$}
                   \put(.8,.45){$rx\dot= \,0$}
                   
                   \put(1,2){\circle*{.05}}
                   \put(1,0){\circle*{.05}}
                   \put(.83,-.2){$x\dot=\, 0$}
                   \put(.83,2.1){$x\dot=\, x$}
 \end{picture}
 \vspace{2em}

%
\begin{rem}
 If $r, s \in R\setminus 0$ and $rs=0$, then $s|x$ is a cobounded formula which is also bounded (by $rx\eq 0$). So, if there are no cobounded bounded formulas, the ring must be a domain. (This also follows from {\rm Cor.\,\ref{highdiv}(5)} and {\rm (8)}.)
\end{rem}

We are going to prove the converse next.
By definition, every nontrivial annihilation formula is bounded. But the dual, that every nontrivial divisibility formula be high, characterizes domains. 

\begin{prop}\label{dich} The following conditions are equivalent for any  ring $R$.
\begin{enumerate}[\rm (i)]
 \item $R$ is a domain (i.e., has no zero divisors).
 \item Every bounded formula (on both sides)  is low.
 \item Every cobounded formula (on both sides)   is high.
 \item Every nontrivial divisibility formula  (on both sides)  is high.
\end{enumerate}
 \end{prop}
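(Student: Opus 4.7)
My plan is to reduce the four-way equivalence to two essentially independent facts: a duality bridge between (ii) and (iii), a trivial monotonicity bridge between (iii) and (iv), and a direct ring-theoretic bridge between (i) and (iv). That way the only real content lies in characterizing when a divisibility formula $s|x$ is high.

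First I would establish (ii)$\iff$(iii) using the Duality Lemma \ref{DualityLemma}. A left formula $\phi$ is bounded iff its elementary dual $\D\phi\in\Lambda_R$ is cobounded, and $\phi$ is low iff $\D\phi$ is high. Since both (ii) and (iii) are required on both sides, applying $\D$ converts "every bounded left formula is low" into "every cobounded right formula is high," and similarly the other way around; so (ii) and (iii) are literally the same assertion written from the two sides.

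Next I would dispose of (iii)$\iff$(iv). The direction (iii)$\Rightarrow$(iv) is immediate, since for any $s\in R^0$ the formula $s|x$ is itself cobounded (it lies above $s|x$!), cf.\ Cor.\,\ref{highdiv}(8). For the converse, any cobounded $\phi$ satisfies $s|x\leq\phi$ for some $s\in R^0$ by definition; since highness is preserved upward in the lattice (if $(s|x)(N)=N$ for every absolutely pure $N$, then $\phi(N)=N$ a fortiori), (iv) gives that $\phi$ is high. This step is purely lattice-theoretic and needs no further input.

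The heart of the argument is (i)$\iff$(iv), and here I would quote Cor.\,\ref{highdiv}(6), which says that $r|x$ is high iff $r\in\Sl$, i.e., iff $r$ is not a right zero divisor. Applied on both sides, (iv) asserts exactly that every nonzero element of $R$ is neither a right nor a left zero divisor, which is precisely the definition of a domain. For the converse, in a domain no nonzero element is a zero divisor on either side, so Cor.\,\ref{highdiv}(6) (and its right analogue) yields (iv). For a sanity check one can reverse the contrapositive explicitly: if $rs=0$ with $r,s\in R^0$, then $s|x$ is cobounded but bounded by $rx\eq 0$ (not high), breaking (iv) and (iii), while on the other side $rx\eq 0$ is bounded but cobounded by $s|x$ (not low), breaking (ii).

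There is no real obstacle; all the work has already been done in Cor.\,\ref{highdiv} and the Duality Lemma. The main thing to be careful about is keeping the two-sided bookkeeping straight, since (ii), (iii), (iv) each silently include a right-handed clause that must be invoked to exclude the presence of left zero divisors in the proof of (iv)$\Rightarrow$(i).
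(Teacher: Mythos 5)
Your proof is correct, but it is organized differently from the paper's. The paper runs the cycle (i)$\Rightarrow$(ii)$\Leftrightarrow$(iii)$\Rightarrow$(ii)$\Rightarrow$(i) with direct arguments at the level of the ring and its injective hull --- (i)$\Rightarrow$(ii) by observing that a bound $r$ forces $\phi({_RR})=0$ over a domain, and (ii)$\Rightarrow$(i) by testing the single formula $rx\eq 0$ --- and then treats (iv) as a separate appendage equivalent to (i), proving (i)$\Rightarrow$(iv) from divisibility of injectives and (iv)$\Rightarrow$(i) by writing $1=se$ in $\E({_RR})$. You instead chain (ii)$\Leftrightarrow$(iii)$\Leftrightarrow$(iv)$\Leftrightarrow$(i): the duality step coincides with the paper's, but your bridge (iii)$\Leftrightarrow$(iv) (every $s|x$ with $s\in R^0$ is cobounded, and highness is upward closed in $\Lambda$) is a purely lattice-theoretic link the paper does not use, and your heart (i)$\Leftrightarrow$(iv) is delegated to Cor.\,\ref{highdiv}(6) (and its right-hand analogue), i.e.\ to the criterion $\lann(r)=0$, rather than re-argued in the injective hull. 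What you gain is economy --- all the real content is a citation of already-established criteria, and bounded/low formulas enter only through duality; what the paper's route buys is self-containedness, with elementary one-line verifications of (i)$\Rightarrow$(ii) and (iv)$\Rightarrow$(i) that do not re-invoke the highness/boundedness machinery. One cosmetic remark: the left-hand clause of (iv) alone already forces $R$ to be a domain (if $ab=0$ with $a,b\neq 0$ then $b$ is a right zero divisor), so your two-sided bookkeeping in (iv)$\Rightarrow$(i) is safe but slightly redundant.
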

\begin{proof} (i) $\Rightarrow$ (ii). If $\phi$ is bounded, we have in particular $r\phi(_RR)=0$ for some $r\not= 0$. As $R$ is a domain, $\phi(_RR)=0$, hence $\phi(F)=\phi(_RR)F=0$ for every flat $R$-module $F$.

\noindent
(ii) $\Rightarrow$ (iii). Let $\phi$ be cobounded. Then its dual is bounded, hence low by (ii), whence $\phi$ 
is high by the Duality Lemma \ref{DualityLemma}.


\noindent
(iii) $\Rightarrow$ (ii).  By (iii), the dual of a bounded formula $\phi$ is high, so $\phi$ is low by the Duality Lemma  again.

\noindent
(ii) $\Rightarrow$ (i). It suffices to show that no nonzero element (of $R$) is a left zero divisor. To this end, let $r\not= 0$. Consider the formula $\phi = (rx\eq 0)$.  Since it is bounded, $\phi$ is low by (ii), hence $\phi(_RR)=0$, and so $r$ is not a left zero divisor.

\noindent
(i) $\Rightarrow$ (iv).  Every injective is divisible, so, over a domain, every nontrivial divisibility formula is high.

\noindent
(iv) $\Rightarrow$ (i). It suffices to show that no nonzero element is a right zero divisor. To this end, let $s\not= 0$ and consider the formula $s|x$. Since it is high by hypothesis, we have $sE=E$ in the 
 injective hull $E$ of $_RR$. Then $1=se$ for some $e\in E$ and therefore $s$ cannot be a right zero divisor. \end{proof}

Invoking the dichotomies  again, we obtain  said converse.

\begin{cor}\label{cordich}
\begin{enumerate}[\rm (1)]
 \item $R$ is a domain if and only if every unary pp formula  is either high or low (and possibly both, see below).
 \item  $R$ is \texttt{not} a domain if and only if there is a unary pp formula that is both bounded and cobounded.
\end{enumerate}
 \end{cor}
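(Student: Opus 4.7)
The plan is to derive both parts as bookkeeping corollaries of Proposition \ref{dich} together with the two dichotomies, so no substantially new content is needed. The whole argument is a routine translation between the partition of $\Lambda$ into four regions and the clauses (i)--(iv) of Proposition \ref{dich}. I do not anticipate any real obstacle; the only thing to get right is which dichotomy excludes which overlap.

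For part (1), I would prove the forward direction as follows. Assume $R$ is a domain and let $\phi$ be any unary pp formula. By Dichotomy I, $\phi$ is either high or bounded. In the first case there is nothing to do; in the second, Proposition \ref{dich}(i)$\Rightarrow$(ii) tells us that $\phi$ is low. (The parenthetical `and possibly both' in the statement just records that the case of a high-low formula is allowed; over domains these are the formulas in the west region.) For the converse I would assume every unary pp formula is high or low and verify clause (ii) of Proposition \ref{dich}: given a bounded $\phi$, the no-overlap half of Dichotomy I forbids $\phi$ from being high, so the assumption forces $\phi$ to be low; Proposition \ref{dich}(ii)$\Rightarrow$(i) then gives that $R$ is a domain.

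For part (2), the reverse direction (``bounded-and-cobounded $\phi$ exists $\Rightarrow$ $R$ is not a domain'') follows similarly: if such a $\phi$ exists, then being cobounded prevents it from being low by the no-overlap half of Dichotomy II, so $\phi$ witnesses the failure of Proposition \ref{dich}(ii), and $R$ cannot be a domain. The forward direction is exactly what is laid out in the remark just before the corollary: given zero divisors $r, s \in R^0$ with $rs = 0$, the formula $s\,|\,x$ is cobounded by Cor.\,\ref{highdiv}(8) (since $s \ne 0$) and bounded by $rx \eq 0$ (since $r \ne 0$ and $r$ annihilates every element of the form $sb$). This produces the required bounded-cobounded formula.

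In short, the argument is just: apply one dichotomy to extract the relevant half of $\phi$'s ``coordinates,'' then feed the result into Proposition \ref{dich}. No step should be more than a couple of lines, and no technical difficulty is expected.
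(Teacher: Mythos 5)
Your proposal is correct and follows essentially the same route as the paper, whose one-line proof ("invoking the dichotomies again") is exactly the combination of Proposition \ref{dich}, Dichotomies I and II, and the zero-divisor remark preceding the proposition that you spell out. The only difference is that you make the bookkeeping explicit, which is harmless.
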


 \subsection{$(\flat\sharp)$ domains} Putting the figures at the outset of \S\S \ref{four} and \ref{dom} together, we see that the unary pp lattice of a $(\flat\sharp)$ domain has the shape below, where it does not matter whether is is left or right $(\flat\sharp)$. Neither does it for the few results to come, which is why we stick to the term $(\flat\sharp)$ domain. Besides, we are going to see that, for domains $(\flat\sharp)$ is left-right symmetric.
 \vspace{3em}
 
 \setlength{\unitlength}{2cm}
\begin{picture}(2,2)
  \put(1,1){\line(-1,-1){.5}}
  \put(1,1){\line(-1,1){.5}}
   \put(1,1){\line(1,-1){.5}}
    \put(1,1){\line(1,1){.5}}
     \put(1,1){\line(-1,-1){.5}}
      \put(1,0){\line(1,1){.5}}
            \put(1,0){\line(-1,1){.5}}
      \put(1,2){\line(1,-1){.5}}
       \put(1,2){\line(-1,-1){.5}}
      
       \put(1.3,1.7){$high$}
         \put(.3,1.7){$cobdd$}
                 \put(.4,0.2){$bdd$}
                  \put(1.3,0.2){$low$}
                  \put(.9,1.5){$r|x$}
                   \put(.8,.45){$rx\dot= \,0$}
                   
                   \put(1,2){\circle*{.05}}
                   \put(1,0){\circle*{.05}}
                   \put(.83,-.2){$x\dot=\, 0$}
                   \put(.83,2.1){$x\dot=\, x$}
 \end{picture} 
 In words:
 \vspace{2em}

 \begin{prop}\label{flatstardom} Over a $(\flat\sharp)$ domain, the high formulas are exactly the cobounded formulas and the low formulas are exactly the bounded ones, and there is no formula that is both.
 
Consequently, every unary pp formula is either bounded or  cobounded (and not both). \end{prop}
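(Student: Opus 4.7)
The plan is to assemble the proposition from three ingredients already at hand: the domain characterization (Prop.\,\ref{dich} and Cor.\,\ref{cordich}), the two dichotomies, and the fact that $(\flat\sharp)$ rules out high-low formulas (Rem.\,\ref{flatsharp}(1)). None of the work is computational; the statement is essentially a bookkeeping consequence of what has already been established, so the main task is to arrange the implications in a clean order.

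First I would pin down the equality of classes \emph{high} $=$ \emph{cobounded}. One direction (cobounded $\Rightarrow$ high) is immediate from Prop.\,\ref{dich}(iii), which holds over any domain. For the converse, let $\phi$ be high. Since $R$ is $(\flat\sharp)$, Rem.\,\ref{flatsharp}(1) says $\phi$ is not low; by Dichotomy II, $\phi$ must then be cobounded. The equality \emph{low} $=$ \emph{bounded} is dual: bounded $\Rightarrow$ low is Prop.\,\ref{dich}(ii), and if $\phi$ is low then the same $(\flat\sharp)$ hypothesis forbids it to be high, so Dichotomy I forces it to be bounded.

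The clause ``and there is no formula that is both'' is simply a restatement of the $(\flat\sharp)$ assumption through the equivalences just proved: a formula that is both bounded and cobounded would, by the two equalities, be both low and high, contradicting Rem.\,\ref{flatsharp}(1). Finally, the ``Consequently'' clause follows by combining Cor.\,\ref{cordich}(1) (over a domain every unary pp formula is either high or low) with the two equalities high $=$ cobounded and low $=$ bounded, giving that every formula is either cobounded or bounded, with the ``not both'' part already settled.

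I do not expect a real obstacle: everything reduces to citing the four earlier results in the right order. The only thing to be mildly careful about is that Prop.\,\ref{dich} works on both sides, so the argument applies to $_R\Lambda$ and $\Lambda_R$ alike without distinguishing the side on which $(\flat\sharp)$ is witnessed, which matches the paragraph preceding the proposition where the authors note that the side of $(\flat\sharp)$ is immaterial here.
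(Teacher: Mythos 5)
Your proposal is correct and matches the paper's intent exactly: the proposition is stated there as the verbal summary of superimposing the two earlier figures, i.e.\ precisely the combination of Rem.\,\ref{flatsharp}(1), the two dichotomies, and Prop.\,\ref{dich}/Cor.\,\ref{cordich} that you spell out. Your remark about left-right symmetry also agrees with the paper's preceding paragraph, so nothing is missing.
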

 
 For the special case of commutative domains---which do satisfy $(\flat\sharp)$, see below---the second statement was obtained, in different terminology, by direct calculations in the fraction field in \cite[Predl.1.5]{HP}, and later generalized to two-sided Ore domains in  \cite[Prop.3 and p.253, after Prop.4]{H2}, cf.\ \cite[Fact 2.3 and after]{PP} and \cite[Prop.8.2.1]{P2}. See also \cite[Thm.8.2.28]{P2}.

\begin{cor}\label{highfilterFlatSharp} Suppose $R$ is  a  $(\flat\sharp)$ domain.
\begin{enumerate}[\rm (1)]
\item The set  of all high formulas and all negations of low formulas is a (consistent and) complete type. (This is the type $p_{hi/lo}$ from\/ {\rm Rem.\,\ref{types})}.
 \item The filter of high formulas is generated by the nontrivial divisibility formulas. 
 \end{enumerate}
\end{cor}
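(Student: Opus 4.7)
The proof will be a quick bookkeeping exercise: both parts follow essentially immediately from what has just been established in Proposition \ref{flatstardom} (together with Corollary \ref{cordich} and Remark \ref{flatsharp}(1)), so there is no genuine obstacle to overcome here. I will lay out the assembly of the pieces.

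For (1), I will first invoke Remark \ref{flatsharp}(1) to record that over a ring with a $(\flat\sharp)$ module there is no high-low formula, which by Remark \ref{types}(1) is exactly the condition for the type $p_{hi/lo} = \Phi_{hi} \cup \neg \Psi_{lo}$ to be consistent. For completeness, I will appeal to Corollary \ref{cordich}(1): over a domain every unary pp formula is either high or low. Combining these two facts, we obtain $\Lambda = \Phi_{hi} \cup \Psi_{lo}$ with $\Phi_{hi} \cap \Psi_{lo} = \emptyset$, and the remark following the definition of $p_{\Phi/\Psi}$ then gives that $p_{hi/lo}$ is complete.

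For (2), I will unpack what it means for the nontrivial divisibility formulas to generate the filter $\Phi_{hi}$. On the one hand, every formula $r|x$ with $r \in R^0$ is high: over a domain $r$ is not a right zero divisor, so this is Remark \ref{fundrem}(3) (or Corollary \ref{highdiv}(6)). On the other hand, I must show each high $\phi$ is implied by some $r|x$ with $r \in R^0$. Here Proposition \ref{flatstardom} does the work: over a $(\flat\sharp)$ domain, high equals cobounded. Hence any high $\phi$ is cobounded, and by the very definition of cobounded there exists $r \in R^0$ with $r|x \leq \phi$.

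The only mildly delicate point is to state this cleanly: strictly speaking, (2) says the filter $\Phi_{hi}$ is the smallest filter containing all formulas $r|x$ with $r \in R^0$, so I will close the proposal by noting that the formulas $r|x$ with $r \in R^0$ do lie in $\Phi_{hi}$ (first direction above) and every element of $\Phi_{hi}$ lies above such a formula (second direction above), which is exactly the assertion. No compactness or further lattice-theoretic input is required.
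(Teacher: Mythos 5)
Your proposal is correct and matches the paper's intent: the corollary is stated there without a separate proof precisely because it reads off from Proposition \ref{flatstardom} (high = cobounded, low = bounded, no high-low formula) together with Corollary \ref{cordich} and the discussion of $p_{\Phi/\Psi}$ before Remark \ref{types}, which is exactly the assembly you give. Both directions of (2) and the consistency/completeness bookkeeping in (1) are handled as the paper would have them.
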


\begin{rem}\label{firstUlm}
{\rm (2)} implies that the first Ulm subgroup of an abelian group is the intersection of all high pp subgroups. 
 \end{rem}
 
 In the next section we adopt this as the definition of first Ulm subgroup for modules over arbitrary rings.

\begin{lem}\label{vandeWater} 

\begin{enumerate}[\rm (1)]
\item \emph{(van de Water \cite{Wat})}. If $R$ is left Ore domain, torsionfree divisible left $R$-modules are injective.
 \item If $R$ is a $(\flat\sharp)$ domain, torsionfree divisible $R$-modules are flat.
\end{enumerate}
 \end{lem}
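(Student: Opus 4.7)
For part (1), I would simply appeal to van de Water's original argument, as cited: over a left Ore domain $R$, Baer's criterion is verified by fixing any nonzero $a$ in the given left ideal $I$ and defining an extension of a map $f : I \to M$ to $_RR$ via a `formal quotient', where the value at $a$ makes sense in $M$ because $M$ is divisible, and well-definedness on $I$ is forced by torsionfreeness combined with the left Ore condition (which lets one compare any two values $f(b)$ and $f(c)$ through a common left multiple). Since the statement is attributed to van de Water, I would mainly cite \cite{Wat} rather than redevelop the argument.

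For part (2), my strategy is to apply Zimmermann's flatness criterion (Fact \ref{fact}(1)) together with the sharp dichotomy of Prop.\,\ref{flatstardom}: over a $(\flat\sharp)$ domain, every unary pp formula $\phi$ is either bounded (equivalently low) or cobounded (equivalently high), and the two cases are mutually exclusive. So it suffices to verify $\phi(M) = \phi(_RR)M$ separately in each case, for a torsionfree divisible $M$. In the bounded (low) case, $\phi(_RR) = 0$ and there is $r \in R^0$ with $r\phi(M) = 0$; torsionfreeness of $M$ forces $\phi(M) = 0$, so both sides are $0$. In the cobounded (high) case, pick $s \in R^0$ with $s|x \leq \phi$, so that $s \in \phi(_RR)$ and hence $Rs \subseteq \phi(_RR)$. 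Since $R$ is a domain the left annihilator of $s$ is zero, so the preliminaries' notion of divisibility collapses to $sM = M$; therefore $M = sM \subseteq \phi(_RR)M \subseteq M$ and also $M = sM \subseteq \phi(M) \subseteq M$, giving $\phi(M) = M = \phi(_RR)M$.

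I don't anticipate a genuine obstacle here: the entire load is carried by Prop.\,\ref{flatstardom}, which reduces the verification of flatness to a two-case check that essentially reads off from the very definitions of torsionfree and divisible. The one point worth being careful about is the convention for \emph{divisible}: over a domain the definition recalled from \cite[Def.3.16]{LMR} collapses to $sM = M$ for all $s \in R^0$ (since $\lann(s) = 0$), which is exactly what the cobounded case needs.
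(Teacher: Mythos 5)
Your proposal is correct and follows essentially the same route as the paper: part (1) is handled by citing van de Water, and part (2) verifies Zimmermann's criterion $\phi(M)=\phi(_RR)M$ by the same dichotomy (the paper splits on $\phi(_RR)\neq 0$ versus $\phi$ low, which over a $(\flat\sharp)$ domain is exactly your cobounded/bounded split), using divisibility by a nonzero element of $\phi(_RR)$ in the one case and boundedness plus torsionfreeness in the other. The only cosmetic difference is that the paper invokes Rem.\,\ref{flatsharp}(1) and Dichotomy I directly rather than Prop.\,\ref{flatstardom}, and it notes the left-right symmetry at the end.
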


\begin{proof}
For (1), see the proof of \cite[Thm.1]{Wat}. For (2), suppose, $N$ is torsionfree divisible. To show it is flat, we are going to verify that for every unary pp formula $\phi$, we have  $\phi(N)=\phi(_RR) N$ (note, the inclusion from right to left is always true).

If $\phi(_RR)\not= 0$, this follows from divisibility: let $0\not= r\in  \phi(_RR)$, and note that $\phi(N)\subseteq N \subseteq rN \subseteq \phi(_RR) N$.

Assume now, $\phi(_RR)= 0$, i.e., $\phi$ is low.
By Rem.\,\ref{flatsharp}(1), $\phi$ is not high, so it must be bounded, i.e., annihilated by a scalar. As $N$ and $_RR$ ar both  torsionfree, this formula then defines the trivial subgroup $0$ in both of them. In particular, $\phi(N)=\phi(_RR) N$, as desired. By symmetry, the same holds on the right.
\end{proof}

 \subsection{Ore domains}
 It is well known that a left Ore domain $R$ has a left division ring of fractions, $Q$, and that $_RQ$ is an injective envelope of $_RR$. So $_RQ = \E(_RR)$ is a torsionfree and absolutely pure (even injective) $R$-module. (It is always flat on the \emph{other} side, \cite[Exercise 10.20]{LMR} or \cite[Thm.3.6]{G}.)
 
Cor.\,\ref{highdiv}(14) \& (15) show  that 
 a domain $R$ is right  Ore if and only if, for all nonzero ring elements $r$ and $s$, the (left) formula $r\,|\,sx$  is not low. Similarly on the left.

\begin{fact} 
  \cite[Lemma 5.2]{PPR}. The following are equivalent for any domain $R$.
	\begin{enumerate}[\rm (i)] 
	\item $R$ is a left Ore domain.
	\item $\E(_RR)$ is torsionfree.
	\item There is a nonzero torsionfree and absolutely pure left $R$-module.
	\end{enumerate}
 \end{fact}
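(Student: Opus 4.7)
The plan is to prove the cycle (i) $\Rightarrow$ (ii) $\Rightarrow$ (iii) $\Rightarrow$ (i). The first two implications are mostly standard. Given (i), classical left Ore localization at $R^0$ produces a division ring $Q$ containing $R$; the left $R$-module ${_RQ}$ is torsionfree and divisible, hence injective by van de Water's theorem (Lemma \ref{vandeWater}(1)), and the inclusion ${_RR} \subseteq {_RQ}$ is essential (every nonzero $q \in Q$ has some $s \in R^0$ with $sq \in R^0$), so ${_RQ} = \E(_RR)$ is torsionfree. For (ii) $\Rightarrow$ (iii), the module $M := \E(_RR)$ is always injective (hence absolutely pure) and contains $1 \neq 0$; if (ii) holds, it is also torsionfree, providing the required witness.

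The heart of the argument is (iii) $\Rightarrow$ (i), which I would prove by contrapositive. Suppose $R$ is not left Ore: there exist $a, b \in R^0$ with $Ra \cap Rb = 0$. Given a nonzero torsionfree absolutely pure left module $M$, I will derive a contradiction. The strategy is to construct a unary pp formula whose elementary dual vanishes on $R_R$, so that absolute purity (via Fact \ref{fact}(2)) forces the formula to hold on all of $M$, but which is algebraically incompatible with torsionfreeness. The right choice is
\[
\phi(x) \;:=\; \exists y \,(ay \eq x \,\wedge\, by \eq 0).
\]
Unfolding the standard recipe for elementary duality yields $\D\phi(x) \sim \exists z\,(xa + zb \eq 0)$, so $\D\phi(R_R) = \{u \in R : ua \in Rb\}$. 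The failure of left Ore forces this to be zero: any $u$ with $ua \in Rb$ satisfies $ua \in Ra \cap Rb = 0$, hence $ua = 0$, whence $u = 0$ by the domain hypothesis (and $a \neq 0$). Fact \ref{fact}(2) then yields $\phi(M) = \ann_M \D\phi(R_R) = M$. Picking any $0 \neq m \in M$, one finds $y \in M$ with $ay = m$ and $by = 0$; torsionfreeness together with $b \neq 0$ forces $y = 0$, contradicting $ay = m \neq 0$.

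The main obstacle is pinpointing the right pp formula. A natural first guess is a basic RD formula such as $a \,|\, bx$, but by Cor. \ref{highdiv}(3) its lowness is governed by $aR \cap bR$, a right-sided intersection tracking the right Ore condition, not the left one we want. The formula $\phi$ above instead encodes the failure of left Ore directly into the structure of its dual: $\D\phi(R_R)$ is essentially the set of $u \in R$ with $ua \in Rb$, which vanishes precisely when $Ra \cap Rb = 0$. Once $\phi$ is in hand, everything else is mechanical.
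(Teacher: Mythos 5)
Your proof is correct, but it is genuinely more than what the paper does: the paper treats this as a quoted result, citing \cite[Lemma 5.2]{PPR} for the equivalence of (i) and (iii), and only supplements it (in the remark that follows) with the observation that (ii) can be added because the left division ring of fractions \emph{is} $\E(_RR)$, together with a short direct argument that $\E(_RR)$ is torsionfree under the left Ore hypothesis: a torsion element $a\neq 0$ with $ra=0$, $r\in R^0$, has $0\neq sa\in R$ by essentiality, and a common left multiple $s's=r'r\neq 0$ then gives $s'(sa)=r'(ra)=0$, producing zero divisors. Your route to (i)$\Rightarrow$(ii) instead builds the left quotient division ring $Q$, checks $_RQ$ is torsionfree and divisible, invokes van de Water (Lemma \ref{vandeWater}(1)) for injectivity, and identifies $Q=\E(_RR)$ by essentiality; this is heavier machinery but equally valid and arguably more informative, since it exhibits the hull explicitly. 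The real added value is your self-contained (iii)$\Rightarrow$(i), which the paper leaves to \cite{PPR}: your formula $\phi(x)=\exists y\,(ay\eq x\wedge by\eq 0)$ has dual $\exists z\,(xa+zb\eq 0)$ under the paper's conventions, so $\D\phi(R_R)=\{u: ua\in Rb\}=0$ when $Ra\cap Rb=0$, and Fact \ref{fact}(2) forces $\phi(M)=M$, which collides with torsionfreeness since $\phi(M)=a\,M[b]=0$; all steps check out, and your remark that a basic RD formula $a|bx$ would only see the right Ore condition (Cor.\,\ref{highdiv}) is exactly the right diagnosis of why this particular $\phi$ is needed. In short: correct, with a duality-based argument for (iii)$\Rightarrow$(i) that the paper does not spell out, and a localization-based argument for (i)$\Rightarrow$(ii) where the paper's remark gives a shorter bare-hands one.
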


 \begin{rem} 	 \cite[Lemma 5.2]{PPR} did not state {\rm (ii)}, which follows easily when realizing that the left division ring of fractions \emph{is} $\E(_RR)$. For a direct proof of this, assume, $\E(_RR)$ had torsion, i.e., an element $0\not= a$ with $ra=0$ for some $r\in R^0$. By essentiality, there is $s\in R$ with $0 \not= sa \in R$. By the left Ore condition, $s's = r'r \not= 0$ for some $s', r' \in R$. Thus the assumption would yield zero-divisors: $s'(sa) = r' (ra) = 0$, contradiction.
 \end{rem}

Next we characterize  $(\flat\sharp)$ domains as the two-sided Ore domains, and also as the domains with no high-low formulas. Recall that the last two conditions below are left-right symmetric (and equivalent). 

\begin{thm}\label{Ore}  The following are equivalent for any domain $R$.\stepcounter{enumi}
	\begin{enumerate}[\rm (i)]
	\item $R$ is a two-sided Ore domain.
	\item $\E(_RR)$ is flat, hence $(\flat\sharp)$.
	\item $\E(R_R)$ is flat, hence $(\flat\sharp)$.
	\item $R$ is one-sided $(\flat\sharp)$.
	\item $R$ is two-sided $(\flat\sharp)$.
	\item There is no (left or right) high-low formula.
	\item The unary pp lattices are of the shape shown at the outset of this section.
	\end{enumerate}

 \end{thm}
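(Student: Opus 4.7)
The plan is to close the short cycle (i)$\Rightarrow$(ii),(iii)$\Rightarrow$(iv)$\Rightarrow$(vi)$\Rightarrow$(i), with (v) and (vii) appearing as byproducts. The implications (ii)$\Rightarrow$(iv), (iii)$\Rightarrow$(iv), and (v)$\Rightarrow$(iv) are tautological, since each of (ii), (iii), (v) explicitly exhibits a one-sided $(\flat\sharp)$ module. The step (iv)$\Rightarrow$(vi) is just Remark \ref{flatsharp}(1): a $(\flat\sharp)$ module on either side immediately rules out any high-low formula on either side.

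For (i)$\Rightarrow$(ii),(iii) (and hence (v)), suppose $R$ is two-sided Ore and put $S := R^0$. Then $S$ is both a left and a right denominator set, the classical rings of fractions $S^{-1}R$ and $RS^{-1}$ coincide in a single division ring $Q$, and the Fact preceding the theorem, together with its left--right analogue, identifies $_RQ$ with $\E(_RR)$ and $Q_R$ with $\E(R_R)$. That $_RQ$ is flat is precisely the content of Remark \ref{10.17}; by the left--right symmetric version of Lemma \ref{fract}, $Q_R$ is flat as well. Since $Q$ is injective on both sides, each of these modules is nonzero flat-and-absolutely-pure, hence $(\flat\sharp)$, which establishes (ii), (iii), and (v) at one stroke. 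For the converse (vi)$\Rightarrow$(i), assume $R$ is not right Ore, so that there exist nonzero $r,s\in R$ with $rR\cap sR=0$, and consider the unary left pp formula $\phi := r\,|\,sx$. By Cor.\,\ref{highdiv}(14), $\phi$ is high because $r\neq 0$; by Cor.\,\ref{highdiv}(15), $\phi$ is low because $s\neq 0$ and $sR\cap rR=0$. This high-low formula contradicts (vi). Invoking the left--right symmetry of (vi) (Rem.\,\ref{reformulation}(6)) and running the same argument on the other side forces $R$ to be left Ore too, so $R$ is two-sided Ore.

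Finally, once (i)--(vi) are established as equivalent, $R$ is a $(\flat\sharp)$ domain, so (vii) is exactly the picture recorded in Proposition \ref{flatstardom}; conversely, a lattice of the depicted shape visibly contains no high-low formula, which by the preceding equivalences returns us to (i) and (vi).

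The main obstacle is the single substantive converse (vi)$\Rightarrow$(i); everything else is formal manipulation of the dichotomies or a direct appeal to the classical quotient ring of an Ore domain. The crux is having the right dictionary between failure of the Ore condition for a pair $(r,s)$ and the simultaneous highness/lowness of the basic RD formula $r\,|\,sx$. Cor.\,\ref{highdiv}(14)--(15) provides exactly that dictionary, which turns what might otherwise be a delicate argument into a one-line witness construction.
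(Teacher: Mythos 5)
Your proof is correct and follows essentially the same route as the paper: (i) gives flatness of the two-sided division ring of fractions via Rem.\,\ref{10.17} and its mirror image, (iv)$\Rightarrow$(vi) is Rem.\,\ref{flatsharp}(1), and (vi)$\Rightarrow$(i) comes from the high-low criterion for $r\,|\,sx$ plus left-right symmetry. The only (immaterial) difference is that you invoke Cor.\,\ref{highdiv}(14)--(15) directly where the paper cites their consequence Cor.\,\ref{RDhi-lo}(4), and the identification $_RQ=\E(_RR)$ comes from the discussion around (not literally from) the Fact you cite.
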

 
\begin{proof} 
 
 (i)$\implies$(ii) \& (iii)$\implies$(v). Rem.\,\ref{10.17}  says that the right division ring of fractions (whose underlying right $R$-module is $\E(R_R)$) over a right Ore domain is flat as a left $R$-module.  Over a two-sided Ore domain, both injective hulls of $_RR$ and of $R_R$ can be identified and carry the same division ring structure $D$. Consequently, $D$ is flat on both sides, i.e., (ii) \& (iii), hence also (v).
 
 
 Further,  (iv)$\implies$(vi) by Rem. \ref{flatsharp}(1).
(v)$\implies$(iv) and (vi)$\iff$(vii) being trivial,  
  it suffices to verify (vi)$\implies$(i). 
  
 Since by (vi), no formula  is high-low, Cor.\,\ref{RDhi-lo}(4) implies that $R$ is a right Ore domain. As (vi) is symmetric, we get the same on the left.
\end{proof}

\begin{rem}
Here is another proof of\/   {\rm (iv) $\Rightarrow$ (i)}. Assume $R$ is left $(\flat\sharp)$ with a flat and absolutely pure nonzero left $R$-module  $F$. Then, for every pp formula, $\phi(F)=\phi(_RR)F= \ann_F\D\phi(R_R)$ by properties of flat and absolutely pure modules. If $\phi$ is of the form $r|sx$ with $r, s \not=0$, then $\D\phi= \exists(x\eq zs \wedge zr\eq 0)$ and, as $R$ is a domain, $\D\phi(R_R) = 0$. But then $\ann_F\D\phi(R_R)=F$, hence $F=\phi(_RR)F$, and so, as $F\not= 0$, no such formula can be low. By\/ {\rm Cor.\,\ref{highdiv}(14) \& (15)}, $R$ is right Ore. That $R$ is also left Ore follows from the Fact above, since every flat module is  torsionfree.
\end{rem}

Clearly the theorem applies to commutative domains.

\section{Ulm}\label{Ulm}
\subsection{Ulm functors}

We define the Ulm functor $\ulm$  as that which sends a module  to the intersection of all of its high subgroups. (Its functoriality follows from that of pp formulas.)

\begin{rem}
Let $r\in R$. If  $\phi$ is a high formula, then so is $r^{-1}\phi$. This shows that the intersection of all high subgroups of a module is in fact a submodule. 
\end{rem}

This justifies  the following terminology and allows us, as in Ulm theory for abelian groups, to iterate the Ulm functor---for it is, in general, not idempotent.

\begin{definition} \cite{MR???}
 \begin{enumerate}[\rm (1)] 
 \item The  \texttt{Ulm functor} is the functor $\ulm: \RMod \to \RMod$ that sends a module to the  intersection of all of its high subgroups. 
\item  Starting from $\ulm^1 = \ulm$ (or from $\ulm^0 = 1$), iterate $\ulm$ by transfinite induction, setting  $\ulm^{\alpha+1} = \ulm\ulm^{\alpha}$ for every ordinal $\alpha$ and $\ulm^\delta = \bigcap_{\alpha<\delta}\ulm^\alpha$ for every limit ordinal $\delta$.
\item Evaluating these at a module $M$, one obtains its  \texttt{Ulm sequence}\/ $\ulm(M) = \ulm^1(M) \supseteq \ldots \supseteq\ulm^\alpha(M) \supseteq \ldots $, whose first term is called the  \texttt{first Ulm submodule} of  $M$.
\item The smallest ordinal $\tau$ such that $\ulm^{\tau+1}(M) = \ulm^{\tau}(M)$ is called the\/ \texttt{Ulm length} of $M$. Write $\infulm(M)$ for this last term of the Ulm sequence and refer to it as  \texttt{the least (or the last) Ulm submodule} of $M$.

 \item The functor that sends every module to its last Ulm submodule is denoted $\infulm$.
 \item A module $M$ is called \texttt{red\,$\ulm$ced} (or\/ $\ulm$-\texttt{reduced}) if $\ulm(M)=0$, and  \texttt{weakly red\,$\ulm$ced} (or \texttt{weakly} $\ulm$-\texttt{reduced}) if $\infulm(M)=0$. 
 \end{enumerate}
\end{definition}

\begin{rem}\label{UlmFlatSharp}{\texttt{}}

 \begin{enumerate}[\rm (1)] 
 \item Absolutely pure modules have Ulm length $0$.
 \item As pp formulas are existential, $N\subseteq M\implies\ulm(N)\subseteq\ulm(M)$.
 \item Consequently, $\ulm(M)$ contains every Ulm length 0 submodule of $M$.
 \item In particular, red\,$\ulm$ced modules contain no Ulm length 0 submodule (other than $0$).
 \item Every module has a well-defined Ulm length for plain cardinality reasons. But it may differ from module to module, as we shall see, which is why the functor $\infulm$ cannot simply be defined as the intersection of the functors $\ulm^{\alpha}$.
 \item $\ulm^\alpha$ commutes with direct sum and direct product, in particular, it is a p-functor in the sense of Zimmermann \cite{Zim}, for all $\alpha$ (an ordinal or $\infty$).
 \item {\rm Cor.\,\ref{highdiv}(6)} shows that always $$\ulm(M) \subseteq \bigcap_{r\in\Sl} rM.$$
\item By {\rm Cor.\,\ref{highfilterFlatSharp}(2)}, for any module $M$ over a  $(\flat\sharp)$ domain $R$,
 $$\ulm(M) = \bigcap_{r\in R^0} rM.$$
\item In case of abelian groups, the here defined Ulm sequence is therefore  the classical Ulm sequence  and so is length and related terminology of abelian group theory.
 \item Beware: a red\,$\ulm$ced abelian group, that is,  a group with first Ulm submodule $0$, is reduced in the usual sense (i.e., contains no non-trivial divisible subgroup), cf.\ {\rm Rem.\ref{firstUlm}}, but not conversely. Moreover, it is easy to see that the Ulm lengths of an abelian group  and  its  reduced `part' coincide.
 \end{enumerate}
\end{rem}

Remark (7) above suggests the next definition and question.

\begin{definition}
 Let $\ulmd : \RMod\to\Ab$ be the functor that sends a module $M$ to $\bigcap_{r\in\Sl} rM$, the intersection of all its subgroups definable by high divisibility formulas (`high' multiples of $M$).
\end{definition}

\begin{quest}
Over what rings is $\ulm = \ulmd$? Over what RD rings is $\ulm = \ulmd$?
\end{quest}

Remark (8) above states that this is the case for $(\flat\sharp)$ domains. Part of the question is to find out over which rings $\ulmd : \RMod\to\RMod$.
 A common trick using the left Ore condition (usually applied over domains) shows that this is the case over right uniform rings, cf.\,\cite[Lemma 8.2.10]{P2}:

\begin{rem} Let $M$ be a (left) module over  a right uniform ring.

Then $\ulmd(M)$ is a submodule of $M$.
\end{rem}

Next we isolate a condition equivalent to every high formula lying above a high divisibility formula. The condition $\ulm = \ulmd$ seems weaker: it is only needed that the filter generated by the  high formulas is the same as the filter generated  by the high divisibility formulas, which is to say that every high formula lies above a \emph{finite conjunction} of high divisibility formulas. Recall from Rem\,\ref{types}(3) that every (finite) conjunction of high divisibility formulas lies above a single one if and only if the ring is right uniform.

\begin{lem}The following are equivalent for any ring $R$.
\begin{enumerate}[\rm (1)]
 \item  For all\/  $r, s\in R$, if\/ $\lann(r)\subseteq \lann(s)$, then $s\Sl\cap\, rR\not= \emptyset$.
 \item Every high basic RD formula in $_R\Lambda$ lies above a high divisibility formula $t|x$.
\end{enumerate}
\end{lem}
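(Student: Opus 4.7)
The plan is to translate each side of the equivalence into the same ring-theoretic statement by combining three facts already at hand: the characterization of when a basic RD formula $r\,|\,sx$ is high (Cor.\,\ref{highdiv}(2), which says this is iff $\lann(r)\subseteq \lann(s)$), the characterization of when a divisibility formula $t\,|\,x$ is high (Cor.\,\ref{highdiv}(6), which says this is iff $t\in\Sl$), and the ring-theoretic description of when a divisibility formula lies below a basic RD formula (Rem.\,\ref{T}(1) applied with $T=\Sl$, which says that there is $t\in\Sl$ with $t\,|\,x\leq r\,|\,sx$ iff $s\Sl\cap rR\neq\emptyset$).

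Given these, the argument is essentially a substitution. For (2)$\Rightarrow$(1): suppose $r,s\in R$ with $\lann(r)\subseteq\lann(s)$. By Cor.\,\ref{highdiv}(2) the basic RD formula $r\,|\,sx$ is high, so by (2) there is $t\in R$ with $t\,|\,x$ high and $t\,|\,x\leq r\,|\,sx$. By Cor.\,\ref{highdiv}(6), $t\in\Sl$. Hence by Rem.\,\ref{T}(1), $s\Sl\cap rR\neq\emptyset$, which is (1). For (1)$\Rightarrow$(2): take any high basic RD formula $r\,|\,sx$. By Cor.\,\ref{highdiv}(2), $\lann(r)\subseteq\lann(s)$, so (1) yields $s\Sl\cap rR\neq\emptyset$. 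By Rem.\,\ref{T}(1) we obtain $t\in\Sl$ with $t\,|\,x\leq r\,|\,sx$, and Cor.\,\ref{highdiv}(6) tells us that $t\,|\,x$ is high, proving (2).

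There is no real obstacle here; the lemma is a repackaging, and the only thing worth flagging is the need to check that the trivial cases ($r=0$ or $s=0$) are harmless. If $s=0$ then $r\,|\,sx\sim x\eq x$ is vacuously above the high formula $1\,|\,x$ (and $s\Sl\cap rR\ni 0\cdot 1=0$... so one should use a nonzero witness; but if $r=0$ too, $1\in\Sl\cap rR$ only when $rR=R$, so one takes the trivial high formula $1|x$ which is always below $x\eq x$). If $r=0$ and $s\neq 0$, then $\lann(r)=R\not\subseteq\lann(s)$ so the formula is not high (Cor.\,\ref{highdiv}(9)) and there is nothing to check. Thus one may safely restrict attention to $r,s\in R^0$ for the nontrivial content, where the three-fact translation above goes through verbatim.
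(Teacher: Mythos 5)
Your argument is correct and is exactly the paper's proof, which simply cites Cor.\,\ref{highdiv}(2), Cor.\,\ref{highdiv}(6), and Rem.\,\ref{T}(1) with $T=\Sl$ and leaves the substitution you spell out to the reader. Your worry about degenerate cases is unnecessary (and the aside about needing a ``nonzero witness'' is off the mark): when $s=0$ the intersection $s\Sl\cap rR=\{0\}\cap rR$ is automatically nonempty since $0\in rR$, and $r|sx\sim x\eq x$ lies above the high formula $1|x$, so the three-fact translation already covers all $r,s\in R$ without case distinctions.
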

\begin{proof}
By Cor.\,\ref{highdiv}(6),  Rem.\,\ref{T}(1), and Cor.\,\ref{highdiv}(2).\end{proof}

\begin{lem}\label{cohRD}
 Suppose $R$ is  right coherent and left uniform  (i.e., $_RR$ is uniform), and $\phi\in {_R\Lambda}$ is a high formula.
 
 Then either $t|x\leq\phi$ for some $t\in \Sl$ or  there is $t\in R^0$ such that $t\phi$ is low.
\end{lem}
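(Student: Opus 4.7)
The plan is to case-split on whether $\phi$ is itself low. If it is, the second alternative holds immediately with $t=1$. Otherwise Dichotomy II forces $\phi$ cobounded, so in particular $\phi(_RR)\neq 0$. (The ``high'' hypothesis on $\phi$ is not actually used directly in the argument: a bounded $\phi$ would already yield alternative (b) via its bounding scalar, so this case-split exhausts matters in both directions, though the substantive content concerns $\phi$ high and cobounded.)

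My first main step would be to invoke right coherence of $R$, which gives that the pp-definable right ideal $\phi(_RR)$ is finitely generated: I may then write $\phi(_RR) = s_1R + \cdots + s_nR$ with all $s_i\in R^0$ (after dropping any zero generators). A short direct check shows that each $s_i\in\phi(_RR)$ in fact upgrades to $s_i|x \leq \phi$: writing $\phi$ as $\mtx A\,|\,\mtx b\,x$, a witness $\br y\in{}^kR$ of $\mtx A\br y = \mtx b s_i$ in $R$ turns, in any left module $M$ and any $m\in M$, into the witness $\br y m$ of $\phi(s_im)$. If any $s_i$ lies in $\Sl$, then by Cor.~\ref{highdiv}(6) the formula $s_i|x$ is a high divisibility formula below $\phi$, and the first alternative of the conclusion holds.

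Otherwise every $s_i$ is a right zero divisor, so I would pick $t_i\in R^0$ with $t_is_i=0$ for each $i$. Here left uniformity enters: since $_RR$ is uniform, any two nonzero left ideals meet nontrivially, and an easy induction on $n$ produces a nonzero $t\in\bigcap_{i=1}^n Rt_i$. Writing $t = u_it_i$, one has $ts_i = u_it_is_i = 0$ for every $i$, so
\[
(t\phi)(_RR) \;=\; t\cdot\phi(_RR) \;=\; \sum_{i=1}^n ts_iR \;=\; 0,
\]
which by Rem.~\ref{reformulation}(1) says precisely that $t\phi$ is low, giving the second alternative. The only step needing real thought is the combinatorial one of finding a single nonzero $t$ that annihilates all the $s_i$ on the left simultaneously, and this is delivered precisely by left uniformity of $R$.
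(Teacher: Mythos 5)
Your proof is correct and follows essentially the same route as the paper's: right coherence gives $\phi(_RR)$ finitely generated, each nonzero generator $s$ yields $s|x\leq\phi$, and if no generator lies in $\Sl$ then left uniformity produces a single $t\in R^0$ annihilating all generators on the left, so $(t\phi)(_RR)=t\,\phi(_RR)=0$ and $t\phi$ is low. The only cosmetic differences are your preliminary case split on $\phi$ low (the paper's argument covers that case uniformly) and your intersecting the principal ideals $Rt_i$ instead of the full left annihilators $\lann(s_i)$, which amounts to the same use of uniformity; your observation that the highness hypothesis is not actually needed matches the paper's proof as well.
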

\begin{proof} 
Let $\phi$ be high. By right coherence, $\phi(_RR)$ is f.g., \cite[Thm.2.3.19]{P2}, say $\phi(_RR)=\sum_{i<n} t_i R$. Note, $t_i|x\leq\phi$, since $a=t_ib\in\phi(_RR)M\subseteq \phi(M)$ for all $M$ and all $b\in M$. Hence, if $t_i\in\Sl$, then the first alternative takes place with $t=t_i$. Assume now that none of the $t_i$ are in $\Sl$. Then  $\lann(t_i)\not= 0$ $(i<n)$ and, by left uniformity, there is some $t\in R^0$ in all of them. Thus, $(t\phi)(_RR)=t\phi(_RR)=0$, as desired.
\end{proof}

\begin{rem}
 Let $R$ be a ring with no high-low formulas.
 
 If we could always choose $t\in\Sl$ also in the second alternative of the previous statement, we would have that $\phi$ is bounded, hence not high, which would show that (the high) $\phi$ does lie above some $t|x$ with  $t\in \Sl$. Namely, if $t\phi$ is low, it must be bounded, by hypothesis, whence we get $t'\in R$ with $t't\phi\sim x\eq 0$. As $t\in\Sl$, we obtain $t't\in R^0$, hence $\phi$ bounded.
\end{rem}

Together with Cor.\,\ref{uniformRD}, all this may lead one to conjecture that the following question have an affirmative answer.

\begin{quest}
 Is over  all right coherent and left uniform RD rings $\ulm = \ulmd$?
\end{quest}

\begin{lem}\textbf{}
 \begin{enumerate}[\rm (1)]
 \item Suppose $\ulm(M)=\ulmd(M)$.   
 Then $M/\ulm(M)$ is red\/$\mtx u$ced, i.e., 
$$\ulm(M/\ulm(M)) =0.$$
\item Over general rings we have the same whenever\/ $\ulm(M)$ is pure in $M$.
\item  If $\ulm = \ulmd$ (in particular, if $R$ is a ($\flat\sharp$) domain), then all Ulm factors are red\/$\mtx u$ced.
\end{enumerate}
\end{lem}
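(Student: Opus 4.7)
The plan is to treat the three parts in order, with (1) doing the real work and (2) being a clean alternative hypothesis yielding the same conclusion, and (3) reducing to (1) applied at each step of the Ulm filtration.

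For (1), fix $a\in M$ with $a+\ulm(M)\in\ulm(M/\ulm(M))$ and aim to show $a\in\ulm(M)=\ulmd(M)$, i.e.\ $a\in rM$ for every $r\in\Sl$. The key observation is that for $r\in\Sl$ the divisibility formula $r|x$ is high (Rem.\,\ref{fundrem}(3) / Cor.\,\ref{highdiv}(6)), so in particular $\ulm(M)\subseteq (r|x)(M)=rM$. Now from $a+\ulm(M)\in (r|x)(M/\ulm(M))=r(M/\ulm(M))$ we get $b\in M$ with $a-rb\in\ulm(M)\subseteq rM$, whence $a\in rM$. Intersecting over $r\in\Sl$ gives $a\in\ulmd(M)$, which equals $\ulm(M)$ by hypothesis; so $a+\ulm(M)=0$. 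Note we only need the \emph{high divisibility} formulas on the quotient side, which is what makes the hypothesis $\ulm(M)=\ulmd(M)$ enough.

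For (2), I would sidestep the divisibility crutch and argue uniformly for \emph{all} high $\phi$. The standard companion to the purity identity $\phi(N)=N\cap\phi(M)$ is that, for $N\subseteq M$ pure, $\phi(M/N)=(\phi(M)+N)/N$ for every pp $\phi$; this is the well-known consequence of purity in the quotient direction (cf.\ the preliminaries on purity in \S\ref{1}). Applying this with $N=\ulm(M)$ and any high $\phi$, and using $\ulm(M)\subseteq\phi(M)$ from the very definition of $\ulm$, one gets $\phi(M/\ulm(M))=\phi(M)/\ulm(M)$. Intersecting over all high $\phi$ and pulling the intersection through the quotient by $\ulm(M)$ (legitimate because $\ulm(M)$ sits inside each term) yields
\[
\ulm(M/\ulm(M))\;=\;\Bigl(\bigcap_{\phi\text{ high}}\phi(M)\Bigr)\big/\ulm(M)\;=\;\ulm(M)/\ulm(M)\;=\;0.
\]

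For (3), observe that if the functorial identity $\ulm=\ulmd$ holds, then in particular $\ulm(N)=\ulmd(N)$ for every submodule $N$, so (1) applies to $N=\ulm^\alpha(M)$ for every ordinal $\alpha$ (including limits, where $\ulm^\alpha(M)=\bigcap_{\beta<\alpha}\ulm^\beta(M)$ is still a submodule). Since $\ulm(\ulm^\alpha(M))=\ulm^{\alpha+1}(M)$, part (1) gives $\ulm\bigl(\ulm^\alpha(M)/\ulm^{\alpha+1}(M)\bigr)=0$, i.e.\ every Ulm factor is red$\ulm$ced. The $(\flat\sharp)$-domain clause is then immediate from Rem.\,\ref{UlmFlatSharp}(8).

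The one step that is not quite cosmetic is the quotient-side purity identity used in (2); it is standard but should be recalled explicitly (or replaced by a short witness-lifting argument, using that a pp formula $\exists\br y(\mtx A\br y\eq\mtx B\br x)$ satisfied by $a+N$ in $M/N$ lifts to a witness in $M$ up to a correction in $N$, which under purity can be absorbed). Everything else is a bookkeeping consequence of the definition of $\ulm$ as an intersection of high pp subgroups.
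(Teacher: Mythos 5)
Your proposal is correct and follows essentially the paper's own argument: part (1) is the same elementwise computation with the high divisibility formulas $r|x$, $r\in\Sl$, part (2) is the same purity-lifting argument (you phrase it via the quotient identity $\phi(M/N)=(\phi(M)+N)/N$ where the paper lifts cosets elementwise, a cosmetic difference), and part (3) is obtained, as in the paper, by applying (1) at each stage of the Ulm filtration.
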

\begin{proof} Set $U:=\ulm(M)$.

 (1) (and (3)).  If $a+U\in \ulm(M/U)\subseteq \ulmd(M/U)$ (by Rem.\,\ref{UlmFlatSharp}(7) above), then for all $r\in\Sl$ there is $b\in M$ such that $a-rb\in U\subseteq rM$, hence $a\in rM$ for all such $r$, i.e., $a\in U$ by hypothesis. Thus $a+U =0$, as desired.
 
 (2). If $a+U\in \ulm(M/U)$, then $a+U\in \gamma(M/U)$ for every high $\gamma$. As $U$ is pure in $M$, the coset $a+U$ has a preimage in $\gamma(M)$, hence w.l.o.g., $a\in\gamma(M)$. As $\gamma$ varies, we see that $a\in U$ and hence $a+U =0$ again.
\end{proof}


Much of this entire section is motivated by the search for conditions under which the Ulm sequence stops after the first step, that is, by the question what modules may have Ulm length $\leq 1$. It is not hard to see that this is equivalent to the quest for witnesses inside the first Ulm submodule for the truth of high formulas. To give but a simple example, every element in $U =\bigcap_{r\in R^0} rM$ is divisible by every such ring element $r$, but the witness for that divisibility (the divisor) may not live inside $U$. If it always does, $U$ is divisible in its own right, i.e.,  $U =\bigcap_{r\in R^0} rU$.

Before we  get there, we consider the case where  the Ulm sequence stops before it even starts.

\subsection{Ulm length $0$} Absolutely pure modules clearly have Ulm length $0$. But they are not the only ones. 

We collect some simple observations, mostly based on the fact that, just like any pp formula, high formulas are preserved by homomorphism.

\begin{rem}\label{Ulm0}
\begin{enumerate}[\rm (1)]
\item  The class of Ulm length $0$ modules is closed under direct product and sum and under epimorphic images.
\item As $1\in R$ can be sent anywhere, over a left absolutely pure ring, all modules have Ulm length $0$.
 \item Ulm length $0$ is axiomatized by the closure of all pp pairs $x\eq x/\gamma$ where $\gamma$ is a high formula, so the Ulm length $0$ modules form a definable subcategory (whose dual is, note, the torsionfree class for injective torsion from \cite{MR}, see \cite{MR???}).
 \item  Every module elementarily equivalent to an Ulm length $0$ module has Ulm length $0$.
 \item Over indiscrete rings, where all modules are elementarily  equivalent, \cite{PRZ2}, all modules have Ulm length $0$.
 \item There are indiscrete rings that are not (v.N.) regular (see  \cite{PRZ2} or \cite{P2}). Over such rings there are (Ulm length $0$) modules that are not absolutely pure.
 \item Not every Ulm length $0$ module is elementarily equivalent to an absolutely pure module.
 
 Namely, over left noetherian rings, any module elementarily equivalent to an injective (=absolutely pure) module is injective,  \cite[Thm.3.19 (and Prop.18)]{ES}. Consider a QF 
  ring that is not semisimple. By self-injectivity, all modules have  Ulm length $0$ modules, yet not all are (elementarily equivalent to an) injective.
   \item Over domains, every Ulm length $0$ module is divisible (as nontrivial divisibility formulas $r|x$ are high).
   \item 
Divisible modules over RD rings are absolutely pure,  \cite[Prop.2.4.16]{P2}. 
 Thus {\rm (8)} yields:
\item  
  Ulm length $0$ modules over RD domains are absolutely pure, and
  
\item   pure injective Ulm length $0$ modules over RD domains are injective.
\end{enumerate}
 \end{rem}

\begin{quest}
Over what other rings are pure injectives of Ulm length $0$ injective? 
\end{quest}

Conclude  with the converse to (2) above.

\begin{prop}\label{u=1} The following are equivalent for any ring $R$.
 
\begin{enumerate}[\rm (i)]
\item All left $R$-modules have Ulm length $0$.
 \item $_RR$ has Ulm length $0$.
 \item $\ulm = 1$ (in {\rm $\RMod$}), i.e., all left high formulas are equivalent to $x\eq x$.
 \item  All right low  formulas are equivalent to $x\eq 0$ (this is equivalent to the torsion radical $\tor$  from \cite{MR???} being $0$ (in {\rm $\ModR$}, see  \cite{MR???}).
 \item  $R$ is left absolutely pure.
\end{enumerate}
\end{prop}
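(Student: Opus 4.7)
The plan is to close the cycle (v)$\Rightarrow$(i)$\Rightarrow$(ii)$\Rightarrow$(iii)$\Rightarrow$(v) and to derive (iii)$\Leftrightarrow$(iv) separately by elementary duality. The first three steps are essentially formal. Implication (v)$\Rightarrow$(i) is already recorded in Rem.\,\ref{Ulm0}(2): if ${_RR}$ is absolutely pure, then $\phi({_RR})=R$ for every high $\phi$, so $1\in\phi({_RR})$; for any module $M$ and any $m\in M$, the homomorphism ${_RR}\to M$ sending $1\mapsto m$ preserves pp and carries $1\in\phi({_RR})$ to $m\in\phi(M)$, giving $\phi(M)=M$. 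Implication (i)$\Rightarrow$(ii) is the specialization $M={_RR}$. For (ii)$\Rightarrow$(iii), the assumption $\ulm({_RR})=R$ forces $\phi({_RR})=R$, and hence $1\in\phi({_RR})$, for every high $\phi$, and the same universality argument then yields $\phi\sim x\eq x$. The equivalence (iii)$\Leftrightarrow$(iv) is immediate from the Duality Lemma \ref{DualityLemma}(1) together with $\D(x\eq x)\sim x\eq 0$: $\phi$ is left high iff $\D\phi$ is right low, and $\phi\sim x\eq x$ iff $\D\phi\sim x\eq 0$.

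The heart of the argument is (iii)$\Rightarrow$(v). It suffices to show that ${_RR}$ is pure in its injective envelope $\E({_RR})$, so I fix a left unary pp formula $\phi$ and $r\in R\cap\phi(\E({_RR}))$ and aim for $r\in\phi({_RR})$. The key device is the shifted formula $r^{-1}\phi=\phi(rx)$, whose evaluation in any module $M$ is $\{b\in M : rb\in\phi(M)\}$. By construction, $1\in(r^{-1}\phi)(\E({_RR}))$ holds exactly when $r\in\phi(\E({_RR}))$, which is our hypothesis. Hence, by the highness test in Rem.\,\ref{fundrem}(2), the formula $r^{-1}\phi$ is high. Now (iii) forces $r^{-1}\phi\sim x\eq x$, so $1\in (r^{-1}\phi)({_RR})$, i.e., $r\in\phi({_RR})$, as required.

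The main obstacle is spotting this use of (iii). The hypothesis on highness is controlled by the single canonical test element $1\in{_RR}$, whereas purity of ${_RR}$ in $\E({_RR})$ is a statement about arbitrary $r\in R$. The maneuver of replacing $\phi$ by $r^{-1}\phi$ converts ``$r$ satisfies $\phi$ in $\E({_RR})$'' into ``$1$ satisfies a new, provably high formula in $\E({_RR})$'', at which point (iii) applies without further work.
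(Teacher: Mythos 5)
Your proof is correct, and for the one nontrivial link it takes a genuinely different route from the paper. The easy steps coincide: (v)$\Rightarrow$(i) via Rem.\,\ref{Ulm0}(2), (i)$\Leftrightarrow$(ii) and (ii)$\Leftrightarrow$(iii) by the ``$1$ can be sent anywhere'' argument, and (iii)$\Leftrightarrow$(iv) by the Duality Lemma. Where you diverge is in closing the cycle: the paper does this at (iv)$\Leftrightarrow$(v) by citing \cite[Prop.\,2.32]{MR} (with a pp-style proof deferred to \cite{MR???}), so its proof of this proposition is not self-contained, whereas you prove (iii)$\Rightarrow$(v) directly. Your device --- given $r\in R\cap\phi(\E({_RR}))$, pass to $r^{-1}\phi$, observe $1\in(r^{-1}\phi)(\E({_RR}))$ so that $r^{-1}\phi$ is high by the test in Rem.\,\ref{fundrem}(2), then use (iii) to get $1\in(r^{-1}\phi)({_RR})$, i.e.\ $r\in\phi({_RR})$ --- establishes purity of ${_RR}$ in $\E({_RR})$ (which, as the paper itself uses elsewhere, e.g.\ in the proof of Cor.\,\ref{abspure}(1), is equivalent to left absolute purity), and it is sound: the reduction to unary pp formulas matches the paper's definition of purity, and $r^{-1}\phi$ is a legitimate pp formula by the preliminaries. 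What your argument buys is a self-contained proof inside this paper's machinery, essentially reproving the relevant direction of the cited result; what the paper's citation buys is brevity and the explicit link of (iv) to the injective torsion radical of \cite{MR???}.
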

\begin{proof} (v)$\implies$(i)$\iff$(ii) has been mentioned.
(ii)$\iff$(iii) is trivial, (iii)$\iff$(iv) is immediate from the duality of high and low, and (iv)$\iff$(v) is \cite[Prop.\,2.32]{MR} (a proof using pp formulas is  given in \cite{MR???}).
\end{proof}

\subsection{Finding witnesses}
 Recall  that a \texttt{$\wedge$-atomic} formula is a finite conjunction of atomic formulas, which, in modules, means a finite system of linear equations over the ring.
 We often split the free variables of a formula into two kinds, $\br x$ and $\br y$. This is to indicate which are to stay free later, $\br x$, and which later may get quantified out, $\br y$. This is  just a notational device. For instance, we may write a $\wedge$-atomic formula $\psi$ as $\psi(\br x, \br y)$ if we intend to consider the pp formula $\phi=\phi(\br x) = \exists \br y \psi(\br x, \br y)$ later.

\begin{definition}[Notation] \label{subgamma}
Let $\gamma$ be a unary pp formula.
 
%
%
 \begin{enumerate}[\rm (1)] 
\item  For an arbitrary tuple of variables, $\br x = (x_0, \ldots, x_{n-1})$,  the formula $\bigwedge_{i<n} \gamma(x_i)$ is denoted by $\br \gamma(\br x)$.

  \item For any pp formula $\psi = \psi(\br x)$, let   $\psi_\gamma$ denote the pp formula $\psi(\br x) \wedge \br \gamma(\br x)$. 
  
 \item  If  $\phi=\phi(\br x)= \exists \br y \psi(\br x, \br y)$ is an existential formula with $\psi(\br x, \br y)$ quanifierfree, $\phi^\gamma$ denotes the formula $\exists \br y \psi_\gamma(\br x, \br y)$, that is, the formula $\exists \br y \, (\psi(\br x, \br y) \wedge \br \gamma(\br x) \wedge \br \gamma(\br y))$.
 
 \item For $\phi$ as in {\rm (3)}, set $\Gamma_\phi = \Gamma_\phi(\br x, \br y) = \{ \psi_\gamma(\br x, \br y) : \gamma\in \Gamma
  \}$, where  
 \item $\Gamma$ denotes the set of high formulas.
  \end{enumerate}
\end{definition}

Note that in (2) and (3), $(\cdot)_\gamma$ and $(\cdot)^\gamma$ do not change the collection of free variables in a formula, they simply make (the witnesses for) $\br y$ satisfy $\gamma$.

\begin{rem}\label{subgammarem} Let $\gamma$ and $\delta$ be unary pp formulas and  $\phi=\phi(\br x)$ and $\chi=\chi(\br x)$  pp formulas in $\br x$.

\begin{enumerate}[\rm (1)]
\item  {\rm (a)}\hspace{0.5em} $\phi_\gamma\wedge\chi_\gamma \sim  (\phi\wedge\chi)_\gamma $\hspace{1em} and\hspace{1em} {\rm (b)}\hspace{0.5em} $\phi_\gamma\wedge\phi_\delta \sim  \phi_{\gamma\wedge\delta}$.

\item 
 {\rm (a)}\hspace{0.5em} $\phi^\gamma\wedge\chi^\gamma  \sim (\phi\wedge\chi)^\gamma$,\hspace{1em}  but in general only\hspace{1em} {\rm (b)}\hspace{0.5em}  $\phi^{\gamma\wedge\delta}\leq \phi^\gamma\wedge\phi^\delta$.


 \item  If\/ $\gamma$ and $\delta$ are high, so is $\delta^\gamma$. 
 
\item In particular, if $a\in \ulm(M)$ and $\phi= \exists \br y \psi(x, \br y)$ is high, then, for every high formula $\gamma$, there is a witness  in $\gamma(M)$ for the truth of $\phi(a)$ in $M$, i.e., there is an $l(\br y)$-tuple $\br b$ in $\gamma(M)$ such that $a\in \psi(M, \br b)$.

\item Given a module $M$ and a matching tuple $\br u$ in $\ulm(M)$, any realization in $M$ of the type $\Gamma_\phi(\br u, \br y)$ (of\/ {\rm Def.\,\ref{subgamma}(4)} above, with $\br u$ substituted for $\br x$) is a witness inside $\ulm(M)$ for the truth of $\phi(\br u)$ in $M$, and consequently, for its truth in $U$. 

\item If $\phi$ is  quantifier-free, $\phi^\gamma$ \emph{is} $\phi_\gamma$.

\item 
\emph{Any} consistent existential formula is consistent with every high formula, for the simple reason that if it is true in a module, being existential, it is also true in the module's injective envelope, where \emph{everything} is high.

\item More interestingly, given an existential formula $\psi(\br x, \br y)$ and a matching tuple $\br a$ in $M$, if $\psi(\br a, \br y)$ is consistent with $M$ (i.e., $\psi(\br a, \br y)$ has a solution for $\br y$ in some extension of $M$), then so is $\psi(a, \br y)\wedge \br\gamma(\br y)$. 

\item In particular, suppose $\phi=\phi(\br x)= \exists \br y \,\psi(\br x, \br y)$ is an arbitrary existential formula with $\psi(\br x, \br y)$ quantifier-free and $\br a$ is a matching tuple  in $\gamma(M)$ with $\gamma$ high.  If $\phi(\br a)$ is consistent with $M$, then so is $\phi^\gamma(\br a)$.

\end{enumerate}
\end{rem}


Ideally we'd like to have (4) simultaneously for \emph{every} high formula $\gamma$, so that witnesses end up in the first Ulm submodule. In general, this cannot be done, as may be seen from the existence of abelian groups with  Ulm length $>1$. But it is possible in pure injective modules, as will be shown in Thm.\ref{length1} below (where the obvious vehicle  are types as in (5)). 

One case where we can work in just the free variables of $\phi$ is the following.

\begin{lem}\label{uniquewitness}
 Suppose the  pp formula  $\phi=\phi(\br x)= \exists \br y \psi(\br x, \br y)$ has \texttt{unique witnesses} in a module $M$, by which we mean, that for every $\br a\in \phi(M)$ there is only one $\br b$ in $M$ witnessing the truth of $\phi$, i.e., $\psi(\br a, M) = \{\br b\}$.
 
 If $\phi$ is high (with unique witnesses), then $\ulm(M)\subseteq \phi(\ulm(M))$.
 
\end{lem}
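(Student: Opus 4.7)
Take $a\in\ulm(M)$ (if $\phi$ is $n$-ary the same argument works with $\br a\in\ulm(M)^n$). The goal is to produce a witness tuple $\br b\in\ulm(M)^{l(\br y)}$ with $\psi(a,\br b)$. The unique-witness hypothesis tells me that as soon as $\phi(a)$ holds in $M$ at all, there is a single candidate $\br b\in M^{l(\br y)}$ with $\psi(a,\br b)$; so the whole proof reduces to checking that this forced $\br b$ already sits coordinatewise inside every high pp subgroup of $M$.

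Fix an arbitrary high unary formula $\gamma$. By Remark \ref{subgammarem}(3), the formula
\[
\phi^\gamma(x)\;=\;\exists\br y\,\bigl(\psi(x,\br y)\wedge\gamma(x)\wedge\br\gamma(\br y)\bigr)
\]
is again high (and still only has $x$ free). Since $a\in\ulm(M)$ and $\phi^\gamma$ is a high unary formula, $a\in\phi^\gamma(M)$; thus there is a tuple $\br b'\in\gamma(M)^{l(\br y)}$ with $\psi(a,\br b')$. But $\br b'$ is then also a witness for $\phi(a)$ in $M$, so the unique-witness hypothesis forces $\br b'=\br b$. Hence every coordinate of $\br b$ lies in $\gamma(M)$. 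Letting $\gamma$ vary over all high formulas yields $\br b\in\ulm(M)^{l(\br y)}$, and therefore $a\in\phi(\ulm(M))$, as required.

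The conceptual point to flag is where uniqueness is doing real work: for a general high $\phi$ without unique witnesses, each high $\gamma$ would furnish its own witness tuple $\br b^\gamma\in\gamma(M)^{l(\br y)}$ for $\phi(a)$, and these tuples need not match, so one cannot in general produce one common tuple of witnesses inside $\bigcap_\gamma\gamma(M)^{l(\br y)}=\ulm(M)^{l(\br y)}$. Uniqueness supplies this matching for free, collapsing every $\br b^\gamma$ to the single $\br b$. This merging step is precisely what fails for arbitrary $\phi$ and is what allows Ulm length to exceed $1$; Theorem \ref{length1} later will rescue the merging in pure injective modules by a compactness argument (realizing the type $\Gamma_\phi$ of Definition \ref{subgamma}(4)), at which point no uniqueness hypothesis is needed.
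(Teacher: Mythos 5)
Your proof is correct and is essentially the paper's own argument: the unique witness $\br b$ for $\phi(a)$ must coincide with the witness inside $\gamma(M)$ supplied by the high formula $\phi^\gamma$ (Rem.\,\ref{subgammarem}(3)), for each high $\gamma$, hence $\br b$ lies in $\ulm(M)$. The extra remark about where uniqueness is needed matches the paper's surrounding discussion as well, so there is nothing to add.
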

\begin{proof}
 Let $\br b$ be a witness, i.e., $a\in \psi(M, \br b)$. By uniqueness of $\br b$, it suffices to verify  that there is a witness in each high $\gamma(M)$. But this is clear, since $a$ satisfies the (high) formula $\phi^\gamma$. 
\end{proof}

\begin{rem}
The formula $\exists y ( rx \dot= y)$ ($\sim x\eq x$) trivially has uniqueness of witnesses, which shows, once again, that $\ulm(M)$ is a submodule of $M$. A less trivial example is the formula $\exists y ( ry \dot= x)$ (i.e., $r|x$) for  nonzero $r$ and $M$, a torsionfree module over a domain. This leads to the following.
\end{rem}

\begin{prop}  Every torsionfree module over a two-sided Ore (= $(\flat\sharp)$) domain has injective first Ulm submodule and therefore Ulm length $\leq 1$.

Moreover, if $M$ is such a module, then $M = \ulm(M) \oplus M_{r}$, with $M_{r}$ a red\,$\ulm$ced module, and 
 $\ulm(M)=\infulm(M)$, the largest injective submodule of $M$.
\end{prop}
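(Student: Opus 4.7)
The plan is to leverage Lemma \ref{uniquewitness} together with van de Water's result \ref{vandeWater}(1). Over a two-sided Ore domain, every nonzero $r\in R^0$ yields a high divisibility formula $r\,|\,x$, and in a torsionfree module $M$ this formula has unique witnesses: if $ry = x = ry'$ with $r\neq 0$, then $r(y-y')=0$ forces $y=y'$. So Lemma \ref{uniquewitness} gives $\ulm(M)\subseteq r\,\ulm(M)$ for every $r\in R^0$, i.e.\ $r\,\ulm(M) = \ulm(M)$. In other words, $\ulm(M)$ is divisible (in the classical sense, since $R$ is a domain). Being a submodule of the torsionfree module $M$, it is also torsionfree, hence injective by \ref{vandeWater}(1).

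Injectivity of $\ulm(M)$ immediately yields Ulm length $\le 1$: by Rem.\,\ref{UlmFlatSharp}(1), $\ulm(\ulm(M))=\ulm(M)$, so the Ulm sequence of $M$ stabilizes at step $1$ and $\infulm(M)=\ulm(M)$. Injectivity also splits off $\ulm(M)$ as a direct summand, giving $M = \ulm(M)\oplus M_r$ for some complement $M_r$.

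To see $M_r$ is red$\ulm$ced, invoke Rem.\,\ref{UlmFlatSharp}(6) that $\ulm$ commutes with direct sums. As subfunctors of the identity on $M=\ulm(M)\oplus M_r$, this reads
\[
\ulm(M) \;=\; \ulm(\ulm(M))\,\oplus\,\ulm(M_r) \;=\; \ulm(M)\,\oplus\,\ulm(M_r),
\]
forcing $\ulm(M_r)=0$. Finally, any injective submodule $N\subseteq M$ is absolutely pure, hence has Ulm length $0$ by Rem.\,\ref{UlmFlatSharp}(1), so $N=\ulm(N)\subseteq \ulm(M)$ by the monotonicity in Rem.\,\ref{UlmFlatSharp}(2); thus $\ulm(M)$ is the \emph{largest} injective submodule of $M$.

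The only nontrivial step is the first one: recognizing that the obstruction to iterating $\ulm$ past level $1$ (pushing witnesses into $\ulm(M)$) disappears here because torsionfreeness turns each high divisibility formula into one with unique witnesses, so Lemma \ref{uniquewitness} applies to the generating family of $\Phi_{hi}$ supplied by Cor.\,\ref{highfilterFlatSharp}(2). Once divisibility of $\ulm(M)$ is in hand, van de Water's theorem does the rest essentially for free.
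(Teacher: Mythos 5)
Your proof is correct and follows essentially the paper's own argument: unique witnesses for the high divisibility formulas $r|x$ in a torsionfree module plus Lemma \ref{uniquewitness} give $r\,\ulm(M)=\ulm(M)$, then van de Water yields injectivity, and commutation of $\ulm$ with $\oplus$ handles the decomposition and reducedness of the complement. The only (harmless) difference is cosmetic: you obtain divisibility of $\ulm(M)$ directly and recover its Ulm length $0$ from injectivity afterwards, whereas the paper first concludes Ulm length $0$ via Cor.\,\ref{highfilterFlatSharp}(2) and Rem.\,\ref{UlmFlatSharp} and then cites Rem.\,\ref{Ulm0}(8) for divisibility.
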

\begin{proof}
By Cor.\ref{highfilterFlatSharp}(2) (and Thm.\,\ref{Ore}), every high formula is above a nontrivial divisibility formula, which clearly has unique  witnesses in a torsionfree module over a domain. By Lemma \ref{uniquewitness}, $\ulm(M)\subseteq r\ulm(M)
$ for every nonzero $r\in R$, hence $\ulm(M)\subseteq \ulm(\ulm(M))$  (Rem.\,\ref{UlmFlatSharp}(7)). Thus $\ulm(M)$ has Ulm length $0$ and is therefore divisible, Rem.\,\ref{Ulm0}(8). But it is also torsionfree, hence injective by Lemma \ref{vandeWater}(1).

As $\ulm(M)$ has Ulm length $0$ and $\ulm$ commutes with $\oplus$, the first Ulm submodule of the complement must be $0$, hence contains no nonzero nontrivial injective, Rem.\,\ref{UlmFlatSharp}(4).
\end{proof}

Note that torsionfreeness was essential in the proof: even for commutative domains, divisible implies injective only (and precisely) if the domain is Dedekind \cite[Cor.3.24, p.73]{LMR}.

\subsection{Pure injective modules}
It is well known that the first Ulm subgroup of a pure-injective abelian group is divisible, hence of Ulm length $0$, \cite[Exercise 6.1(1)]{F}.
    We generalize both the result and the proof to arbitrary rings, expanding an  idea of \cite[\S 3.3]{Mac}. 
   
\begin{thm}\label{length1} 
 The first Ulm submodule of a pure injective module is a pure injective module of Ulm length 0. 
 
Consequently, pure injective modules have Ulm length $\leq 1$.
\end{thm}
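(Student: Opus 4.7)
The plan is to establish both assertions via a single mechanism: the pp-compactness characterization of pure injective modules (every pp type over $M$ finitely satisfiable in $M$ is realized in $M$). Let $M$ be pure injective and set $U:=\ulm(M)=\bigcap_{\gamma\in\Gamma}\gamma(M)$, where $\Gamma$ is the filter of high unary pp formulas.

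To see $U$ is pure injective, take a pp type $p(\br z)$ with parameters in $U$ finitely satisfiable in $U$ and consider its enlargement
\[
p'(\br z):=p(\br z)\cup\{\br\gamma(\br z):\gamma\in\Gamma\}.
\]
A finite subset of $p'$ consists of finitely many formulas of $p$ together with finitely many $\br\gamma_i(\br z)$; the hypothesis yields $\br c\in U$ realizing the $p$-part, and since $U\subseteq\gamma(M)$ for each high $\gamma$, this same $\br c$ automatically satisfies every $\br\gamma_i(\br z)$. Hence $p'$ is finitely satisfiable in $M$, and by pure injectivity of $M$ realized by some tuple in $M$; its every component satisfies every high formula, so it lies in $U$, realizing $p$ there.

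For the Ulm length, fix $a\in U$ and a high formula $\phi(x)=\exists\br y\,\psi(x,\br y)$ with $\psi$ quantifier-free; we must find witnesses for $\phi(a)$ inside $U$. Form the pp type in $\br y$
\[
\Gamma_\phi(a,\br y):=\{\psi(a,\br y)\wedge\br\gamma(\br y):\gamma\in\Gamma\}.
\]
A finite subset is dominated by a single formula $\psi(a,\br y)\wedge\br\gamma(\br y)$ in which $\gamma$ is a conjunction of high formulas, still high. By Rem.\,\ref{subgammarem}(3), $\phi^\gamma$ is again high, so $a\in U$ forces $\phi^\gamma(a)$ in $M$, yielding $\br b\in\gamma(M)$ with $\psi(a,\br b)$. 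Thus $\Gamma_\phi(a,\br y)$ is finitely satisfiable in $M$ and, by pure injectivity, realized by some $\br b\in M$ whose components lie in every $\gamma(M)$, hence in $U$. Since $\psi$ is quantifier-free, $\psi(a,\br b)$ remains true in the submodule $U$, so $a\in\phi(U)$. As $a$ and $\phi$ were arbitrary, $\phi(U)=U$ for every high $\phi$, i.e.\ $\ulm(U)=U$, so $U$ has Ulm length $0$. The consequence $\ulm^2(M)=\ulm(U)=U=\ulm(M)$ then gives Ulm length $\le 1$ for $M$.

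The main conceptual point, and essentially the only obstacle, is noticing that adjoining the set $\{\br\gamma(\br z):\gamma\in\Gamma\}$ to an otherwise ordinary pp type is what forces realizations into $U$, and that $\phi^\gamma$ remains high (Rem.\,\ref{subgammarem}(3), ultimately because $\gamma(E)=E$ on every injective $E$) is what secures the finite-satisfiability check. Everything else reduces to standard pp-compactness of pure injectivity.
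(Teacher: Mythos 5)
Your second half---realizing the witness type $\Gamma_\phi(a,\br y)$ to get $\ulm(U)=U$---is exactly the paper's argument and is correct; note that it alone already yields the final clause, since $\ulm^2(M)=\ulm(U)=U=\ulm(M)$. The gap is in your proof that $U$ is pure injective. Your enlarged type $p'=p\cup\{\br\gamma(\br z):\gamma\in\Gamma\}$ constrains only the free variables, not the existential witnesses of the formulas $\phi\in p$. A realization $\br c$ of $p'$ in $M$ therefore lies in $U$, but is only known to satisfy each $\phi\in p$ \emph{in} $M$; what the pure injectivity criterion for $U$ demands is $\br c\in\phi(U)$, i.e.\ witnesses inside $U$. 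Your closing clause ``so it lies in $U$, realizing $p$ there'' silently uses $U\cap\phi(M)=\phi(U)$ for arbitrary pp formulas $\phi$, i.e.\ purity of $U$ in $M$---which by Prop.\,\ref{uphi} is equivalent to $U$ being a direct summand of $M$, a much stronger conclusion that is not available here (it is precisely the extra condition $(\ulm_\phi)$ the paper studies separately). Nor can the defect be patched afterwards by your second-half technique: for a general, non-high $\phi\in p$, knowing only $\br c\in U\cap\phi(M)$ gives no finite satisfiability of $\{\psi(\br c,\br y)\wedge\br\gamma(\br y):\gamma\in\Gamma\}$, since there is no reason witnesses for $\phi(\br c)$ can be found in $\gamma(M)$.

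The repair is the paper's device, and your hypothesis is strong enough for it: since $p$ is finitely satisfied \emph{in} $U$, not just by tuples from $U$, the witnesses of each $\phi\in p$ can also be chosen in $U\subseteq\gamma(M)$, so each formula $\phi^\gamma$ (free variables \emph{and} witnesses constrained to $\gamma$, Def.\,\ref{subgamma}(3)) is satisfied in $M$. Hence the type $p^\Gamma=\{\phi^\gamma:\phi\in p,\ \gamma\in\Gamma\}$ is finitely satisfied in $M$ (conjunctions of high formulas being high, Rem.\,\ref{subgammarem}(2)), it is realized in $M$ by pure injectivity, the realization lies in $U$, and one further compactness step pushes the witnesses into $U$ as in Rem.\,\ref{subgammarem}(5), giving a realization of $p$ in $U$ itself. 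In short, you under-used the hypothesis (finite satisfiability in $U$ controls witnesses too) and correspondingly constrained too little in the type you realized.
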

\begin{proof} Suppose $N$ is pure injective. Set $U=\ulm(N)$.

To prove $U$ is pure injective, consider an arbitrary $1$-type $p$ of pp formulas with parameters from $U$ that is finitely satisfied in $U$. We have  to show that $p$ is realized in $U$, \cite[Lemma 4.2.1]{P2}.
 
 To this end, set $p^\Gamma = \{\phi^\gamma : \phi\in p, \gamma\in\Gamma\}$. It is not hard to see that it suffices  to show that $p^\Gamma$ is realized in $N$ (as then any realization, as well as the corresponding witnesses, automatically end up in $U$). 
 
 Since $N$ is pure injective, it remains to verify that $p^\Gamma$ is finitely satisfied in $N$. Rem.\ref{subgammarem}(2b) shows that it suffices 
   to show that   every single formula  $\phi^\gamma\in p^\Gamma$ is satisfied in $N$.  
 But $p$ was finitely satisfied \emph{in} $U$, which means that the realization as well as the witnesses for every $\phi\in p$ can be chosen in $U$. In particular, 
  the formula $\phi^\gamma$ is satisfied in $U$ (or equivalently, in $N$),  for every  $\gamma\in\Gamma$, which completes the proof that $U$ is pure injective.
  
To prove $U=\ulm(U)$, i.e., that $U$ has Ulm length $0$, we have to verify that every element $u\in U$ not only satisfies all high formulas in $N$ (which it does, by definition), but that there are witnesses for that truth inside $U$ itself, so that $u$ satisfies all high formulas inside $U$. To this end, let  $\phi= \exists \br y \psi(x, \br y)$ be a high formula with $\psi$ quantifier-free. By Rem.\ref{subgammarem}(5),  it suffices to realize the type $\Gamma_\phi(u, \br y)$ in $N$, for which, by pure injectivity, it suffices to finitely realize it in $N$, hence,  by Rem.\ref{subgammarem}(1) now, to realize every single formula $\psi_\gamma(u, \br y)\in\Gamma_\phi(u, \br y)$  in $N$. This, however, is the same as to say that $u$ satisfies  $\phi^\gamma$ in $N$, which is clear, as $\phi^\gamma$ is high, Rem.\ref{subgammarem}(3). 
\end{proof}


\begin{cor}\label{elemequ} Every module is elementarily equivalent to, in fact an elementary submodule of, a module of Ulm length $\leq 1$. 
\end{cor}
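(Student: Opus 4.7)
The plan is to reduce the corollary to Thm.\,\ref{length1} by producing a pure injective elementary extension $M^{*}\succeq M$ of the given module.

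For the construction, I would invoke the standard model-theoretic fact that any sufficiently saturated (or special, in the sense of Chang--Keisler) elementary extension of $M$ is pure injective. This follows from the pp-characterization of pure injectivity (\cite[Lemma 4.2.1]{P2}, already used in the proof of Thm.\,\ref{length1}): a module is pure injective iff every pp $1$-type over it that is finitely satisfied is realized, and sufficient saturation is exactly what delivers such realizations. Concretely, one may build $M^{*}$ as an elementary chain $M=M_{0}\preceq M_{1}\preceq\cdots$ in which each $M_{n+1}$ realizes all pp $1$-types over $M_{n}$ that are finitely satisfied in $M_{n}$, with appropriate cardinal bookkeeping, or simply cite the existence of a special model of $\Th(M)$ of sufficiently large cardinality.

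Having picked such $M^{*}$, Thm.\,\ref{length1} gives $\ulm(\ulm(M^{*}))=\ulm(M^{*})$, i.e., $M^{*}$ has Ulm length $\leq 1$; combined with $M\preceq M^{*}$ this yields both assertions of the corollary at once (elementary equivalence being a trivial consequence of the elementary embedding). The main obstacle is essentially nonexistent: the substance sits entirely in Thm.\,\ref{length1}, and the passage to a pure injective elementary extension is routine. The only mild subtlety is that pure injectivity of $M^{*}$ requires saturation of $M^{*}$ \emph{over itself}, not merely over $M$; this is handled by the iteration just mentioned, or by invoking the existence of special/saturated models of $\Th(M)$ in sufficiently large cardinality.
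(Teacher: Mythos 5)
Correct, and essentially the paper's intended argument: the whole content is to pass to a pure injective elementary extension of $M$ and apply Thm.\,\ref{length1}, and your sufficiently saturated (or special) extension does exactly this — the shortest standard citation is Sabbagh's theorem that every module is an elementary submodule of its pure-injective hull, but that is only a cosmetic difference. One caution on your sketch: an elementary chain of length $\omega$ as written need not have pure injective union (the finitely many-reduced, but still up to $|R|+\aleph_0$ many, parameters of a pp-type over the union can be cofinal in the chain), so the ``cardinal bookkeeping'' must mean iterating to an ordinal of cofinality greater than $|R|+\aleph_0$ — or just rely on the saturated/special extension, which settles it outright.
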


\begin{cor}\label{decRD}
 If $N$ is a  pure injective module over an RD domain, then $\ulm(N)$ is injective and\/ 
  $N = \ulm(N) \oplus N_{r}$, with\/ $N_{r}$ a red\,$\ulm$ced submodule, and\/ 
 $\ulm(N)=\infulm(N)$,  the largest injective submodule of $M$.
\end{cor}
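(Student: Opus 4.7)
The plan is to combine Theorem \ref{length1} with the RD-specific fact from Remark \ref{Ulm0}(11). Set $U := \ulm(N)$. By Theorem \ref{length1}, $U$ is pure injective of Ulm length $0$, and $N$ itself has Ulm length at most $1$, so the Ulm sequence of $N$ stabilizes after one step, giving $\ulm(N) = \infulm(N)$. Remark \ref{Ulm0}(11) then upgrades `pure injective of Ulm length $0$ over an RD domain' to outright injectivity of $U$.

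Being injective, $U$ splits off in $N$: $N = U \oplus N_r$ for some complement $N_r$. To see that $N_r$ is red\,$\ulm$ced, use that $\ulm$ commutes with direct sums (Remark \ref{UlmFlatSharp}(6)) and that $\ulm(U) = U$ (since $U$ has Ulm length $0$):
\[
U = \ulm(N) = \ulm(U) \oplus \ulm(N_r) = U \oplus \ulm(N_r).
\]
Read as an identity of submodules of $N$ with $U \cap N_r = 0$, this forces $\ulm(N_r) = 0$. For the largest-injective claim, any injective submodule $I \subseteq N$ is absolutely pure, hence of Ulm length $0$ (Remark \ref{UlmFlatSharp}(1)), i.e.\ $\ulm(I) = I$; functoriality on submodules (Remark \ref{UlmFlatSharp}(2)) then gives $I = \ulm(I) \subseteq \ulm(N) = U$, so $U$ contains every injective submodule.

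There is really no obstacle here: the substance lies in Theorem \ref{length1}, the RD-domain hypothesis enters purely through Remark \ref{Ulm0}(11) (whose proof in turn rests on Rem.\,\ref{Ulm0}(9), Prop.\,2.4.16 of \cite{P2}), and the splitting and maximality assertions are immediate from the behavior of $\ulm$ under direct sums and submodules. The corollary is thus an assembly rather than a new argument.
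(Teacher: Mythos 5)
Your proposal is correct and follows essentially the same route as the paper: Theorem \ref{length1} gives that $\ulm(N)$ is pure injective of Ulm length $0$, Remark \ref{Ulm0}(11) upgrades this to injectivity (hence the splitting), and the facts that $\ulm$ commutes with direct sums and that Ulm length $0$ submodules are contained in $\ulm(N)$ give red\,$\ulm$cedness of the complement and maximality. Your explicit argument for the largest-injective claim via $I=\ulm(I)\subseteq\ulm(N)$ is just a slightly more spelled-out version of the paper's appeal to Rem.\,\ref{UlmFlatSharp}(3)--(4).
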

\begin{proof}
 By the theorem, $\ulm(N)$ is pure injective of Ulm length $0$, hence injective by Rem.\,\ref{Ulm0}(11). As $\ulm$ commutes with direct summand, $\ulm(N_{r})$ must be $0$, hence $N_r$ contains no nonzero nontrivial injective, Rem.\,\ref{UlmFlatSharp}(4).
\end{proof}

\begin{quest}
 What can be said if the RD ring is no longer a domain?
\end{quest}

{\small The  theorem  has little to do with modules. Inspection of the argument shows that it is true in the following form for any first order language $L$.

\begin{thm}\label{generalL}
Let $\Gamma\subseteq L_1$ be a set of unary $L$-formulas. Let $\ulm$ be the operator of (infinitary) conjunction of formulas from $\Gamma$, i.e., $\ulm(M) = \bigcap_{\gamma\in\Gamma}\gamma(M)$ for every $L$-structure $M$.
 
  Suppose $\Gamma\subseteq L_1$ has the following closure properties.
  \begin{enumerate}[\rm (a)]
 \item $\Gamma$ is closed under conjunction.
 \item $\Gamma$ is closed under $(\cdot)^\gamma$, i.e., for all $\phi, \gamma \in \Gamma$ we have $\phi^\gamma\in\Gamma$.
 \item $\ulm(M)$ is a substructure of $M$ for every $L$-structure $M$. (This is true if $L$ is purely relational. In modules with $\Gamma$, the high formulas, this is the  case, because, on the one hand, pp formulas define subgroups, and, on the other, high formulas are closed under substituting (the unary operation of) scalar multiplication.)
\end{enumerate}

If $N$ is an algebraically (= atomic) compact (= pure-injective) $L$-structure,  then $U = \ulm(N)$ is algebraically compact and $U = \ulm(U)$.\qed
\end{thm}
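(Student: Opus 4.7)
The plan is to transcribe the proof of Thm.~\ref{length1} essentially verbatim, with the closure hypotheses (a)--(c) doing exactly what closure of pp formulas under conjunction, closure under the operation $(\cdot)^\gamma$, and the fact that pp subgroups are subgroups did in the modular setting. I work throughout with existential $L$-formulas $\phi(\bar x) = \exists \bar y \, \psi(\bar x, \bar y)$ with $\psi$ quantifier-free, so that $\phi^\gamma$ and $\psi_\gamma$ of Def.~\ref{subgamma} are available and the formal identities of Rem.~\ref{subgammarem}(1)--(3) apply; by (c), $U := \ulm(N)$ is a substructure of $N$, so types with parameters in $U$ are meaningful.

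For algebraic compactness of $U$, given a $1$-type $p(x)$ with parameters from $U$ that is finitely satisfied in $U$, form $p^\Gamma := \{\phi^\gamma : \phi \in p,\, \gamma \in \Gamma\}$. Any realization of $p^\Gamma$ in $N$ has both the realization and its witnesses lying in every $\gamma(N)$, hence in $U$, so $p$ will then be realized in $U$; it thus suffices to show $p^\Gamma$ is finitely satisfied in $N$. Closing under conjunction, a finite subset $\{\phi_i^{\gamma_j}\}$ of $p^\Gamma$ is implied by a single $\phi^\gamma$, where $\phi := \bigwedge_i \phi_i$ is (equivalent to an element) in $p$ and, via (a), $\gamma := \bigwedge_j \gamma_j \in \Gamma$; the implication uses Rem.~\ref{subgammarem}(2). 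Finite satisfiability of $p$ in $U$ furnishes $a \in U$ realizing $\phi$ with witnesses $\bar b \in U$; since every element of $U$ satisfies every formula from $\Gamma$, both $a$ and $\bar b$ satisfy $\gamma$, so $a$ satisfies $\phi^\gamma$ in $N$. Algebraic compactness of $N$ then realizes $p^\Gamma$ in $N$, as required.

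For $\ulm(U) = U$, fix $u \in U$ and $\phi = \exists \bar y \, \psi(x,\bar y) \in \Gamma$; I claim $\phi(u)$ can be witnessed inside $U$. Consider the partial type $\Gamma_\phi(u, \bar y)$. Any finite subfamily $\psi_{\gamma_1}, \ldots, \psi_{\gamma_k}$ conjoins to $\psi_\gamma(u, \bar y)$ with $\gamma := \bigwedge_i \gamma_i \in \Gamma$ (by (a)), and this is satisfiable for $\bar y$ in $N$ precisely when $u \in \phi^\gamma(N)$, which holds since $\phi^\gamma \in \Gamma$ by (b) and $u \in U$. Thus $\Gamma_\phi(u, \bar y)$ is finitely satisfied in $N$; algebraic compactness yields a realization $\bar b \in N$ whose components lie in every $\gamma(N)$ and hence in $U$. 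So $\phi(u)$ is witnessed inside $U$, and as $\phi \in \Gamma$ was arbitrary, $u \in \ulm(U)$.

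The only non-mechanical issue is fixing the right notion of ``algebraically compact'' for general $L$-structures, namely realization of finitely satisfied existential $1$-types with parameters (this is what pure-injectivity amounts to for modules, and what is needed here), and then checking that the formal syntactic manipulations used above survive outside the modular setting. Both are handled by (a), (b), (c) and the purely formal content of Rem.~\ref{subgammarem}, so no substantive new work beyond the module proof is required.
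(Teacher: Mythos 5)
Your proposal is correct and is essentially the paper's own proof: the paper proves Thm.\,\ref{generalL} precisely by the instruction to inspect the proof of Thm.\,\ref{length1}, and you transcribe that argument, with hypothesis (a) supplying closure of $\Gamma$ under conjunction (replacing ``conjunctions of high formulas are high''), (b) supplying $\phi^\gamma\in\Gamma$ (replacing Rem.\,\ref{subgammarem}(3)), and (c) making $U$ a substructure so that types over $U$ and realization in $U$ make sense. The only caveat, which the paper shares, is that ``algebraically compact'' must be read as realization of finitely satisfied positive primitive (not arbitrary existential) types in any number of variables, and the parenthetical that witnesses ``automatically end up in $U$'' hides the same small $\Gamma_\phi$-type compactness step in both versions.
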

}

 \subsection{When things are pure} The fact that $U = \ulm(U)$ is some weak kind of purity of $U$ in $N$, namely, existential closedness of $U$ in $N$ w.r.t.\ high formulas (with witnesses inside $U$). We next look at when $U$ is fully pure in $N$ (i.e., existentially closed w.r.t.\ \emph{all}\/ pp formulas). 


\begin{definition} [Notation]
 Consider the condition

\hspace{2em}   $(\ulm_\phi)$\hspace{3em} $U\cap \phi(N) \subseteq \phi^\gamma(N)$,  for every  $\gamma\in\Gamma$. 
\end{definition}

\begin{rem}\label{finiteconj}
 \begin{enumerate}[\rm (1)] 
 \item The set of unary pp formulas $\phi$ for which  $(\ulm_\phi)$ holds is closed under finite conjunction,  {\rm Rem.\,\ref{subgammarem}(2a)}.
 \item  $(\ulm_\phi)$ holds for every high formula $\phi$. Moreover, $U\subseteq\phi^\gamma(N)$ for every $\gamma\in\Gamma$,  {\rm Rem.\ref{subgammarem}(3).}
 \item  $(\ulm_\phi)$ holds for every quantifier-free unary pp formula $\phi$,  {\rm Rem.\ref{subgammarem}(6)}.
 \item  Consequently, $(\ulm_\phi)$ holds whenever  $\phi$ is a  finite conjunction of high formulas and quantifier-free formulas.
 \end{enumerate}
\end{rem}
 
 \begin{prop}\label{uphi} 
 
 \noindent 
If  $N$ is   pure injective, the following are equivalent.
\begin{enumerate}[\rm (i)] 
\item $U$ is pure in $N$.
  \item $U$ is a direct summand of $N$.
 \item  $(\ulm_\phi)$ holds for every unary pp formula $\phi$.
  \item  $(\ulm_\phi)$ holds for every unary formula $\phi$ not in $\Gamma$. 
\end{enumerate}
\end{prop}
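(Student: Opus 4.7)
The plan is first to dispose of the easy pieces and reduce everything to the single non-trivial implication (iii)$\Rightarrow$(i). The equivalence (iii)$\iff$(iv) is immediate from Rem.\,\ref{finiteconj}(2), which gives $(\ulm_\phi)$ for free whenever $\phi\in\Gamma$. The equivalence (i)$\iff$(ii) follows from Thm.\,\ref{length1}: $U=\ulm(N)$ is itself pure injective, and a pure injective pure submodule of any module splits off (so pure $\iff$ direct summand here).

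For (i)$\Rightarrow$(iii), suppose $U$ is pure in $N$ and take $a\in U\cap\phi(N)$ with $\phi=\exists\br y\,\psi(x,\br y)$, $\psi$ quantifier-free. Purity gives $a\in\phi(U)$, so there is a tuple $\br b$ in $U$ with $\psi(a,\br b)$. Since every component of $\br b$ lies in $U$, it satisfies every high $\gamma$, so $\psi_\gamma(a,\br b)$ holds in $N$ for every $\gamma\in\Gamma$; equivalently $a\in\phi^\gamma(N)$.

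The substance is (iii)$\Rightarrow$(i). Given $a\in U\cap\phi(N)$ with $\phi=\exists\br y\,\psi(x,\br y)$, I want to produce witnesses $\br b$ inside $U$. The natural device is the partial type $\Gamma_\phi(a,\br y)$ of Def.\,\ref{subgamma}(4). By Rem.\,\ref{subgammarem}(1b) this set is closed under finite conjunction up to $\sim$, so finite satisfiability in $N$ reduces to the satisfiability of each single $\psi_\gamma(a,\br y)$ in $N$; but that is precisely the hypothesis $a\in\phi^\gamma(N)$ supplied by $(\ulm_\phi)$ (the conjunct $\br\gamma(a)$ is free of charge because $a\in U\subseteq\gamma(N)$). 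Since $\Gamma_\phi(a,\br y)$ is a pp type with parameters and $N$ is pure injective, it is realized in $N$ by some tuple $\br b$. By construction $\br b\in\gamma(N)$ for every $\gamma\in\Gamma$, hence $\br b\in U$, and $\psi(a,\br b)$ witnesses $a\in\phi(U)$, proving purity.

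The only genuine care point is making sure that the type $\Gamma_\phi(a,\br y)$ is an actual pp type closed under finite conjunction, so that pure injectivity of $N$ may legitimately be applied; this is exactly the conjunction law Rem.\,\ref{subgammarem}(1b), together with the fact that the conjunction of two high formulas is high, so the argument goes through unchanged from the analogous step in the proof of Thm.\,\ref{length1}.
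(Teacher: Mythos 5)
Your proposal is correct and follows essentially the same route as the paper's proof: (i)$\iff$(ii) via pure injectivity of $U$ from Thm.\,\ref{length1}, (i)$\iff$(iii) by realizing the type $\Gamma_\phi(a,\br y)$ in the pure injective $N$ after reducing finite satisfiability to single formulas via $\psi_\gamma\wedge\psi_\delta\sim\psi_{\gamma\wedge\delta}$, and (iv)$\Rightarrow$(iii) because $(\ulm_\phi)$ is automatic for $\phi\in\Gamma$. The only cosmetic difference is that you split (i)$\iff$(iii) into two explicit directions where the paper states it as one biconditional chain.
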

 \begin{proof}
By Thm.\ref{length1}, $U$ is pure injective. Thus the first equivalence follows from the definition of pure injectivity.
For the second, first of all  keep in mind that for purity in modules,  unary pp formulas suffice \cite[Prop.2.1.6]{P2}. 

 It is not hard to see that $U$ is pure in $N$ if and only if, for every unary pp formula $\phi$ and every $u\in \phi(N)$,  the type $\Gamma_\phi(u, \br y)$ of Def.\,\ref{subgamma}(4)  is realized in $N$. But $\Gamma_\phi(u, \br y)$ is realized in the pure injective module $N$ if and only if it is  finitely satisfied in $N$.  As conjunctions of high formulas are high and $\psi_\gamma\wedge \psi_\delta$ is equivalent to $\psi_{\gamma\wedge\delta}$, this is equivalent to every single $\psi(u, \br y)\in \Gamma_\phi$ being satisfiable in $N$, which, in turn, is equivalent to $\ulm_\phi$.  In other words, $U$ is pure in (a pure injective) $N$ iff $\ulm_\phi$ holds for all unary pp formulas $\phi$. 

Finally, it is clear from  (the end of the proof of) Thm.\ref{length1} that 
$(\ulm_\phi)$ holds for all high formulas $\phi$. Hence (iv) suffices, which completes the proof.
\end{proof}

{\small 
\begin{rem}
Similarly to Thm.\ref{length1}, this is true mutatis mutandis (direct summand to be replaced by retract) for arbitrary languages $L$ and sets\/ $\Gamma$ as in {\rm Thm.\,\ref{generalL}}, with one proviso: if purity can't be reduced to unary pp formulas in $L$-structures, one has to allow $\phi$ of \emph{all} arities in the conditions $(\ulm_\phi)$.
\end{rem}
}

\begin{cor}\label{pidecomp} Suppose  $N$ is a  pure injective module. 



Then the following are equivalent.
\begin{enumerate}[\rm (i)] 
\item $\ulm(N)$ is pure in $N$.
  \item $N = \ulm(N) \oplus N_{r}$ with $N_{r}$ a red\,$\ulm$ced pure injective module.
  \item $N = N_0 \oplus N_{r}$ with $N_0$ of Ulm length $0$ and  $N_{r}$ a red\,$\ulm$ced pure injective module.
  \item  $(\ulm_\phi)$ holds for every bounded formula $\phi$.

\end{enumerate}
\end{cor}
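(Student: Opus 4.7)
The plan is to reduce everything to Proposition \ref{uphi} together with Theorem \ref{length1} and the fact (Rem.\,\ref{UlmFlatSharp}(6)) that $\ulm$ commutes with direct sums. Set $U=\ulm(N)$ throughout.

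First I would establish (i)$\iff$(ii). One direction is trivial since direct summands are pure. For (i)$\Rightarrow$(ii): Prop.\,\ref{uphi}, (i)$\Leftrightarrow$(ii), already gives that $U$ is a direct summand, so write $N=U\oplus N_r$. The complement $N_r$ is pure injective as a summand of the pure injective $N$. To see that $N_r$ is red\,$\ulm$ced, apply $\ulm$ to the decomposition: since $\ulm$ commutes with $\oplus$, $U=\ulm(N)=\ulm(U)\oplus\ulm(N_r)$. By Thm.\,\ref{length1}, $U$ has Ulm length $0$, so $\ulm(U)=U$, whence $\ulm(N_r)=0$.

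Next (ii)$\Rightarrow$(iii) is immediate with $N_0:=\ulm(N)$, which has Ulm length $0$ by Thm.\,\ref{length1}. For (iii)$\Rightarrow$(i): given the decomposition, commutativity of $\ulm$ with $\oplus$ yields $\ulm(N)=\ulm(N_0)\oplus\ulm(N_r)=N_0\oplus 0=N_0$, so $U$ equals the direct summand $N_0$ of $N$ and is therefore pure.

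Finally (i)$\iff$(iv) follows directly from Prop.\,\ref{uphi}, (i)$\Leftrightarrow$(iv), combined with Dichotomy I: the condition ``$\phi\notin\Gamma$'' appearing in Prop.\,\ref{uphi}(iv) means $\phi$ is not high, and by Dichotomy I this is precisely ``$\phi$ is bounded.'' Hence the two conditions coincide verbatim.

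No step looks seriously obstructed; the only subtlety is keeping track of the bookkeeping that $\ulm(\ulm(N))=\ulm(N)$ (furnished by Thm.\,\ref{length1}) in order to conclude $\ulm(N_r)=0$ in the step (i)$\Rightarrow$(ii).
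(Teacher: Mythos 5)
Your proposal is correct and follows essentially the same route as the paper: (i)$\iff$(iv) via Prop.\,\ref{uphi} plus Dichotomy I (not high $=$ bounded), the splitting $N=\ulm(N)\oplus N_r$ from Prop.\,\ref{uphi}, reducedness of $N_r$ from $\ulm$ commuting with $\oplus$, Ulm length $0$ of $\ulm(N)$ from Thm.\,\ref{length1}, and the computation $\ulm(N)=\ulm(N_0)\oplus\ulm(N_r)=N_0$ for (iii)$\Rightarrow$(i). The bookkeeping you flag is handled exactly as in the paper's appeal to Cor.\,\ref{decRD}, so there is nothing to add.
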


\begin{proof} Invoking Dichotomy I, the lemma shows (i)$\iff$(iv) and these imply that $N$ can be written as $N = \ulm(N) \oplus N_{r}$. The rest of (ii) follows as in Cor.\,\ref{decRD}. Thus (i)$\implies$(ii) (whose converse is trivial). By the theorem, (ii)$\implies$(iii).
%
For the converse, let $N = N_0 \oplus N_{r}$ as in (iii). Then 
$\ulm(N) = \ulm(N_0)  \oplus \ulm(N_{r}) = \ulm(N_0)  \oplus 0=\ulm(N_0)=N_0$, which  concludes the proof.
\end{proof}

Prop.\,\ref{uphi} and Rem.\,\ref{finiteconj} yield at once

\begin{cor}\label{finiteconjdec}
 Suppose $R$ is a  ring over which  (all bounded formulas are quantifier-free or, more generally) every unary pp formula is equivalent to a finite conjunction of high formulas and quantifier-free formulas.
 
 Then every pure injective $R$-module $N$  decomposes as
$N_0 \oplus N_{r}$ with $N_0$ a (pure injective) module of Ulm length $0$  and  $N_{r}$ a red\,$\ulm$ced pure injective module.
\end{cor}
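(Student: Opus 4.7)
The plan is to reduce the statement directly to Corollary \ref{pidecomp} by verifying its condition (iv), namely that the assertion $(\ulm_\phi)$ holds for every bounded unary pp formula $\phi$. Once this is established, Corollary \ref{pidecomp} produces the desired decomposition, with $N_0 = \ulm(N)$ of Ulm length $0$ (by Theorem \ref{length1}) and $N_r$ reduced pure injective.

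To verify $(\ulm_\phi)$ for every bounded formula, I would appeal directly to Remark \ref{finiteconj}. Part (1) says that the class of unary pp formulas for which $(\ulm_\phi)$ holds is closed under finite conjunction; parts (2) and (3) say that this class contains every high formula and every quantifier-free pp formula, respectively. Combining these three observations (this is part (4) of the same remark) gives $(\ulm_\phi)$ for every finite conjunction of high and quantifier-free unary pp formulas. By the hypothesis of the corollary, every unary pp formula over $R$ is equivalent to such a conjunction; in particular, every bounded formula is. Hence condition (iv) of Corollary \ref{pidecomp} is satisfied.

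The main (in fact only) non-bookkeeping step is to note that the hypothesis on $R$ is exactly what is needed to upgrade Remark \ref{finiteconj}(4)---which a priori applies only to the special formulas of the indicated syntactic form---to a statement about \emph{all} unary pp formulas, and therefore in particular about all bounded ones. The parenthetical strengthening in the hypothesis (``all bounded formulas are quantifier-free'') is a special case: there quantifier-free formulas alone already suffice for $(\ulm_\phi)$ by Remark \ref{finiteconj}(3), so one does not even need the closure under conjunction with high formulas. There is no real obstacle here; all the substantive work was carried out in Theorem \ref{length1} and Corollary \ref{pidecomp}, and this corollary is a clean syntactic consequence.
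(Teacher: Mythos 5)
Your proposal is correct and follows essentially the paper's own route: the paper derives the corollary "at once" from Prop.~\ref{uphi} and Rem.~\ref{finiteconj}, and your use of Cor.~\ref{pidecomp}(iv) (which is itself a consequence of Prop.~\ref{uphi}) together with Rem.~\ref{finiteconj}(4) and the hypothesis on $R$ is the same argument. The bookkeeping about identifying a formula with an equivalent conjunction of high and quantifier-free formulas is handled exactly as the paper intends (formulas are identified with their equivalence classes), so nothing is missing.
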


\begin{rem} We already know that in pure injective modules over RD domains the first Ulm submodule splits off, see {\rm Cor.\,\ref{decRD}}. The proof was based on {\rm Rem.\,\ref{Ulm0}(9)}, the lucky fact that  it was pure, even absolutely.

 Another proof  is this:  recall from \S\ref{1} that every unary pp formula  is equivalent to a finite conjunction of formulas of the form $r|sx$ where $r, s \in R$. By\/  {\rm Cor.\,\ref{RDhi-lo}(1)}, these are  high or quantifier-free.
\end{rem}

\begin{quest}
 Is there a reasonable class of rings or modules for which some version of Ulm's Theorem holds?
\end{quest}

\begin{quest}
 Over what rings are Ulm submodules of cotorsion modules cotorsion, resp., are their Ulm factors pure injective?
\end{quest}

This is true in abelian groups, see \cite[Prop.8.3, p.291]{F}. (Here cotorsion is \emph{not} in the sense of \cite{MR},\footnote{Sorry, Alex!} but in the sense of Enochs that every submodule with  flat cokernel splits.)

{\footnotesize \subsection{An entertaining exercise}

Of course,  a pure injective pure submodule is a direct summand. But it may be an entertaining exercise to write down a pp type whose realization in $U=\ulm(N)$ directly yields a retract from $N$ to $U$. 

\emph{Solution.} Suppose $N$ is pure injective and $U = \ulm(N)$ . Consider  the pp type of $N\setminus U$ over $U$ in variables $\{x_a : a\in N\setminus U\}$. This type is finitely satisfied in $N$ (and thus realized in the pure injective $N$) when $U$ is pure in $N$.

Here are the details.  Consider $\at(N)$, the atomic type of $N$ in the variables $\{ x_a : a\in N \}$, i.e., the set of all atomic formulas (=homogeneous linear equations) $\alpha(x_{a_1}, \ldots, x_{a_n})$ that are true of $(a_1, \ldots, a_n)$ in $N$.  
So a realization of the type $\at(N)$ is a sequence $\langle b_a : a\in N \rangle$ such that $(b_{a_1}, \ldots, b_{a_n})$ satisfies $\alpha(x_{a_1}, \ldots, x_{a_n})$.  As pure injective = atomically compact = algebraically compact, an atomic type has a realization in $N$, whenever every finite subtype (=subset) does (cf.\   \cite{P1} or \cite{P2} for modules, or \cite{H} for arbitrary $L$-structures).
 
 Form $\at(N/U)$ by replacing, in (formulas from) $\at(N)$, all  entries of $x_u$, where $u\in U$, by $u$ itself and adding all equations of the form $x_u \eq u$ for all $u\in U$. Clearly, $\at(N/U)$ is equivalent to the set of all atomic formulas with parameters from $U$ (=inhomogeneous linear equations over $U$) that are true of their respective elements in $N$. In other words, a realization of $\at(N/U)$ is a realization $\langle b_a : a\in N \rangle$ of $\at(N)$ in which $b_u = u$ for every $u\in U$.
 
 Set  $\Phi := \at(N/U) \cup \bigcup_{a\in N, \gamma\in\Gamma} \gamma(x_a)$. Any realization of $\Phi$ in $N$ lies entirely in $U$. Moreover, as every $u\in U$ is the only element of $N$ satisfying the formula $x_u \eq u \in \at(N/U)$, 
  any such realization exhausts all of $U$. More importantly yet, given any realization $\langle b_a : a\in N \rangle$ of $\Phi$, the assignment $a\to b_a$ defines a retract $h$ from $N$ to $U$, i.e.,  a homomorphism $h: N \to U$ that is the identity on $U$ (that is, rather a retract of the identical inclusion of $U$ into $N$), for $b_u$ must be $u$ for all $u\in U$.
  Note that any such retract is a direct summand, hence a pure submodule and itself a pure injective module.

It remains to verify that every finite subset $\Psi$ of $\Phi$ is satisfied in $N$. Any such $\Psi$ consists of finitely many atomic formulas with parameters from $U$ (including formulas of the form $x_u \eq u$) and high formulas $\gamma_j(x_b)$ for some $b\in N$. Let $\br x$ be the tuple of all free variables and $\br u$ the tuple of all parameters  from $U$ occurring in $\Psi$. Consider $\alpha(\br x, \br u)$, the conjunction of all  formulas in $\at(N/U)\cap \Psi$, and $\psi(\br x; \br u)$, the conjunction of all of $\Psi$ (i.e., including those high $\gamma_j$'s). 

If $\br x = (x_{a_1}, \ldots, x_{a_n})$, the tuple $(a_1, \ldots, a_n)$ satisfies $\alpha(\br x, \br u)$ in $N$, hence $\exists\br x\,\alpha(\br x; \br u)$ is satisfied by $\br u$ in $N$. Purity of $U$ in $N$ implies that it is so in $U$. Consequently, all witnesses satisfy all high formulas and, in particular, also $\psi(\br x; \br u)$  in $N$, as desired.\qed}

\section{Conclusion or: A cobounded functor?}\label{concl}
Consider some 
extreme properties of a ring.

\begin{enumerate}[\rm (a)]
\item All high formulas are equivalent to $x \eq x$.
\item  All low formulas  are equivalent to $x \eq 0$. 
\item All cobounded formulas are equivalent to $x \eq x$.
\item  All bounded formulas  are equivalent to $x \eq 0$.
\end{enumerate}

By the duality lemma, the first on the left is equivalent to the second on the right. The same holds for the third and the fourth.
What can one say about rings with the latter two? More generally, is it worth the while to introduce Ulm's twin brother $\twinulm$, a co-Ulm functor that is based on cobounded formulas in place of high ones?

Clearly, over $(\flat\sharp)$ domains, $\twinulm=\ulm$, simply because high=cobounded. Over general domains, the modules of co-Ulm length $0$  are exactly the divisible modules, while $\Psi_{bdd}$ is simply the classical torsion radical. (Cf.\  \cite{MR???} for a comparison of this with the injective torsion radical, which is  the one given by $\Psi_{low}$.)

The ring extreme, $\twinulm = 1$ (or `all modules have co-Ulm length 0'), is certainly not very interesting. It happens only 
 over division rings. Namely, if everything satisfies every cobounded formula, then everything is divisible, so every ring element is a unit.

\end{document}